\def\mid{|}
\newcommand{\eqref}[1]{(\ref{#1})}
\def\emptyset{\varnothing}
\def\d{\delta}
\def\e{\varepsilon}
\def\L{\Lambda}
\newcommand{\R}     {\mathbb{R}}
\newcommand{\Z}     {\mathbb{Z}}
\newcommand{\N}     {\mathbb{N}}
\renewcommand{\P}   {\mathbb{P}}
\def\1{\mathbh{1}}
\newcommand{\ssup}[1]{{({#1})}}
\newtheorem{theorem}{Theorem}[section]
\newtheorem{lemma}[theorem]{Lemma}
\newtheorem{prop}[theorem] {Proposition}
\newtheorem{cor}[theorem] {Corollary}
\renewcommand{\d}{\mathrm{d}}
\newcommand{\eps}{\varepsilon}
\newcommand{\Sym}{\mathfrak{S}}
\newcommand{\supp}{\operatorname{supp}}
\newcommand{\const}{\operatorname{cst.}}
\newcommand{\Ccal}   {{\mathcal C }}
\newcommand{\Dcal}   {{\mathcal D }}
\newcommand{\Mcal}   {{\mathcal M }}
\newcommand{\Ncal}   {{\mathcal N }}
\newcommand{\Ocal}   {{\mathcal O }}
\renewcommand{\e}   {\mathrm{e}}
\newcommand{\dd}{\mathrm{d}} 
\newcommand{\vect}[1]{\mathbf{#1}} 
\newcommand{\vectt}[1]{\bolds{#1}}
\newcommand{\cl}{\mathrm{cl}} 
\newcommand{\ideal}{\mathrm{ideal}} 
\newcommand{\Int}{\operatorname{Int}}
\newcommand{\dom}{\operatorname{dom}}
\begin{document}
\begin{frontmatter}

\title{Large deviations for cluster size distributions in a~continuous classical many-body system\thanksref{T1}}
\runtitle{Large deviations for cluster size distributions}
\thankstext{T1}{Supported by the
DFG-Forscher\-gruppe~718 ``Analysis and Stochastics in Complex
Physical Systems.''}

\begin{aug}
\author[A]{\fnms{Sabine}~\snm{Jansen}\ead[label=e1]{sabine.jansen@ruhr-uni-bochum.de}},
\author[B]{\fnms{Wolfgang}~\snm{K{\"o}nig}\corref{}\ead[label=e2]{koenig@wias-berlin.de}}
\and
\author[C]{\fnms{Bernd}~\snm{Metzger}}
\runauthor{S. Jansen, W. K{\"o}nig and B. Metzger}
\affiliation{Ruhr-Universit\"at Bochum, WIAS Berlin and TU Berlin, and WIAS Berlin}
\address[A]{S. Jansen\\
Faculty for Mathematics\\
Ruhr-Universit\"at Bochum\\
Universit\"atsstr. 150\\
44780 Bochum\\
Germany\\
\printead{e1}} 
\address[B]{W. K{\"o}nig\\
Weierstrass Institute Berlin\\
Mohrenstr.~39\\
10117 Berlin\\
Germany\\
and\\
Institute for Mathematics\\
Technische Universit\"at Berlin\\
Str.~des 17.~Juni 136\\
10623 Berlin\\
Germany\\
\printead{e2}}
\address[C]{B. Metzger\\
Weierstrass Institute Berlin\\
Mohrenstr.~39\\
10117 Berlin\\
Germany}
\end{aug}

\received{\smonth{6} \syear{2013}}
\revised{\smonth{1} \syear{2014}}

%
\begin{abstract}
An interesting problem in statistical physics is the condensation of
classical particles in droplets or clusters when the
pair-interaction is given by a stable Lennard--Jones-type potential. We
study two aspects of this problem. We start by deriving a large
deviations principle for
the cluster size distribution for any inverse temperature $\beta\in
(0,\infty)$ and
particle density $\rho\in(0,\rho_{\mathrm{cp}})$ in the thermodynamic
limit. Here $\rho_{\mathrm{cp}} >0$ is the close packing density. While in
general the rate function is an abstract object, our second main result
is the $\Gamma$-convergence of the rate function toward an explicit
limiting rate function in the low-temperature dilute limit $\beta\to
\infty$, $\rho\downarrow0$ such that $-\beta^{-1}\log\rho\to\nu$
for some $\nu\in(0,\infty)$. The limiting rate function and its
minimisers appeared in recent work,
where the temperature and the particle density were coupled with the
particle number.
In the decoupled limit considered here, we prove that just one cluster
size is dominant,
depending on the parameter $\nu$. Under additional assumptions on the potential,
the $\Gamma$-convergence along curves can be strengthened to uniform
bounds, valid in a low-temperature,
low-density rectangle.
\end{abstract}

%
\begin{keyword}[class=AMS]
\kwd[Primary ]{82B21}
\kwd[; secondary ]{60F10}
\kwd{60K35}
\kwd{82B31}
\kwd{82B05}
\end{keyword}
\begin{keyword}
\kwd{Classical particle system}
\kwd{canonical ensemble}
\kwd{equilibrium statistical mechanics}
\kwd{dilute system}
\kwd{large deviations}
\end{keyword}
\end{frontmatter}

\section{Introduction}\label{sec-Intro}
We consider interacting $N$-particle systems in a box $\Lambda
=[0,L]^d \subset\R^d$ with interaction energy
%
%
\begin{equation}
\label{Udef} U_N(x_1,\ldots, x_N):= \sum
_{1\leq i<j\leq N} v\bigl(|x_i-x_j|\bigr),
\end{equation}
where $v\dvtx[0,\infty) \to\R\cup\{\infty\}$ is a pair potential of
Lennard--Jones type; see Figure~\ref{lennard-jones-potential}. That is:
\begin{itemize}
\item it is large close to zero, inducing a repulsion that prevents the
particles from clumping;
\item it has a nondegenerate negative part, inducing an attraction,
that is, particles try to assume a certain fixed distance to each other;
\item it vanishes at infinity; that is, long-range effects are absent.
\end{itemize}
Additionally, we always assume that~$v$ is stable and has compact
support. We allow for the possibility that $v=\infty$ in some interval
$[0,r_{\mathrm{ hc}}]$ to represent hard core interaction. See Assumption \ref{assV}
in Section~\ref{sec-results} below for details.

%
\begin{figure}

\includegraphics{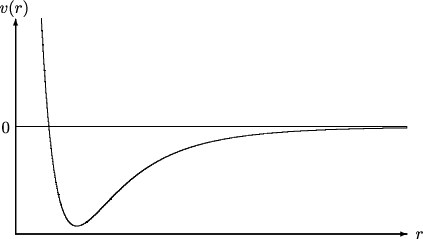}

\caption{The pair potential $v(r)=1.5 r^{-12}-5r^{-6}$ of
Lennard--Jones type.}\label{lennard-jones-potential}
\end{figure}

%
\begin{figure}[b]

\includegraphics{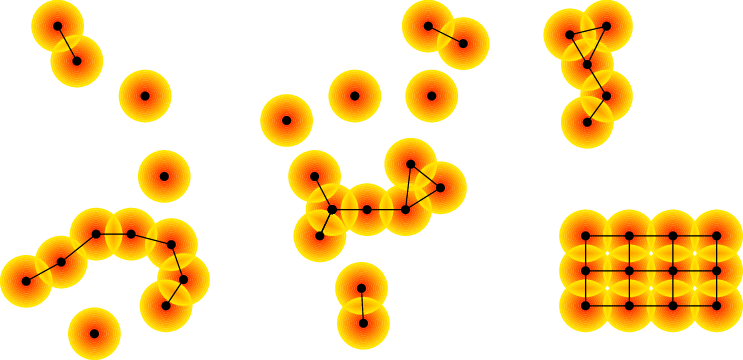}

\caption{A schematic figure illustrating the cluster decomposition of
a particle configuration and the induced graph structure.}\label{figure-2}
\end{figure}

A particle configuration $\vect{x}=(x_1,\ldots,x_N)$ in the box is
randomly structured into a number of smaller
subconfigurations, that is, well-separated smaller groups, which we
call \textit{clusters}; see Figure \ref{figure-2}.
One of our main questions is about the joint distribution of the
cluster sizes, that is, their cardinalities.
Intuitively, if the box size is large in comparison to the particle
number, then one expects many small clusters,
and if it is small, then one expects few large ones. We will analyse
this question more closely in the
thermodynamic limit, that is, keeping $\beta\in(0,\infty)$ fixed and taking
%
%
\begin{equation}
\label{thermolimit} N\to\infty,  L=L_N\to\infty \mbox{ such that}\qquad
\frac{N}{L_N^d}\to\rho,
\end{equation}
for some fixed \textit{particle density} $\rho\in(0,\infty)$, followed by
the dilute low-tempe\-ra\-tu\-re limit
%
%
\begin{equation}
\label{dilutelimit} \beta\to\infty, \rho\downarrow0 \mbox{ such that}\qquad{-}
\frac{1}\beta \log\rho\to\nu,
\end{equation}
for some $\nu\in(0,\infty)$. In this regime, the total entropy of the
system is well approximated by the sum of the
entropies of the clusters, and the excluded-volume effect between the
clusters as well as
the mixing entropy may be neglected. As a consequence, particles tend
to favor one optimal
cluster size, which depends on $\nu$ and may be infinite.

In recent work \cite{CKMS10}, the free energy was analysed in the
\textit{coupled} dilute low-temperature limit
%
%
\begin{eqnarray}
\label{Ndilutelimit} &&N\to\infty, \beta=\beta_N\to\infty, L=L_N
\to\infty \mbox{ such that}
\nonumber
\\[-8pt]
\\[-8pt]
\nonumber
&&\qquad-\frac{1}{\beta_N}\log\frac{N}{L_N^d}\to\nu,
\end{eqnarray}
with some constant $\nu\in(0,\infty)$. It was found that the limiting
free energy is a piecewise
linear, continuous function of $\nu$ with at least one kink, that is,
nondifferentiable point. Furthermore,
there was a phenomenological discussion of the interplay between the
limiting cluster distribution and
the kinks in the limiting free energy, on base of a variational
representation. See Section~\ref{sec-dilutelimit}
for details.

In the present paper, we go beyond \cite{CKMS10} by considering the
physically relevant
setting of a thermodynamic limit and by proving limit laws for the
quantities of interest. That
is, our two main purposes are:
\begin{longlist}[(ii)]
\item[(i)] to derive, for fixed $\beta, \rho\in(0,\infty)$, a large deviations
principle for the cluster size distribution in the thermodynamic limit
in \eqref{thermolimit}, and
\item[(ii)] to derive afterwards limit laws (laws of large numbers) for
the cluster size distribution in the
low-temperature dilute limit in \eqref{dilutelimit}.
\end{longlist}
In this way, we decouple the limit in
\eqref{Ndilutelimit} into taking two separate limits, and we prove
limit laws for the cluster
sizes in this regime.

The organisation of Section~\ref{sec-Intro} is as follows. In
Section~\ref{sec-model} we introduce our model and define the
thermodynamic set-up. Our main result concerning the large deviations
principle for the cluster size distribution is formulated in
Section~\ref{sec-results}. The low-temperature dilute limit is
discussed in Sections~\ref{sec-dilutelimit} and \ref{sec-Limitlaws}.
Adopting additional, stronger assumptions we give in Section~\ref
{sec-Uniform} bounds that are uniform in the temperature for dilute
systems. Finally we discuss in Section~\ref{sec-physics} some
mathematical and physical problems related to our results.

\subsection{The model and its thermodynamic set-up}\label{sec-model}
Here are our assumptions on the pair interaction potential that will be
in force throughout the paper.

{\renewcommand{\theass}{(V)}
\begin{ass}\label{assV}
The function $v\dvtx[0,\infty)
\to\R\cup\{\infty\}$ satisfies the following:
\begin{longlist}[(1)]
\item[(1)] $v$ is finite except possibly for a hard core: there is a
$r_\mathrm{hc}\geq0$
such that $v\equiv\infty$ on $(0,r_\mathrm{hc})$ and $v<\infty$ on
$(r_\mathrm{hc},\infty)$.
\item[(2)] $v$ is stable, that is, $ U_N(\vect{x})/N$ is bounded from
below in $N\in\N$ and $\vect{x}\in(\R^d)^N$.
\item[(3)] The support of $v$ is compact, more precisely, $b:=\sup
\supp(v)$ is finite.
\item[(4)] $v$ has an attractive tail: there is a $\delta\in(0,b)$
such that $v(r)<0$ for all
$r \in(b-\delta,b)$.
\item[(5)] $v$ is continuous in $[r_\mathrm{hc},\infty)$.
\end{longlist}
\end{ass}}

%
\begin{figure}[b]

\includegraphics{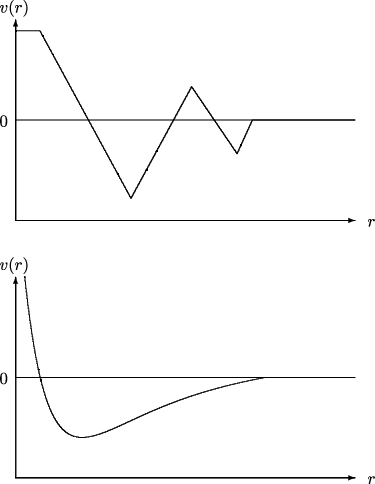}

\caption{Two examples of pair interaction potentials satisfying
Assumption \protect\ref{assV}.}
\label{Figure1}
\end{figure}

See Figure \ref{Figure1} for two examples.
Assumption \ref{assV} differs from Assumption (V) in \cite{CKMS10} in two
points: here we drop the requirement $v(r_\mathrm{hc})=\infty$, and
stability was there a consequence of some cumbersome additional assumption.

We introduce the Gibbs measure induced by the energy defined in \eqref
{Udef}. For $\beta\in(0,\infty)$, $N\in\N$ and a box $\Lambda\subset\R
^d$, we define the probability measure $\P_{\beta,\Lambda}^\ssup{N}$ on
$\Lambda^N$ by the Lebesgue density
%
%
\begin{equation}
\label{Pdef} \P_{\beta,\Lambda}^\ssup{N}(\d\vect{x})=\frac{1}{Z_\Lambda(\beta,N)N!}
\e^{ - \beta U_N(\vect{x})} \,\dd\vect{x},\qquad \vect{x}\in\Lambda^N,
\end{equation}
where
\[
Z_\Lambda(\beta,N):= \frac{1}{N!} \int_{\Lambda^N}
\e^{-\beta U_N(\vect
{x})} \,\dd\vect{x}
\]
is the canonical partition function at inverse temperature $\beta$.

We introduce the notions of connectedness and clusters. Fix $R\in
(b,\infty)$. Given $\vect{x}=(x_1,\ldots,x_N)\in(\R^d)^N$, we introduce
on the set $\{x_1,\ldots,x_N\}$ a graph structure by connecting two
points if their distance is $\leq \!R$. In this way, the notion of
$R$-connectedness is naturally introduced, which we also call just
connectedness. The connected components are also called \textit{clusters}.
A cluster of cardinality $k\in\N$ is called a $k$-\textit{cluster}. By
$N_k(\vect{x})$ we denote the number of $k$-clusters in $\vect{x}$, and by
\[
\rho_{k,\Lambda}(\vect{x}):= \frac{N_{k}(\vect{x})}{|\Lambda|}
\]
the $k$-\textit{cluster density}, the number of $k$-clusters per unit
volume. We consider the \textit{cluster size distribution}
%
%
\begin{equation}
\label{rhodef} \vectt{\rho}_\Lambda:= ( \rho_{k,\Lambda}
)_{k\in\N}
\end{equation}
as an $M_{N/|\Lambda|}$-valued random variable, where
%
%
\begin{equation}
\label{Mrhodef} M_{\rho}:= \biggl\{ (\rho_k)_{k\in\N}
\in[0,\infty)^\N \Big| \sum_{k\in\N} k
\rho_k \leq\rho \biggr\}, \qquad \rho\in(0,\infty).
\end{equation}
On $M_\rho$ we consider the topology of pointwise convergence, in which
it is compact. Note that for each finite $N$ and any box $\Lambda
\subset\R^d$,
\[
\sum_{k=1}^N k \rho_{k,\Lambda} (
\vect{x}) = \frac{N}{|\Lambda|},\qquad \vect{x}\in\Lambda^N.
\]
However, some mass of $\vectt{\rho}_\Lambda$ may be lost in the limit
$N\to\infty$ to infinitely large clusters. The distribution of $\vectt
{\rho}_\Lambda$ under the Gibbs measure $\P_{\beta,\Lambda}^\ssup{N}$
is the main object of our study.



Introduce the free energy per unit volume as
\[
f_\Lambda\biggl(\beta,{\frac{N}{|\Lambda|}} \biggr):= - \frac{1}{\beta|\Lambda
|}
\log Z_\Lambda(\beta,N).
\]
It is known \cite{Ru99} that the free energy per unit volume in the
thermodynamic limit,
%
%
\begin{equation}
\label{freeenergy} f(\beta,\rho):= \mathop{\lim_{N,L \to\infty}}
_{N/L^d \to\rho} f_{[0,L]^d}
\biggl(\beta,{\frac{N}{L^d}}\biggr),
\end{equation}
exists in $\R$ for all $\rho>0$ when there is no hard core, that is, if
$r_\mathrm{hc} =0$. When $r_\mathrm{hc} >0$, there is a threshold $\rho
_\mathrm{cp}>0$, the \textit{close packing density}, such that the limit
exists and is finite for $\rho\in(0,\rho_\mathrm{cp})$, and is $\infty
$ for
$\rho>\rho_\mathrm{cp}$. Since we are interested in dilute systems,
that is, small $\rho$, we will always
assume that $\rho\in(0,\rho_\mathrm{cp})$.

\subsection{Large deviations for cluster distribution under the Gibbs measure}\label{sec-results}
Our first main result is a large deviations principle (LDP)
for the cluster size distribution under the Gibbs measure. For the
concept of large deviations principles, see the monograph \cite{DZ98}.

%
\begin{theorem}[(Large deviation principle with convex rate
function)]\label{thm:ldp}
Fix $\beta\in(0,\infty)$ and $\rho\in(0,\rho_\mathrm{cp})$. Then,
in the thermodynamic limit $N\to\infty$, $L\to\infty$, $N/L^d \to
\rho$, the distribution of $\vectt{\rho}_\Lambda$ under $\P_{\beta,\Lambda}^\ssup{N}$ with $\Lambda=[0,L]^d$
satisfies a large deviations principle on $M_{\rho+\eps}$ with speed
$|\Lambda|=L^d$, where $\eps>0$ is such that $N/L^d \leq\rho+\eps$.
The rate function
$J_{\beta,\rho}\dvtx M_{\rho+\eps} \to[0,\infty]$ is convex, and its
effective domain
$\{ J_{\beta,\rho}(\cdot) <\infty\}$ is contained in $M_\rho$. For
$\rho$ sufficiently small, $\{ J_{\beta,\rho}(\cdot) <\infty\}$ is
equal to $M_\rho$.
\end{theorem}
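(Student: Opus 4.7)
The plan is to prove the LDP by a thermodynamic subadditivity argument, exploiting that by Assumption~(V) and the choice $R>b=\sup\supp(v)$, two $R$-clusters at mutual distance $>R$ do not interact. The starting point is the cluster decomposition: any $\vect{x}\in\Lambda^N$ is partitioned by its $R$-connected components, and because all inter-cluster pair distances exceed $R$, the potential energy splits as a sum over clusters. Hence the canonical partition function $Z_\Lambda(\beta,N)$ decomposes as a sum over cluster-size vectors $(N_k)_{k\in\N}$ with $\sum_k kN_k=N$, each term being a combinatorial factor times products of single-cluster integrals times the indicator that clusters are pairwise at distance $>R$.

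The core step is to show convergence, with convex limit, of the finite-volume rate functions
\begin{equation*}
J_\Lambda^\ssup{N}(\vect{\rho}):=-\frac{1}{|\Lambda|}\log \P_{\beta,\Lambda}^\ssup{N}\!\bigl(\vect{\rho}_\Lambda\in U\bigr),
\end{equation*}
as the neighborhood $U$ of $\vect{\rho}\in M_\rho$ shrinks. Partition a large box $\Lambda$ of side $L=m(\ell+R)$ into $m^d$ subboxes $\Lambda_i$ of side $\ell$, each surrounded by a corridor of width $R$. Independent placements in the subboxes produce a configuration in $\Lambda$ whose cluster structure is the disjoint union of the individual structures, since corridors block inter-box connections. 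Selecting in each $\Lambda_i$ a configuration of $N_i$ particles with cluster density in a smaller neighborhood $U'$ yields
\begin{equation*}
\P_{\beta,\Lambda}^\ssup{N}\!\bigl(\vect{\rho}_\Lambda\in U\bigr)\ge \frac{\prod_{i=1}^{m^d}\P_{\beta,\Lambda_i}^\ssup{N_i}\!\bigl(\vect{\rho}_{\Lambda_i}\in U'\bigr)\,Z_{\Lambda_i}(\beta,N_i)}{Z_\Lambda(\beta,N)}.
\end{equation*}
Taking $-\log$, dividing by $|\Lambda|$ and sending $m\to\infty$ with $\ell$ fixed, the Gibbs-normalization ratio has a finite limit by the existence of $f(\beta,\rho)$ in \eqref{freeenergy}, giving subadditivity of the candidate rate function. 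Convexity $J_{\beta,\rho}(\lambda\vect{\rho}_1+(1-\lambda)\vect{\rho}_2)\le\lambda J_{\beta,\rho}(\vect{\rho}_1)+(1-\lambda)J_{\beta,\rho}(\vect{\rho}_2)$ follows by filling fractions $\lambda m^d$ and $(1-\lambda)m^d$ of subboxes with configurations near $\vect{\rho}_1$ and $\vect{\rho}_2$ respectively.

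Given the existence and convexity of the limit $J_{\beta,\rho}$, the full LDP follows by standard arguments. Exponential tightness is automatic since $\vect{\rho}_\Lambda$ takes values in the compact space $M_{\rho+\eps}$. The lower bound for open sets is exactly the construction above. The upper bound for closed sets comes from an analogous estimate on the cluster-restricted partition function — summing over all compatible size-vectors and bounding each weight by its maximum — combined with a finite covering argument exploiting compactness. For the effective domain, the identity $\sum_k k\rho_{k,\Lambda}(\vect{x})=N/|\Lambda|$ together with Fatou's lemma for pointwise limits in $[0,\infty)^{\N}$ gives $\sum_k k\rho_k\le\rho$ whenever $J_{\beta,\rho}(\vect{\rho})<\infty$. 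For $\rho$ sufficiently small, every $\vect{\rho}\in M_\rho$ is achievable: at low density there is enough room in $\Lambda$ to arrange disjoint clusters of any prescribed size vector with pairwise separations $>R$, so $J_{\beta,\rho}$ is finite on all of $M_\rho$.

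The main obstacle is the joint bookkeeping of the combinatorial Gibbs weights, the corridor volumes of relative size $\sim dR\ell^{-1}$ that are negligible only after $\ell\to\infty$, and the possible loss of mass to infinite cluster sizes. The last point is especially delicate because the product topology on $M_{\rho+\eps}$ permits mass to escape: an LDP lower bound at some $\vect{\rho}$ with $\sum_k k\rho_k<\rho$ must be realized by configurations in which the residual density $\rho-\sum_k k\rho_k$ is carried by a vanishing number of very large clusters, and this requires quantitative control of the single-cluster integrals for unboundedly large cluster sizes — the regime in which Ruelle-type superstability bounds and the existence of $f(\beta,\rho)$ are crucial inputs.
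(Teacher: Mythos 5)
Your plan follows the paper's overall strategy — subadditivity via partitioning the box into subboxes separated by corridors of width $R$ — but you attempt to run it directly at the level of the full sequence $\vect{\rho}_\Lambda\in M_{\rho+\eps}$ by shrinking product-topology neighborhoods $U\searrow\{\vect{\rho}\}$, whereas the paper carefully restricts to finite projections $(\rho_{1,\Lambda},\ldots,\rho_{j,\Lambda})$, proves a genuine LDP for each fixed $j$ (Proposition~\ref{prop:ldp-j}), and then lifts to the infinite product via the Dawson--G\"artner projective-limit theorem. This is not a cosmetic difference; it hides the two main gaps in your argument.

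First, the step ``\emph{as the neighborhood $U$ shrinks, the finite-volume rate functions converge, with convex limit}'' is precisely the hard technical core, and you give no mechanism for it. Subadditivity along the special doubled-box sequences gives a dyadically-defined convex candidate; but one must then show this candidate extends continuously to the interior of its domain, that the same limit is attained along \emph{general} thermodynamic sequences (both $\liminf$ and $\limsup$), and that the boundary of the domain is handled so the resulting function is lower semi-continuous. The paper spends the entirety of Section~\ref{sec-proofsubaddi} on exactly this (the function $\eta_j$, Lemmas~\ref{lem-etadef}--\ref{lem-Deltaclcomp}), and it works only because the constraint is on finitely many cluster sizes at a time. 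In the product topology on $M_{\rho+\eps}$, a shrinking basic neighborhood $U$ only constrains finitely many coordinates anyway, so your ``direct'' approach secretly is a projection argument -- but without Dawson--G\"artner you would still need to show that the family of finite-dimensional rate functions is consistent and that the supremum over $j$ is a good rate function, which is exactly what the theorem supplies.

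Second, you correctly flag the loss of mass to infinitely large clusters as ``especially delicate'' and say it ``requires quantitative control of the single-cluster integrals for unboundedly large cluster sizes.'' But in the paper this problem does not arise in the proof of Theorem~\ref{thm:ldp} at all: by constraining only $N_1,\ldots,N_j$ in $Z_\Lambda(\beta,N,N_1,\ldots,N_j)$, all particles in clusters of size $>j$ are left free and simply absorbed into the normalization by $Z_\Lambda(\beta,N)$. You have raised a genuine obstacle to your own route and not resolved it; the paper's route avoids it by design. (The quantitative single-cluster bounds -- Lemma~\ref{lem:zka-lb}, Lemma~\ref{lem:cfe-lower-bound} -- enter later, in Sections~\ref{sec-boundsf} and \ref{sec-finftyfinite}, for the $\Gamma$-convergence theorems, not here.)

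Minor points: your upper bound (``summing over all compatible size-vectors and bounding each weight by its maximum'') is the right idea but needs the Hardy--Ramanujan estimate $\exp(O(\sqrt N))$ on the number of integer partitions of $N$ to conclude that the sum is $\e^{o(|\Lambda|)}$ times the maximum; and the claim that every $\vect{\rho}\in M_\rho$ is achievable at small $\rho$ needs a concrete separation construction (the paper's Lemma~\ref{lem:non-empty-int} with the threshold $\overline\rho(\delta)=(r_{\mathrm{hc}}+R+2\delta)^{-d}$). Finally, note that the paper carries the particle density $\rho$ as an extra coordinate in $(\rho,\rho_1,\ldots,\rho_j)$ and proves joint convexity of $f_j(\beta,\cdot)$; this is what lets one compare boxes with slightly mismatching densities when passing from special to general sequences (Lemma~\ref{lem-Delta}), a step your sketch omits.
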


If we impose $N/L^d \leq\rho$, the theorem also holds with $M_{\rho}$
instead of $M_{\rho+\eps}$.

The proof of Theorem~\ref{thm:ldp} is in Section~\ref{sec-ProofLDP}.
Define $f(\beta,\rho,\cdot)\dvtx M_\rho\to[0,\infty]$ through the equality
%
%
\begin{equation}
\label{eq:rf-fe} J_{\beta,\rho} (\vectt{\rho} )=: \beta \bigl( f(\beta,\rho,
\vectt{\rho}) - f(\beta,\rho) \bigr).
\end{equation}
Then the LDP may be rewritten, formally, as
\[
\frac{1}{N!} \int_{\Lambda^N} \e^{-\beta U_N(\vect{x})} \1 \bigl\{
\vectt{\rho}_\Lambda(\vect{x}) \approx\vectt{\rho} \bigr\} \,\dd\vect{x}
\approx\exp \bigl( - \beta|\Lambda| f(\beta,\rho,\vectt{\rho}) \bigr).
\]
Thus $f(\beta,\rho,\vectt{\rho})$ may be considered as the free energy associated
with the cluster size distribution $\vectt{\rho}_\Lambda$, thought of as
an order parameter.
The identity $\inf J_{\beta,\rho} =0$ translates into
\[
f(\beta,\rho) = \inf_{M_\rho} f(\beta,\rho,\cdot).
\]
In words: the (unconstrained) free energy is recovered as infimum of
the constrained free energy as the order parameter is varied, a
relation in the spirit of Landau theory.

It is a general fact from large deviations theory that an LDP implies
tightness. More specifically, the LDP of Theorem~\ref{thm:ldp} implies
a limit law for the cluster size distribution toward the set of
minimisers of the rate function. This is even a law of large numbers if
this set is a singleton. Hence, Theorem~\ref{thm:ldp} gives us control
on the limiting behaviour of the cluster size distribution under the
Gibbs measure in the thermodynamic limit. However, in the general
context of Theorem~\ref{thm:ldp}, we cannot offer any formula for the
rate function $J_{\beta,\rho}$. We have to restrict ourselves to the
low-temperature dilute limit \eqref{dilutelimit}. In this setting we
obtain explicit asymptotic formulae in Section~\ref{sec-dilutelimit}
below, and this is our second main result.

\subsection{The dilute low-temperature limit of the rate function}\label
{sec-dilutelimit}
In this section, we formulate and comment on our main result about the
limiting behaviour of the LDP rate function $J_{\beta,\rho}$ introduced
in Theorem~\ref{thm:ldp} and of its minimisers in the dilute
low-temperature limit in \eqref{dilutelimit}. This behaviour is
explicitly identified in terms of the \textit{ground-state energy} of $U_N$,
\[
E_N:= \inf_{\vect{x}\in(\R^d)^N} U_N(\vect{x}),\qquad N\in
\N.
\]
It can be seen as  in the proof of \cite{CKMS10}, Lemma~1.1, using
subadditivity that the limit
\[
e_\infty:= \lim_{N\to\infty} \frac{E_N}N \in(-
\infty,0)
\]
exists. It lies in the nature of the regime in \eqref{dilutelimit} that
it is not the cluster size distribution $\rho_k$ that will converge
toward an interesting limit (actually, these will vanish), but the
term $q_k=k\rho_k/\rho$, which carries the interpretation of frequency
of particles in $k$-clusters. Therefore, let
\[
\mathcal{Q}:= \biggl\{ \vect{q} = (q_k)_{k\in\N}
\in[0,1]^\N \Big| \sum_{k\in\N} q_k
\leq1 \biggr\}
\]
and introduce, for $\nu\in(0,\infty)$, the map $g_\nu\dvtx\mathcal
{Q} \to\R$ defined by
%
%
\begin{equation}
\label{gnudef} g_\nu (\vect{q} ):= \sum_{k\in\N}
q_k \frac{E_k-\nu}{k}+ \biggl( 1- \sum_{k\in\N}q_k
\biggr)e_\infty.
\end{equation}
Our second main result is the following.

%
\begin{theorem}[($\Gamma$-convergence of the rate function)]\label{thm:main}
Let $\nu\in(0,\infty)$. In the limit $\beta\to\infty$, $\rho\to0$
such that $-\beta^{-1} \log\rho\to\nu$, the function
\[
\mathcal{Q} \to\R\cup\{\infty\},\qquad \vect{q}= (q_k)_{k\in\N}
\mapsto\frac{1}\rho f \biggl(\beta,\rho, \biggl(\frac{\rho q_k}{k}
\biggr)_{k\in\N} \biggr)
\]
$\Gamma$-converges to $g_\nu$.
\end{theorem}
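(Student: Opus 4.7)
The plan is to establish the $\Gamma$-liminf and $\Gamma$-limsup inequalities separately, leaning on the LDP of Theorem~\ref{thm:ldp} together with low-temperature and low-density asymptotics for single-cluster quantities. The driving heuristic is that a near-ground-state $k$-cluster contributes energy $E_k$ and positional (mixing) entropy $\sim -\log\rho_k$, so that after dividing the free-energy density by $\beta\rho$ and using $-\beta^{-1}\log\rho\to\nu$, each cluster size $k$ with occupation probability $q_k$ contributes $q_k(E_k-\nu)/k$; particles whose cluster cardinality diverges with $|\Lambda|$ form a solid-like bulk contributing their per-particle ground-state energy $e_\infty$, accounting for the $(1-\sum_k q_k)e_\infty$ term of $g_\nu$.

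For the $\Gamma$-liminf, let $\vect{q}^{(n)}\to\vect{q}$ in $\mathcal{Q}$ along the coupled regime and set $\rho_k^{(n)}:=\rho_n q_k^{(n)}/k$. I would fix a truncation $K$ and bound $f(\beta_n,\rho_n,\vect{\rho}^{(n)})$ from below by a sum over $k\leq K$ of single-cluster contributions $\rho_k^{(n)}(E_k+\beta_n^{-1}\log\rho_k^{(n)}+o(1))$, derived from the Laplace asymptotics $\beta_n^{-1}\log Z_k(\beta_n)\to -E_k$ of the $k$-particle canonical partition function, plus a residual contribution for the mass $\rho_n-\sum_{k\leq K}k\rho_k^{(n)}$ not assigned to small clusters. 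By the results of \cite{CKMS10} (and their uniform counterparts obtained under Assumption~(V)), the per-particle free energy of this residual mass converges to $e_\infty$ in the dilute low-temperature regime. Taking $\liminf_{n\to\infty}$ after dividing by $\rho_n$, each truncated sum yields $\sum_{k\leq K}q_k(E_k-\nu)/k+(1-\sum_{k\leq K}q_k)e_\infty$; letting $K\to\infty$ and invoking dominated convergence (the summand is bounded in $k$ because $E_k/k\to e_\infty$ and $\sum_k q_k\leq 1$) recovers $g_\nu(\vect{q})$.

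For the $\Gamma$-limsup, I would take $\vect{q}^{(n)}=\vect{q}$ truncated at a slowly growing cut-off $K_n$ and construct a test configuration in $\Lambda_n$: place roughly $|\Lambda_n|\rho_n q_k/k$ near-ground-state $k$-cluster blocks for each $k\leq K_n$ on a widely spaced lattice so that, by compact support of $v$, the blocks do not interact, together with a single close-packed region at density close to $\rho_\mathrm{cp}$ absorbing the residual mass $(1-\sum_k q_k)\rho_n|\Lambda_n|$ with per-particle energy converging to $e_\infty$. A direct counting/Laplace estimate of the restricted partition function yields a lower bound of the form $\exp(-\beta_n|\Lambda_n|\rho_n(g_\nu(\vect{q})+o(1)))$, whence the LDP upper bound of Theorem~\ref{thm:ldp} gives $\limsup\rho_n^{-1}f(\beta_n,\rho_n,\vect{\rho}^{(n)})\leq g_\nu(\vect{q})$.

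The main obstacle is to manage the infinite-cluster contribution $(1-\sum_k q_k)e_\infty$, which is invisible in the pointwise topology on $\mathcal{Q}$ but enters crucially through the residual mass. Showing that this mass contributes precisely $e_\infty$ per particle requires a free-energy asymptotics uniform in a shrinking density range $\rho'\in[0,\rho_n]$ coupled to $\beta_n\to\infty$, a refinement of the coupled-limit analysis in \cite{CKMS10} that rests on Assumption~(V), in particular on stability and the compact support of $v$. A related technical point is that Theorem~\ref{thm:ldp} provides only an abstract variational rate function at fixed $(\beta,\rho)$; extracting the correct $\Gamma$-limit from it requires cluster-expansion-type bounds that are tight up to $o(\beta^{-1}\log\rho)$ corrections, so that only the ground-state energy and leading positional entropy survive the final rescaling by $\rho^{-1}$.
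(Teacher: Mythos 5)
Your overall strategy matches the paper's: compare $f$ with an ideal cluster free energy that drops excluded volume, replace cluster free energies by ground-state energies via $\beta^{-1}\log Z_k^{\cl}(\beta)\to -E_k$, use $-\beta^{-1}\log\rho\to\nu$ to extract the $(E_k-\nu)/k$ terms, and pass partition-function estimates through the LDP of Theorem~\ref{thm:ldp}. But two concrete points go astray.

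First, the recovery-sequence construction contains a genuine error: you propose to absorb the residual mass in ``a single close-packed region at density close to $\rho_{\rm cp}$'' with per-particle energy converging to $e_\infty$. That cannot work. The paper defines $\rho_{\rm cp}$ as the hard-core packing threshold beyond which the free energy is $+\infty$; near $\rho_{\rm cp}$ the energy per particle is forced up by the repulsive core, not down to $e_\infty$. The ground-state energy density $e_\infty=\lim_k E_k/k$ is realized at interparticle spacings near the attractive well of $v$, hence at a density typically much \emph{lower} than $\rho_{\rm cp}$. The paper instead places the residual mass as one big cluster in the whole box (or, in the constrained partition function, in a box of side $a_k$ with $a_k\geq kR$ large enough to contain the $k$-particle minimiser; Lemma~\ref{lem:fk-low-temp}). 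As written, your test configuration would not give the bound $\exp(-\beta|\Lambda|\rho(g_\nu(\vect q)+o(1)))$.

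Second, you flag as ``the main obstacle'' the need for a uniform free-energy asymptotics over a shrinking density range coupled to $\beta\to\infty$. This is more than $\Gamma$-convergence along curves requires, and the paper avoids it. For the liminf, the residual term already appears in the ideal free energy $f^{\rm ideal}$, and the only uniform-in-$k$ input is $f_k^{\cl}(\beta)\geq E_k/k-C/\beta$ (Lemma~\ref{lem:cfe-lower-bound}, a consequence of stability plus Cayley's bound on tree counts), combined with $E_k/k\to e_\infty$ and the continuity of $\vect q\mapsto\sum_k q_k(E_k-\nu)/k+(1-\sum_k q_k)e_\infty$ on $\mathcal Q$; no coupled-limit uniformity is needed. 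For the limsup, the mass that has leaked to infinity is handled by a diagonal argument: a recovery sequence for the zero vector is $q_k^{\ssup s}=\delta_{k,k(s)}$ with $k(s)\to\infty$ slowly enough that $f_{k(s)}^{\cl,a_{k(s)}}(\beta(s))\to e_\infty$, then decompose a general $\vect q$ as a convex combination of a probability vector and the zero vector and use convexity of $f(\beta,\rho,\cdot)$. Your proposed uniform lemma is essentially what the paper proves later under Assumptions~\ref{ass:hoelder}--\ref{ass:maxdist} for the stronger Theorem~\ref{thm:unif}; it is not needed here, and attempting to prove it under Assumption~(V) alone would likely fail (Assumption~\ref{ass:maxdist} is genuinely used there).
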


For the notion of $\Gamma$-convergence, see the monograph \cite{dMaso}.
Theorem~\ref{thm:main} is proved in Section~\ref{sec-proofThm14}. The
physical intuition is the following: at low density, the particle
system can be approximated by an \emph{ideal gas of clusters}; see~\cite
{hillbook},
Chapter~5 or~\cite{sator}. ``Ideal'' means that we
neglect the ``excluded volume,'' that is,
the constraint that clusters have mutual distance $\geq R$. As can be
seen from the proof of Lemma~\ref{prop:lb},
this means that the rate function $f(\beta,\rho,\cdot)$ is well
approximated by the ideal free energy
%
%
\begin{eqnarray}
\label{eq:fideal-def} f^\ideal\bigl(\beta,\rho,(\rho_k)_k
\bigr)&:=& \sum_{k\in\N} k \rho_k
f_k^\cl (\beta) + \biggl(\rho- \sum
_{k\in\N} k \rho_k \biggr) f_\infty^\cl(
\beta)
\nonumber
\\[-8pt]
\\[-8pt]
\nonumber
&&{}+ \frac{1}\beta\sum_{k\in\N}
\rho_k (\log\rho_k - 1).
\end{eqnarray}
Here $f_k^\cl(\beta)$ and $f_\infty^\cl(\beta)$ should be thought of as
free energies per particle
in clusters of size $k$ (resp., in infinitely large clusters); see
Section~\ref{sec-boundsf}
for the precise definitions. The functional $\rho g_\nu$ is obtained
from $f^\ideal$ by two simplifications, justified at low temperatures.
\begin{itemize}
\item First, we approximate cluster internal free energies by their
ground state energies.
\item Second, we split the entropic term as
\[
\frac{1}{\beta} \sum_{k \in\N} \rho_k (
\log\rho_k - 1) = \sum_{k\in\N
}
\rho_k \frac{\log\rho}{\beta} + \frac{1}\beta\sum
_{k \in\N} \rho_k \biggl(\log\frac{\rho_k}{\rho} - 1
\biggr)
\]
and keep only the first sum. Thus we keep the entropic contribution
coming from the ways to place
the clusters (their centers of gravity) in the box and discard the
mixing entropy.
\end{itemize}

Since these two simplifications suppress physical intuition to some
extent, it appears natural to further analyse the consequences of the
approximation with the above ideal free energy; this is carried out in
\cite{JK}. Interesting connections with well-known cluster expansions
are discussed in \cite{J}.

In classical statistical physics, the approach we take here goes under
the name of a geometric, or droplet,
picture of condensation~\cite{hillbook,sator}. This is closely related
to the well-known contour picture
of the Ising model and lattice gases~\cite{Ru99}. Lattice gas cluster
sizes have been studied, for example,
in~\cite{lebowitz}, continuous systems were investigated in~\cite
{murmann,zessin}. The focus of these works
was on parameter regions where only small clusters occur. Our declared
goal, in contrast, is
to derive bounds that cover both the small cluster and the large
cluster regimes (in the notation introduced
below, this means both $\nu> \nu^*$ and $\nu< \nu^*$).

Under additional assumptions on the pair potential, we can replace the
somewhat abstract $\Gamma$-convergence result with more concrete
uniform error bounds; see equations~\eqref{estiratefct} to~\eqref
{eq:largenu} in Theorem~\ref{thm:unif}.

The rate function $g_\nu$ appeared in \cite{CKMS10} in the description
of the behaviour of the partition function $Z_{\beta,\Lambda}^\ssup{N}$
in the coupled dilute low-temperature limit in \eqref{Ndilutelimit}.
More precisely, it was shown there that, in this limit, for any $\nu\in
(0,\infty)$,
\[
-\frac{1}{N\beta_N}\log Z_{\beta_N,\Lambda_N}^\ssup{N}\to\mu(\nu).
\]
It was phenemenologically discussed, but it was not given mathematical
substance to, the conjecture that the random variable $\vect{q}_{\Lambda
_N}=(k\rho_{k,\Lambda_N}/\rho)_{k\in\N}$ under $\P_{\beta_N,\Lambda
_N}^\ssup{N}$ with $\Lambda_N=[0,L_N]^d$ satisfies an LDP with speed
$N\beta_N$ and rate function given by $g_\nu(\cdot)-\mu(\nu)$. This
would be in line with Theorems~\ref{thm:ldp} and~\ref{thm:main},
and we do believe that this is indeed true, but we make no attempt to
prove this.


\subsection{Limit laws in the dilute low-temperature limit}\label{sec-Limitlaws}
The minimiser(s) of the rate function $f(\beta,\rho,\cdot)$ are of high
interest, since they describe the limiting behaviour of the cluster
size distribution under the Gibbs measure. It is a general fact from
the theory of $\Gamma$-limits that $\Gamma$-convergence implies the
convergence of minima over compact subsets and of the minimiser(s). For
the limiting rate function $g_\nu$, the global minimiser has been
identified in \cite{CKMS10}. The minimum is
%
%
\begin{equation}
\label{muident} \mu(\nu) = \inf_{\mathcal{Q}} g_\nu= \inf
_{N\in\N} \frac{E_N - \nu}{N},
\end{equation}
and the minimisers are given as follows.

\begin{lemma}[(Minimisers of $g_\nu$)] \label{12345678987456321}
The number $\nu^*:= \inf_{N\in\N} (E_N - N e_\infty)$ is strictly
positive. The map $\nu\mapsto\mu(\nu)$ is continuous, piecewise affine
and concave. Let $\mathcal{N} \subset(0,\infty)$ be the set of points
where $\mu(\cdot)$ changes its slope. Then $\Ncal$ is bounded, and $\mu
(\nu)=-\nu$ for $\nu>\max\Ncal$ and $\mu(\nu)=e_\infty$ for $\nu<\nu
^*$. Furthermore:

\begin{longlist}[(1)]

\item[(1)]$\nu^*\in\mathcal{N} \subset[\nu^*,\infty)$, and $\mathcal{N}$
is at most countable with $\nu^*$ as only
possible accumulation point.

\item[(2)] For $\nu> \nu^*$, we have $\mu(\nu)< e_\infty$ and every
minimiser $\vect{q} = (q_k)_{k}$ of $g_\nu$ satisfies $\sum_{k\in\N}
q_k =1$. If $\nu\notin\mathcal{N}$, then $g_\nu$ has the unique
minimiser $\vect{q}^\ssup{\nu}=(q_k^\ssup{\nu})_k$ with $q_k^\ssup{\nu}
= \delta_{k,k(\nu)}$ with $k(\nu)$ the unique minimiser of $k\mapsto
(E_k -\nu)/k$ over $\N$. The map $\nu\mapsto k(\nu)$ is constant
between subsequent points in $\Ncal$.

\item[(3)] For $\nu<\nu^*$, we have $\mu(\nu) = e_\infty$ and the unique
minimiser of $g_\nu$ is the constant zero sequence $(q_k)_{k\in\N}$
with $q_k = 0$ for any $k$.
\end{longlist}
\end{lemma}

This is essentially \cite{CKMS10}, Theorem~1.5; the proof is found in
the \hyperref[sec-Appendix]{Appendix}. If, as in \cite{CKMS10}, the
point $\infty$ is added to
the state space $\N$ of the measures in $\mathcal{Q}$, then the
minimisers of $g_\nu$ are concentrated on $\N$ for $\nu>\nu^*$ and on
$\{\infty\}$ for $\nu<\nu^*$; it was left open in \cite{CKMS10} whether
or not the latter regime is nonvoid.

The set $\mathcal{N}$ is infinite if and only if $(E_k - k e_\infty
)_{k\in\N}$ has no minimiser. In dimensions $d\geq2$, it is expected
(and shown in some cases; see~\cite{R81,yeung}) that $E_k - k
e_\infty\geq\const k ^{1-1/d} \to\infty$, ensuring that $\mathcal
{N}$ is a finite set.

Now we can draw a conclusion from Theorem~\ref{thm:main} about the
limiting behaviour of the minimisers of the rate function in the dilute
low-temperature limit. The following assertions are well known
consequences from the $\Gamma$-convergence of Theorem~\ref{thm:main};
see \cite{dMaso}, Theorem~7.4 and Corollary 7.24.

%
\begin{cor}\label{Cor-MinimConv} In the situation of Theorem~\ref{thm:main}:
\begin{longlist}[(1)]
\item[(1)] the free energy per particle converges to $\mu(\nu)$,
\[
\frac{1}\rho f(\beta,\rho) \to\mu(\nu);
\]
%
%
\item[(2)] if $\mu(\cdot)$ is differentiable at $\nu$ [i.e., for $\nu\in
(0,\infty)\setminus\Ncal$], any minimiser $\vectt{\rho}^\ssup{\beta,\rho
}= (\rho_k^\ssup{\beta,\rho})_k$ of $J_{\beta,\rho}$ converges to the
minimiser of $g_\nu$:
\[
\frac{k \rho_k^\ssup{\beta,\rho}} {\rho} \to q_k^\ssup{\nu},\qquad k\in \N.
\]
\end{longlist}
\end{cor}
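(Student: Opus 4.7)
The plan is to invoke the standard consequences of $\Gamma$-convergence (\cite[Theorem 7.4 and Corollary 7.24]{dMaso}) for the functionals $F_{\beta,\rho}\colon\mathcal{Q}\to\R\cup\{\infty\}$ defined by $F_{\beta,\rho}(\vect{q}):=\rho^{-1} f(\beta,\rho,(\rho q_k/k)_{k\in\N})$, which are precisely the ones appearing in Theorem~\ref{thm:main}. Since $\mathcal{Q}$ is a closed subset of the compact product space $[0,1]^\N$, it is itself compact and metrizable, so equi-coercivity holds automatically and no further hypothesis beyond Theorem~\ref{thm:main} is needed.

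For part (i), I would begin from the identity $\inf_{M_{\rho+\eps}} J_{\beta,\rho}=0$, which is forced by the fact that $\vect{\rho}_\Lambda$ takes values in $M_{N/|\Lambda|}\subseteq M_{\rho+\eps}$ with total mass one, together with upper and lower bounds of the LDP. Combined with \eqref{eq:rf-fe} and the assertion from Theorem~\ref{thm:ldp} that the effective domain of $J_{\beta,\rho}$ is contained in $M_\rho$, this yields $f(\beta,\rho)=\inf_{M_\rho} f(\beta,\rho,\cdot)$. The bijection $M_\rho\ni\vect{\rho}\leftrightarrow\vect{q}=(k\rho_k/\rho)_{k\in\N}\in\mathcal{Q}$ translates this to $\inf_{\mathcal{Q}} F_{\beta,\rho}=\rho^{-1}f(\beta,\rho)$. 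The convergence-of-minima part of the $\Gamma$-convergence toolbox then delivers $\rho^{-1}f(\beta,\rho)\to \inf_{\mathcal{Q}} g_\nu$, and the right-hand side equals $\mu(\nu)$ by the representation \eqref{muident}.

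For part (ii), when $\nu\notin\mathcal{N}$ Lemma~\ref{lem:varprob} identifies a unique minimiser $\vect{q}^\ssup{\nu}$ of $g_\nu$ on $\mathcal{Q}$. Any minimiser $\vect{\rho}^\ssup{\beta,\rho}$ of $J_{\beta,\rho}$, whose existence follows from the fact that $J_{\beta,\rho}$ is a good rate function on the compact space $M_{\rho+\eps}$, corresponds via the bijection above to a minimiser $\vect{q}^\ssup{\beta,\rho}$ of $F_{\beta,\rho}$. By compactness of $\mathcal{Q}$ in the product topology, any subsequence admits a further pointwise-convergent subsequence, and the standard $\Gamma$-convergence fact that cluster points of sequences of minimisers minimise the $\Gamma$-limit forces every such limit to coincide with $\vect{q}^\ssup{\nu}$. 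Hence the entire family converges pointwise, i.e.\ $k\rho_k^\ssup{\beta,\rho}/\rho\to q_k^\ssup{\nu}$ for every $k\in\N$, which is exactly the claim.

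I do not expect a real obstacle here: the argument is a clean application of the abstract $\Gamma$-convergence machinery once one records the compactness of $\mathcal{Q}$ and the identification $\inf J_{\beta,\rho}=0$. The only point that could use a line of justification is the passage from the rescaled minimisers living in $\mathcal{Q}$ back to the original cluster density minimisers in $M_\rho$, but this is immediate from the definitions and the fact that the effective domain lies in $M_\rho$.
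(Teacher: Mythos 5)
Your proof is correct and follows the same route the paper intends: the corollary is stated there without an explicit proof, only with a citation to \cite[Theorem~7.4 and Corollary~7.24]{dMaso}, and your argument is precisely the clean elaboration of how those abstract results apply. The two small points you flag and resolve — the identity $\inf_{M_\rho} f(\beta,\rho,\cdot)=f(\beta,\rho)$ coming from $\inf J_{\beta,\rho}=0$ and the effective domain being contained in $M_\rho$, and the bijection $M_\rho\leftrightarrow\mathcal{Q}$ transporting minimisers — are exactly what is needed to make the dal Maso theorems applicable, and your observation that compactness of $\mathcal{Q}$ (a closed subset of $[0,1]^\N$ with the product topology, by Fatou's lemma for the counting measure) renders equi-coercivity automatic is correct and worth recording.
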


%
\begin{figure}[b]

\includegraphics{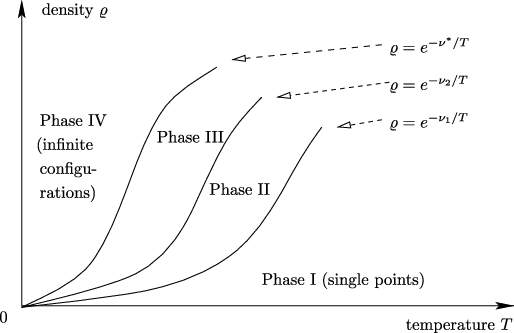}

\caption{A diagram illustrating the expected relationship of the slope
condition $-T \log\rho=   -\beta^{-1} \log\rho\to\nu$
and the minimisers of the rate function in the dilute low-temperature
limit. The phases I and IV always exist.
Depending on the pair potential, there can be values of $\nu$ for which
$2 \leq k(\nu) <\infty$, yielding intermediate phases
(e.g., II and III, although the precise number of intermediate phases
will depend on the pair potential).}\label{fig4}
\end{figure}

For an illustration, see Figure \ref{fig4}.
Another important consequence of Theorem~\ref{thm:main}, together with
the LDP of Theorem~\ref{thm:ldp}, is a kind of law of large numbers for
the cluster size distribution $\vectt{\rho}_{\Lambda_N}$ in the
thermodynamic limit, followed by the low-temperature dilute limit. A
convenient formulation is in terms of the vector $\vect{q}_\Lambda
=(q_{k,\Lambda})_{k\in\N}$ with $q_{k,\Lambda}=k\rho_{k,\Lambda}/\rho$,
the frequency of particles in $k$-clusters, if $|\Lambda|=N/\rho$.

%
\begin{cor}\label{Cor-LLN}
For any $\nu\in(0,\infty)\setminus\Ncal$, any $K\in\N$ and any $\eps
>0$, if $\beta$ is sufficiently large, $\rho$ sufficiently small and
$-\frac{1}\beta\log\rho$ is sufficiently close to $\nu$, then, for boxes
$\Lambda_N$ with volume $N/\rho$,
%
%
\begin{equation}
\label{LLN>} \lim_{N\to\infty}\P_{\beta,\Lambda_N}^\ssup{N}
\bigl(|q_{k(\nu),\Lambda
_N}-1|\geq\eps \bigr)=0\qquad \mbox{if }\nu>\nu^*
\end{equation}
and
%
%
\begin{equation}
\label{LLN<} \lim_{N\to\infty}\P_{\beta,\Lambda_N}^\ssup{N}
\Biggl(\sum_{k=1}^K q_{k,\Lambda_N}\geq
\eps \Biggr)=0 \qquad\mbox{if }\nu<\nu^*.
\end{equation}
\end{cor}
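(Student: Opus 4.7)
The plan is to combine the LDP of Theorem~\ref{thm:ldp}, which governs the thermodynamic limit $N\to\infty$ at fixed $\beta,\rho$, with the $\Gamma$-convergence of Theorem~\ref{thm:main} and the uniqueness of the minimiser of $g_\nu$ furnished by Lemma~\ref{lem:varprob}. Since the map $\vect{\rho}\mapsto\vect{q}=(k\rho_k/\rho)_k$ is continuous on $M_{\rho+\eps}$ in the pointwise topology, the contraction principle yields an LDP for $\vect{q}_{\Lambda_N}$ with speed $|\Lambda_N|$ and rate function $\tilde{J}_{\beta,\rho}(\vect{q}):=J_{\beta,\rho}((\smfrac{\rho q_k}{k})_k)$. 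The events in \eqref{LLN>} and \eqref{LLN<} translate into the closed sets
\begin{equation*}
C^>_\eps:=\{\vect{q}\in\mathcal{Q}:|q_{k(\nu)}-1|\geq\eps\}, \qquad C^<_{K,\eps}:=\Bigl\{\vect{q}\in\mathcal{Q}:\sum_{k=1}^K q_k\geq\eps\Bigr\},
\end{equation*}
both of which are compact in the compact space $\mathcal{Q}\subset[0,1]^{\N}$. At fixed $\beta,\rho$ the LDP upper bound therefore gives
\begin{equation*}
\limsup_{N\to\infty}\frac{1}{|\Lambda_N|}\log\P_{\beta,\Lambda_N}^\ssup{N}(\vect{q}_{\Lambda_N}\in C)\leq -\inf_C \tilde{J}_{\beta,\rho},
\end{equation*}
and the task is reduced to showing strict positivity of the right-hand side in the prescribed regime.

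To this end, use \eqref{eq:rf-fe} to rewrite
\begin{equation*}
\frac{\tilde{J}_{\beta,\rho}(\vect{q})}{\rho\beta}=\frac{1}{\rho}f\bigl(\beta,\rho,(\smfrac{\rho q_k}{k})_k\bigr)-\frac{f(\beta,\rho)}{\rho}.
\end{equation*}
By Theorem~\ref{thm:main} together with Corollary~\ref{Cor-MinimConv}(i), this quantity $\Gamma$-converges to $g_\nu-\mu(\nu)\geq 0$ in the dilute low-temperature limit. Under the hypothesis $\nu\notin\Ncal$, Lemma~\ref{lem:varprob} identifies the minimiser $\vect{q}^\ssup{\nu}$ of $g_\nu$ as unique: the delta $\delta_{\cdot,k(\nu)}$ for $\nu>\nu^*$ and the zero sequence for $\nu<\nu^*$. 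By construction $\vect{q}^\ssup{\nu}\notin C$ in either case, so lower semicontinuity of $g_\nu-\mu(\nu)$ on the compact set $\mathcal{Q}$ gives $c:=\inf_C(g_\nu-\mu(\nu))>0$. The standard $\Gamma$-convergence bound on compact sets, $\liminf\inf_C F_{\beta,\rho}\geq\inf_C F$, then yields $\liminf\inf_C\tilde{J}_{\beta,\rho}/(\rho\beta)\geq c$ along any curve inside the regime, so for $(\beta,\rho)$ sufficiently close to the limit parameters $\inf_C\tilde{J}_{\beta,\rho}\geq\tfrac{c}{2}\rho\beta>0$. Combining with the previous display gives exponential decay of $\P_{\beta,\Lambda_N}^\ssup{N}(\vect{q}_{\Lambda_N}\in C)$ in $|\Lambda_N|$, which is the conclusion.

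The one nontrivial point to verify is lower semicontinuity (in fact continuity) of $g_\nu$ on $\mathcal{Q}$ with the pointwise topology. Rewriting
\begin{equation*}
g_\nu(\vect{q})=e_\infty+\sum_{k\in\N}q_k\,a_k, \qquad a_k:=\frac{E_k-ke_\infty-\nu}{k},
\end{equation*}
the sequence $(a_k)$ is bounded and satisfies $a_k\to 0$ as $k\to\infty$ because $E_k/k\to e_\infty$. A standard truncation argument — the head is a finite linear combination of coordinates, and the tail is controlled by $\sup_{k>K}|a_k|\cdot\sum_{k>K}q_k\leq\sup_{k>K}|a_k|$ uniformly in $\vect{q}\in\mathcal{Q}$ — shows that $\vect{q}\mapsto\sum_k q_k a_k$ is continuous on $\mathcal{Q}$. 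Once this is in hand, the remainder is a routine combination of large-deviations and $\Gamma$-convergence machinery, the main obstacle being the careful organisation of the two successive limits, $N\to\infty$ first and then $(\beta,\rho)$ towards the limit regime.
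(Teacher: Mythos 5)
Your proof is correct and follows essentially the same route as the paper: an LDP upper bound for the closed event set, combined with the compact-set $\liminf$ inequality furnished by $\Gamma$-convergence and the strict separation of the unique minimiser of $g_\nu$ from that set. The paper sidesteps the contraction principle by working directly with the preimage set $A\subset M_\rho$ rather than transferring the LDP to $\vect{q}$-space, and it asserts without comment the continuity of $g_\nu$ that you spell out via the boundedness and vanishing of $a_k=(E_k-ke_\infty-\nu)/k$; these are cosmetic elaborations rather than a different argument.
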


\begin{pf} We prove \eqref{LLN>} and \eqref{LLN<} simultaneously.
Consider the set
\[
A= %
\cases{ \displaystyle\biggl\{\vectt{\rho}\in M_\rho\dvtx \biggl|
\frac{k(\nu)\rho_{k(\nu)}}\rho -1 \biggr|\geq\eps \biggr\},&
\quad$\mbox{for } \nu>\nu^*,$\vspace*{2pt}
\cr
\displaystyle\Biggl\{\vectt{\rho}\in M_\rho\dvtx\sum
_{k=1}^K \frac{k\rho_{k,\Lambda
}}\rho\geq\eps \Biggr\},& \quad$\mbox{for } \nu<\nu^*$.} %
\]
Then the $\Gamma$-convergence of Theorem~\ref{thm:main} implies \cite{dMaso},
Theorem~7.4, that
\[
\liminf_{\beta,\rho}\frac{1}\rho\inf_A
f(\beta,\rho,\cdot)\geq-\inf_A g_\nu,
\]
where $\liminf_{\beta,\rho}$ refers to the limit in Theorem~\ref
{thm:main}. Furthermore, it is easy to see from Lemma~\ref{12345678987456321}
that $\delta=\inf_A g_\nu-\inf g_\nu$ is positive. We pick now $\beta$
so large and $\rho$ so small and $-\beta^{-1}\log\rho$ so close to $\nu
$ that $\frac{1}\rho\inf_A f(\beta,\rho,\cdot)-\inf_A g_\nu\geq-\delta
/4$ and $\frac{1}\rho f(\beta,\rho)-\mu(\nu)\leq\delta/4$ [the latter is
possible by Corollary~\ref{Cor-MinimConv}(1)]. Now the LDP of
Theorem~\ref{thm:ldp} yields that
\begin{eqnarray*}&& \limsup_{N\to\infty}\frac{1}{|\Lambda_N|}
\log\P_{\beta,\Lambda_N}^\ssup {N} (\vectt{\rho}_{\Lambda_N}\in A ) \\
&&\qquad
\leq-\inf_A I_{\beta,\rho}=-\beta \Bigl[\inf
_A f(\beta,\rho,\cdot )-f(\beta,\rho) \Bigr]
\\
&&\qquad\leq-\beta\rho \biggl[\inf_A g_\nu-\mu(\nu)-
\frac\delta4-\frac\delta 4 \biggr]=-\beta\rho\delta/2<0.
\end{eqnarray*}
Hence, $\lim_{N\to\infty}\P_{\beta,\Lambda_N}^\ssup{N}(\vectt{\rho
}_{\Lambda_N}\in A)=0$. Noting that this probability is identical to
the two probabilities on the left of \eqref{LLN>} and \eqref{LLN<} for
our two choices of $A$, finishes the proof.
\end{pf}

It may come as a surprise that, for most values of the parameter $\nu$,
the cluster size distribution is asymptotically concentrated on just
one particular cluster size that depends only on $\nu$. This may be
vaguely explained by the fact that the zero-temperature limit $\beta\to
\infty$ forces the system to become asymptotically ``frozen'' in a
state in which every cluster size assumes the globally optimal
configuration size, which is unique for $\nu\in(\nu^*,\infty)\setminus
\Ncal$. Furthermore, note that Corollary~\ref{Cor-LLN} does not give
the existence of ``infinite large'' clusters (i.e., clusters whose size
diverges with $N$) for any value of $\beta$ and $\rho$, not even for
$\nu<\nu^*$ and $-\beta^{-1}\log\rho\approx\nu$.

\subsection{Uniform bounds}\label{sec-Uniform}
Under some natural additional assumptions on the pair potential, the
assertions of
Theorem~\ref{thm:main} can be strengthened; see Theorem~\ref{thm:unif}
below. Indeed,
we will assume that the ground states of the functional $U_N$ consist
of well-separated particles,
which are contained in a ball with volume of order $N$, and we assume
some more regularity of the
interaction function $v$. Then we show that the $\Gamma$-convergence
in Theorem~\ref{thm:main} in the
coupled limit in \eqref{dilutelimit} can be strengthened to estimates
that are uniform in some low-temperature,
low-density rectangle $(\overline\beta,\infty) \times(0,\overline\rho)$.
This leads to corresponding uniform estimates
on $|\frac{1}\rho f(\beta,\rho) - \mu(\nu)|$ and on minimisers. We now
formulate this.

%
\begin{assumption}[(Minimum interparticle distance, H{\"o}lder
continuity)] \label{ass:hoelder}

\begin{longlist}[(ii)]
\item[(i)] There is $r_\mathrm{min}\geq r_\mathrm{hc}$ such that, for
all $N \in\N$,
every minimiser $(x_1,\ldots,\break x_N) \in(\R^d)^N$ of the energy function
$U_N$ has interparticle
distance lower bounded as $|x_i- x_j| \geq r_\mathrm{min}$, $i\neq j$.
\item[(ii)] The pair potential $v$ is uniformly H{\"o}lder continuous
in $[r_\mathrm{min},\infty)$.
\end{longlist}
\end{assumption}

The existence of a uniform lower bound $r_\mathrm{min}$ for ground
state interparticle distance is, of course,
trivial when the potential has a hard core $r_\mathrm{hc}>0$. A
sufficient condition for
the existence of $r_\mathrm{min}>0$ for a potential without
hard core is, for example, that $v(r)/r^d \to\infty$ as $r\to0$, as
can be shown along~\cite{Th06}, Lemma 2.2.

%
\begin{assumption}[(Maximum interparticle distance)] \label{ass:maxdist}
There is a constant $c>0$ such that for all $N\in\N$
every minimiser $(x_1,\ldots,x_N) \in(\R^d)^N$ of the energy function $U_N$
has interparticle distance upper bounded by $|x_i - x_j| \leq c N
^{1/d}$.
\end{assumption}

On physical grounds, we would expect that this assumption is
true for every reasonable potential. To the best of our knowledge,
however, nontrivial rigorous results are available
in dimension two only, for Radin's soft disk potential~\cite{R81} and
for potentials satisfying
conditions (H1) to (H3) from~\cite{yeung}. These potentials satisfy
Assumption~\ref{ass:hoelder} as well.

%
\begin{theorem} \label{thm:unif}
Suppose that in addition to Assumption \ref{assV} the pair potential also
satisfies Assumptions~\ref{ass:hoelder} and~\ref{ass:maxdist}. Then
there are $\overline\rho,\overline\beta,C>0$ such that for every
$(\beta,\rho) \in[\overline\beta,\infty)\times(0,\overline\rho]$,
putting $\nu: = - \beta^{-1} \log\rho$, the following holds:
\begin{longlist}[(1)]
\item[(1)] Estimate on the rate function:
%
%
\begin{equation}
\label{estiratefct}\qquad \biggl| \frac{1} \rho f\biggl(\beta,\rho, \biggl({
\frac{\rho q_k}{k}\biggr)_{k\in\N}}\biggr) - g_\nu
\bigl((q_k)_k \bigr)\biggr | \leq\frac{C}\beta\log
\beta,\qquad (q_k)_{k\in\N}\in\mathcal{Q}.
\end{equation}
%
%
\item[(2)] Estimate on the free energy:
%
%
\begin{equation}
\label{estifreeen} \biggl|\frac{1}\rho f(\beta,\rho) - \mu( \nu) \biggr| \leq2
\frac{C} \beta \log\beta.
\end{equation}
\item[(3)] Minimisers: For any minimiser $\vectt{\rho}^\ssup{\beta,\rho}$ of
$f(\beta,\rho,\cdot)$, put $\vect{q}^\ssup{\beta,\rho}:=\break
( k \rho_k^\ssup{\beta,\rho}/  \rho)_{k\in\N}$. Then, if $\nu<\nu^*$,
%
%
\begin{equation}
\label{eq:smallnu} \sum_{k\in\N} \frac{q_k^\ssup{\beta,\rho}}{k} \leq2
\frac{C} {\nu^* -
\nu} \frac{1}\beta\log\beta.
\end{equation}
If $\nu>\nu^*$, then
%
%
\begin{equation}
\label{eq:largenu} \sum_{k\in M(\nu)} q^\ssup{\beta,
\rho}_k \geq1 - 2 \frac{C} {\Delta(\nu
)} \frac{1}\beta\log\beta,
\end{equation}
where
\[
\Delta(\nu):= \inf \biggl\{ \frac{E_k- \nu}{k} \Big| k \in\N \setminus M(\nu) \biggr
\}- \mu(\nu)>0
\]
is the gap above the minimum, and $M(\nu) \subset\N$ is the
set of minimisers of $((E_k - \nu)/k)_{k\in\N}$ [thus $M(\nu) = \{
k(\nu)\}$ for $\nu\notin\mathcal{N}$].
\end{longlist}
\end{theorem}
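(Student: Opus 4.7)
The plan is to strengthen the $\Gamma$-convergence of Theorem~\ref{thm:main} to a quantitative statement by controlling the error in the ``ideal gas of clusters'' approximation described after Theorem~\ref{thm:main}. The first step is to rewrite the ideal free energy \eqref{eq:fideal-def} using $\rho_k = \rho q_k/k$, which yields
\begin{equation*}
	\frac{1}{\rho} f^\ideal\bigl(\beta,\rho, (\smfrac{\rho q_k}{k})_k\bigr)
	= \sum_{k\in\N} q_k \frac{\beta k f_k^\cl(\beta) - \log \rho}{\beta k}
	+ \Bigl(1-\sum_k q_k\Bigr) f^\cl_\infty(\beta) + \frac{1}{\beta\rho}\sum_k \rho_k\bigl(\log(q_k/k) - 1\bigr).
\end{equation*}
Since $\nu = -\beta^{-1}\log\rho$, this is structurally $g_\nu$ up to (a) replacing $k f_k^\cl(\beta)$ by $E_k$, (b) replacing $f^\cl_\infty(\beta)$ by $e_\infty$, and (c) discarding the last sum, which is the mixing entropy. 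Each of these three error terms must be shown to be $O(\beta^{-1}\log\beta)$ uniformly in $(q_k)_k \in \mathcal{Q}$ and in $(\beta,\rho)$ lying in the claimed rectangle.

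The core technical step is to show, under Assumptions~\ref{ass:hoelder}--\ref{ass:maxdist}, that $|f^\cl_k(\beta) - E_k/k| \leq C \beta^{-1}\log\beta / k$ uniformly in $k$, and analogously for $f_\infty^\cl$. The lower bound $f^\cl_k(\beta) \geq E_k/k$ is immediate. For the upper bound, one restricts the partition function of a single $k$-cluster to configurations where each of the $k-1$ non-pinned particles is displaced by at most $\delta \coloneqq \beta^{-1/\alpha}$ (with $\alpha$ the H\"older exponent) from a ground state position; Assumption~\ref{ass:hoelder}(i) ensures that connectivity and stability are preserved for small enough $\delta$, and Assumption~\ref{ass:hoelder}(ii) then gives $|U_k(\vect{x}) - E_k| \leq Ck\delta^\alpha$, so the localized partition function is bounded below by $c^k \delta^{d(k-1)} e^{-\beta E_k - C k}$. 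Taking logs yields $-\beta^{-1}\log Z_k/k \leq E_k/k + C\log\beta/\beta$. Assumption~\ref{ass:maxdist} is crucial for the large-$k$ regime since it guarantees that the ground-state cluster has diameter $\lesssim k^{1/d}$, so the localized partition function can be fit into a ball of bounded volume per particle. The mixing entropy $\beta^{-1} \rho^{-1}\sum_k \rho_k(\log(q_k/k)-1)$ is bounded by $C/\beta$ since $q_k \in [0,1]$ and $\sum_k q_k/k$ is controlled. Combining the three estimates and matching $f(\beta,\rho,\cdot)$ to $f^\ideal$ as in the proof of Theorem~\ref{thm:main} (again using separation of clusters, here quantified by $\delta$) yields \eqref{estiratefct}.

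Once (i) is established, part (ii) follows by taking the infimum over $(q_k)_k$ and using \eqref{muident}. For part (iii), one starts from the observation that any minimiser $\vect{q}^{(\beta,\rho)}$ of the left-hand side of \eqref{estiratefct} satisfies $g_\nu(\vect{q}^{(\beta,\rho)}) \leq \mu(\nu) + 2C\beta^{-1}\log\beta$ by combining (i) and (ii). The structural identity
\begin{equation*}
	g_\nu((q_k)_k) - \mu(\nu) = \sum_{k \notin M(\nu)} q_k \Bigl[\frac{E_k-\nu}{k} - \mu(\nu)\Bigr] + \Bigl(1-\sum_k q_k\Bigr)\bigl(e_\infty - \mu(\nu)\bigr)
\end{equation*}
then delivers both \eqref{eq:smallnu} and \eqref{eq:largenu}: for $\nu<\nu^*$ one has $M(\nu)=\emptyset$, $\mu(\nu)=e_\infty$, and $(E_k-\nu)/k - e_\infty \geq (\nu^*-\nu)/k$ by definition of $\nu^*$; for $\nu > \nu^*$, the bracket is bounded below by $\Delta(\nu)$ for $k\notin M(\nu)$, and $e_\infty - \mu(\nu) \geq \Delta(\nu)$ since $(E_k-\nu)/k \to e_\infty$. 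The main obstacle I expect is the uniform control of the H\"older/localization argument across all cluster sizes $k$ simultaneously and across $(\beta,\rho)$ in the prescribed rectangle, in particular avoiding any $k$-dependent factors that would spoil the summation in the cluster decomposition; this is where Assumption~\ref{ass:maxdist} has to be used delicately to ensure the per-particle error stays of the order $\beta^{-1}\log\beta$ and not worse.
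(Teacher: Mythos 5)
Your proposal follows the same route as the paper's proof: localize the cluster partition function near a scaled ground-state configuration, use H\"older continuity together with Assumption~\ref{ass:maxdist} to obtain the \emph{uniform} per-particle bound $f_k^{\cl,a_k}(\beta) \leq E_k/k + C\beta^{-1}\log\beta$ for boxes $a_k\asymp k^{1/d}$ (the paper's Lemma~\ref{lem:akunif}), combine with the ideal-cluster-gas sandwich (Lemma~\ref{prop:lb} below, Proposition~\ref{prop:ub} above) and the entropy estimate (Lemma~\ref{lem:entropy2}), and then deduce parts (2)--(3) from the affine identity for $g_\nu-\mu(\nu)$ exactly as in Section~\ref{sec-proofThm17}. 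One caution: the claim that ``$f_k^\cl(\beta)\geq E_k/k$ is immediate'' is false as stated -- a naive volume bound yields a $k$-dependent error, and the uniform statement $f_k^\cl(\beta)\geq E_k/k-C/\beta$ requires the Cayley tree-counting argument of Lemma~\ref{lem:cfe-lower-bound} -- but since this $O(1/\beta)$ correction fits within the $O(\beta^{-1}\log\beta)$ budget, the overall argument goes through.
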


Theorem~\ref{thm:unif} is proved in Section~\ref{sec-proofThm17}. One
can see from the proof that one can choose $\overline\rho=(2\alpha
+2R)^{-d}$. It follows in particular that the $\Gamma$-convergence and
the two convergences from Corollary~\ref{Cor-MinimConv} can be
strengthened to convergence for just taking $\beta\to\infty$, uniformly
in $\rho\in(0,\overline\rho]$, with an error of order $\beta^{-1} \log
\beta$. This form of the error order term is an artefact of the
assumption of H\"older continuity; the constant $C$ depends on the H\"
older parameter.

Note that \eqref{eq:smallnu} implies that, in the case $\nu<\nu^*$,
for every $K \in\N$, the
fraction of particles in clusters of size $ \leq K$ is bounded by
\[
\sum_{k\leq K} \frac{k\rho_k^\ssup{\beta,\rho}}{\rho} = \sum
_{k\leq K} q_k^\ssup{\beta,\rho} \leq
\frac{2C}{\nu^*- \nu} K \frac{1}\beta\log\beta.
\]
This shows that, as $\beta\to\infty$, for some choices of $K=K_\beta\to
\infty$, the fraction of particles in clusters of size $\leq K_\beta$
vanishes; that is, the average cluster size becomes very large. Note
that the law of large numbers in \eqref{LLN<} in Corollary~\ref
{Cor-LLN} may, under Assumptions~\ref{ass:hoelder} and \ref
{ass:maxdist}, be proved also with $K$ replaced by $K_\beta$.

\subsection{Some remarks concerning related mathematical and physical
problems}\label{sec-physics} Our problem is connected with continuum
percolation problems for interacting particle systems; see the
review~\cite{ghm01}. In our setting of finite systems, the term
``percolation'' should be replaced with ``formation of unbounded
components,'' that is, clusters whose size diverges as the number of
particles goes go infinity. The problem of percolation or
nonpercolation for continuous particle systems in an infinite-volume
Gibbs state (i.e., in a grand-canonical setting) is studied in~\cite
{pechersky}, where Pechersky and Yambartsev prove that, for
sufficiently high chemical potential and sufficiently low temperature,
percolation does occur. However, they do not give any information on
the densities at which percolation occurs. This hinders the physical
interpretation, since one cannot say whether the percolation is due to
high density or strong attraction. In this light, our results are
stronger and at the same time weaker: we
do show that a transition from bounded to unbounded clusters happens at
low density, but only in a limiting sense along low-temperature,
low-density curves; there is no fixed temperature or density at which
we prove the formation of unbounded clusters.

In addition, our work has an interesting relationship to quantum
Coulomb systems. In the simplest case, a gas of protons and electrons,
we may ask whether we observe a fully ionized gas, where protons
and electrons stay for themselves, or a gas of neutral molecules, with
protons and electrons paired up together. The mathematical model is in
terms of a quantum mechanical Hamiltonian in a fermionic Hilbert space
for particles of positive and negative charge interacting via a long
ranged Coulomb potential. The analogues of our ground states $E_k$ are
defined as ground state energies of the Hamiltonian restricted to
sectors with prescribed particle numbers and center of mass motion removed.
Rigorous mathematical results were given by Fefferman~\cite{fefferman}
(see also~\cite{cly}), in the \emph{Saha regime},
also called \emph{atomic or molecular limit}: when the temperature goes
to $0$ at fixed, negative enough
chemical potential, the Coulomb gas behaves like an ideal gas of
different types of molecules or particles. The
chemical composition is determined by the chemical potential by an
energy minimization problem, akin to minimizing $E_k - k \mu$ as a
function of $k$, which in turn is the grand-canonical version of our
auxiliary variational problem $ (E_k - \nu)/k = \mathrm{min}$.

Our results adapt this quantum Coulomb system picture to a classical
setting. From this point of view,
the key novelty is that we work in the canonical rather than the
grand-canonical ensemble; this allows us to extend results to the
region where formation of large clusters occurs. Indeed, in the
canonical ensemble we can take the density larger than the transition
density, which is conjectured to exist and to be of the order $\exp
(-\beta\nu^*)$, and at the same time impose that the density be small.
In the grand-canonical ensemble, we may of course take the chemical
potential larger than the transition potential, but then we lose
control over the density and cannot apply the dilute mixture approximation.

The remainder of this paper is organised as follows. In Section~\ref
{sec-ProofLDP} we prove the LDP of Theorem~\ref{thm:ldp}, in
Section~\ref{sec-boundsf} we compare the rate function with an explicit
ideal rate function, and in Section~\ref{sec-finftyfinite} we compare
temperature-depending quantities with the ground states. Finally, the
proofs of Theorems~\ref{thm:main} and \ref{thm:unif} are given in
Section~\ref{sec-lastproofs}.

\section{Proof of the LDP}\label{sec-ProofLDP}

In this section, we prove Theorem~\ref{thm:ldp}. We fix $\beta
\in(0,\infty)$ and $\rho\in(0,\rho_{\mathrm{ cp}})$ throughout this section.
In Section~\ref{sec-Strategy} we explain our strategy and formulate the
main steps, and in Sections~\ref{sec-proofsubaddi}--\ref{sec-proofLDPj}
we prove these steps. The proof of Theorem~\ref{thm:ldp} is finished in
Section~\ref{sec-finishLDP}.

\subsection{Strategy}\label{sec-Strategy}

The main idea is to derive first a large deviations principle
for the distribution of $(\rho_{k,\Lambda})_{k=1,\ldots,j}$ for fixed
$j\in\N$, that is, for the projection of $\vectt{\rho}_{\Lambda}$ on the
first $j$ components, and apply the Dawson--G\"artner theorem for the
transition to the projective limit as $j\to\infty$. From the proof of
the principle for the projection, we isolate an important step (see
Proposition~\ref{prop:constrained-free-energy}): using standard
subadditivity arguments, we prove the existence of thermodynamic limit
for constrained free energy, the constraint referring to cluster size
concentrations of size $\leq j$. The principle for the projection of
$\vectt{\rho}_{\Lambda}$ appears in Proposition~\ref{prop:ldp-j}.

%

Given $N,N_1,\ldots,N_j \in\N_0$ define the constrained partition
function with fixed cluster numbers of size $\leq j$,
%
%
\begin{equation}
\label{eq:constrained-pf} \qquad Z_\Lambda(\beta,N,N_1,\ldots,N_j):= \frac{1}{N!} \int_{\Lambda^N} \e^{-\beta U_N(\vect{x})} \prod
_{k=1}^j\1\bigl\{N_k(\vect{x})
= N_k\bigr\} \,\dd\vect{x}.
\end{equation}
Note that $Z_\Lambda(\beta,N,N_1,\ldots,N_j)=0$ if $\sum_{k=1}^j k N_k >N$.

In the following we shall often be interested in the
interior or boundary of subsets $A \subset[0,\infty)^{j+1}$ for some
$j \in\N$.
Unless explicitly stated otherwise, $\Int A$ and $\partial A$ refer to
the interior
and boundary of $A$ considered as a subset of $\R^{j+1}$. In
particular, if $0 \in A$, then
$0$ is automatically a boundary point.

We denote by $\dom h= \{x\dvtx h(x)<\infty\}=\{ h(\cdot) <\infty\}$
the effective domain of an $(-\infty,\infty]$-valued function $h$.

%
\begin{prop} \label{prop:constrained-free-energy}
Fix $j\in\N$. Then there is a function $f_j(\beta,\cdot)\dvtx\break
[0,\infty)^{j+1} \to\R\cup\{\infty\}$ such that:
\begin{itemize}
\item$f_j(\beta,\cdot)$ is convex and lower semi-continuous;
\item its effective domain has nonempty interior $\Delta_j:=\Int_{\R
^{j+1}} \dom f_j(\beta,\cdot)$
and $f_j(\beta,\cdot)$ is continuous in $\Delta_j$;
\item its effective domain is contained in
\[
\dom f_j(\beta,\cdot) \subset\overline\Delta_j \subset
\Biggl\{ (\rho,\rho_1,\ldots, \rho_j) \in[0,
\infty)^{j+1} \Big| \rho\in[0,\rho_\mathrm{cp}], \sum
_{k=1}^j k \rho_k \leq\rho \Biggr\},
\]
\end{itemize}
and, moreover, if $|\Lambda_N|, N, N_1^\ssup{N},\ldots, N_j^\ssup{N}
\to\infty$ in such a way that
%
%
\begin{equation}
\label{convN1bisj} \frac{N}{|\Lambda_N|}\to\rho, \frac{N_1^\ssup{N}}{|\Lambda_N|} \to
\rho_1, \ldots, \frac{N^\ssup{N}_j}{|\Lambda_N|} \to\rho_j,
\end{equation}
then:
\begin{itemize}
\item if $(\rho,\rho_1,\ldots,\rho_j) \in\Delta_j$,
%
%
\begin{equation}
\label{eq:cfe-limit} \lim_{N\to\infty} \frac{1}{|\Lambda_N|}\log
Z_{\Lambda_N}\bigl(\beta,N,N_1^\ssup{N},
\ldots,N_j^\ssup{N}\bigr) = - \beta f_j(\beta,
\rho,\rho_1,\ldots,\rho_j),
\end{equation}
and the limit is finite;
\item if $(\rho,\rho_1,\ldots,\rho_j) \in\partial\Delta_j$ (boundary of
$\Delta_j$), then
%
%
\begin{eqnarray}
\label{eq:cfe-lim-boundary} \limsup_{N\to\infty} \frac{1}{|\Lambda_N|} \log
Z_{\Lambda_N}\bigl(\beta,N,N_1^\ssup{N},
\ldots,N_j^\ssup{N}\bigr) &\leq&- \beta f_j(\beta,
\rho,\rho_1,\ldots,\rho_j)
\nonumber
\\[-8pt]
\\[-8pt]
\nonumber
&\in&\R\cup\{-\infty \};
\end{eqnarray}
%
%
\item if $(\rho,\rho_1,\ldots,\rho_j)\in\overline{\Delta}_j^{\mathrm c}$,
then \eqref{eq:cfe-limit} holds true, and the limit is $- \beta
f_j(\beta,\rho,\rho_1,\break\ldots,  \rho_j) = -\infty$.
\end{itemize}
\end{prop}

This proposition is proved in Section~\ref{sec-proofsubaddi}.

The set $\Delta_1$ is related to close-packing situations. For example,
when $j=1$ and the density
$\rho$ is higher than $1/|B(0,R)|$ (where we recall that $R$ is the
parameter in our notion of connectedness),
it is impossible to have a gas formed only of $1$-clusters, and we have
$f_1(\beta,\rho,\rho) = \infty$.

%

Analogously to \eqref{eq:rf-fe}, let
\[
\label{eq:ij} I_{\beta,\rho,j}(\rho_1,\ldots,\rho_j):= \beta \bigl( f_j(\beta,\rho,\rho_1,\ldots,
\rho_j) - f(\beta,\rho) \bigr).
\]
We will prove in Section~\ref{sec-proofLDPj} the following.

%
\begin{prop}[(LDP for projection of $\vectt{\rho}_{\Lambda}$)] \label{prop:ldp-j}
Fix $j\in\N$. Then, in the thermodynamic limit $N\to\infty$, $L\to
\infty$, $N/L^d \to\rho$, the distribution of $(\rho_{1,\Lambda},\ldots,   \rho_{j,\Lambda})$ under the Gibbs measure $\P_{\beta,\Lambda}^\ssup
{N}$ with $\Lambda=[0,L]^d$
satisfies a large deviations principle with scale $|\Lambda|$
and rate function $I_{\beta,\rho,j}$. Moreover,
the rate function is good and convex.
\end{prop}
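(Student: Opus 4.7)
The strategy is to reduce the LDP to the asymptotics of the constrained partition functions already supplied by Proposition~\ref{prop:constrained-free-energy}, together with a simple counting argument coming from the fact that $N_1,\ldots,N_j$ take at most $(N+1)^j$ values. Throughout, I fix $j\in\N$ and consider sequences $N,|\Lambda_N|\to\infty$ with $N/|\Lambda_N|\to\rho$; for $\eps>0$ small enough, all relevant cluster-density vectors lie in the compact set
\begin{equation*}
	K_\eps := \Bigl\{ (\rho_1,\ldots,\rho_j) \in [0,\infty)^j \,\Big|\, \sum_{k=1}^j k\rho_k \leq \rho + \eps \Bigr\},
\end{equation*}
because $\rho_{k,\Lambda_N} \leq (N/|\Lambda_N|)/k$. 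Hence tightness is automatic.

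The starting point is the identity, valid for any Borel set $A\subset[0,\infty)^j$,
\begin{equation*}
	\P_{\beta,\Lambda_N}^{\ssup{N}}\bigl((\rho_{1,\Lambda_N},\ldots,\rho_{j,\Lambda_N})\in A\bigr)
	= \frac{1}{Z_{\Lambda_N}(\beta,N)} \sum_{\vect{N}^{\ssup j}}
	Z_{\Lambda_N}(\beta,N,N_1,\ldots,N_j),
\end{equation*}
where the sum runs over $(N_1,\ldots,N_j)\in\N_0^j$ with $(N_1/|\Lambda_N|,\ldots,N_j/|\Lambda_N|)\in A$ and $\sum_k k N_k \leq N$. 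For the \emph{upper bound} on a closed set $F\subset[0,\infty)^j$, I bound the sum by $(N+1)^j$ times the maximum term, take logarithms, divide by $|\Lambda_N|$, and pass to the limsup. Using \eqref{eq:cfe-limit}, \eqref{eq:cfe-lim-boundary} together with the definition of $f(\beta,\rho)$, a standard compactness/covering argument (cover $F\cap K_\eps$ by finitely many small balls on which $f_j(\beta,\rho,\cdot)$ is essentially constant up to the lower semi-continuity error) yields
\begin{equation*}
	\limsup_{N\to\infty} \frac{1}{|\Lambda_N|}\log \P_{\beta,\Lambda_N}^{\ssup{N}}\bigl((\rho_{1,\Lambda_N},\ldots,\rho_{j,\Lambda_N})\in F\bigr)
	\leq -\inf_{F} I_{\beta,\rho,j}.
\end{equation*}
For the \emph{lower bound} on an open set $G$, it suffices to consider a single point $(\rho_1,\ldots,\rho_j)\in G$ with $f_j(\beta,\rho,\rho_1,\ldots,\rho_j)<\infty$. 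If the point lies in the interior $\Delta_j$ of $\dom f_j(\beta,\rho,\cdot)$, I pick integers $N_k^{\ssup N}$ with $N_k^{\ssup N}/|\Lambda_N|\to\rho_k$ such that for $N$ large enough $(N_1^{\ssup N}/|\Lambda_N|,\ldots,N_j^{\ssup N}/|\Lambda_N|)\in G$, and \eqref{eq:cfe-limit} gives the matching lower bound. If the point is a boundary point of the effective domain with finite value, I exploit convexity of $f_j(\beta,\rho,\cdot)$: taking any $\vect{y}\in\Delta_j$ and setting $\vect{z}_t := t\vect{y}+(1-t)(\rho_1,\ldots,\rho_j)$ for $t\in(0,1)$, $\vect{z}_t\in\Delta_j$ by convexity of $\Delta_j$, lies in $G$ for $t$ small, and $f_j(\beta,\rho,\vect{z}_t)\to f_j(\beta,\rho,\rho_1,\ldots,\rho_j)$ as $t\downarrow0$ by the standard continuity of convex lower semi-continuous functions along line segments ending at a boundary point. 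Applying the previous step to $\vect{z}_t$ and letting $t\downarrow0$ gives the bound for the boundary point.

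Finally, $I_{\beta,\rho,j}$ is convex because $f_j(\beta,\rho,\cdot)$ is, and it is good: it is lower semi-continuous by Proposition~\ref{prop:constrained-free-energy}, and its level sets lie in the compact set $K_0$ since $f_j(\beta,\rho,\rho_1,\ldots,\rho_j)=\infty$ whenever $\sum_k k\rho_k>\rho$. The main obstacle in this plan is the boundary behaviour of $f_j$: one has to ensure that lower-bound sequences can actually be constructed when the target density lies on $\partial\Delta_j$, which is handled by the convexity-plus-lower-semi-continuity argument above rather than by working directly with Proposition~\ref{prop:constrained-free-energy} at the boundary.
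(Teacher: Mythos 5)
Your proposal follows essentially the same strategy as the paper's proof of Proposition~\ref{prop:ldp-j}: write the probability as a sum of constrained partition functions, bound the sum by a sub-exponential count times the maximal term, invoke Proposition~\ref{prop:constrained-free-energy} for the asymptotics, and handle the boundary of the effective domain via convexity plus radial continuity. Using $(N+1)^j$ as the combinatorial count rather than the Hardy--Ramanujan bound $\e^{O(\sqrt N)}$ that the paper uses is equally valid (both are $\e^{o(|\Lambda_N|)}$) and in fact a bit cleaner, since $j$ is fixed. Your boundary-point argument via the line segment $\vect z_t = t\vect y + (1-t)\vect\rho$ together with the radial limit of a convex lsc function is precisely the content of the paper's Lemma~\ref{lem:ij-properties}(3) and Eq.~\eqref{eq:fjradial}, so you have correctly identified the main technical obstacle.

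There are, however, two points you leave implicit that the paper treats with separate lemmas and that do need attention. First, your lower bound for boundary points requires an anchor point $\vect y$ in the interior of the $\rho$-section $C_j(\rho)$; you never show that this set is non-empty. The paper needs Lemma~\ref{lem:fj-origin} (existence of a $\vect\rho$ with finite $f_j(\beta,\rho,\vect\rho)$, obtained by a maximum-term argument on the full partition function) together with the cone construction of Lemma~\ref{lem:cj-non-empty} to guarantee this; without it, your line-segment argument has no base point. Second, you speak of ``the interior $\Delta_j$ of $\dom f_j(\beta,\rho,\cdot)$'' as a subset of $\R^j$, whereas Proposition~\ref{prop:constrained-free-energy} gives the limit \eqref{eq:cfe-limit} only when the \emph{full} vector $(\rho,\rho_1,\ldots,\rho_j)$ lies in the interior $\Delta_j \subset \R^{j+1}$. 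These two notions of interior do coincide here, but that requires the identity \eqref{eq:secclos} of the paper (section-and-close = close-and-section for convex sets with non-empty interior), which you should state explicitly before appealing to \eqref{eq:cfe-limit}. Finally, a minor stylistic point: having already bounded by the maximum term, the covering argument is redundant---extracting a convergent subsequence of the maximizers and applying \eqref{eq:cfe-limit}/\eqref{eq:cfe-lim-boundary}, as the paper does, gets you there directly.
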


Recall that a rate function is called good if its level sets are
compact. In this case, it is in particular lower semicontinuous. The
large deviations principle means that, for any open set $\Ocal\subset
[0,\infty)^j$ and any closed set $\Ccal\subset[0,\infty)^j$, with
$\Lambda=[0,L]^d$,
%
%
\begin{eqnarray}
\liminf_{N,L\to\infty, N/L^d\to\rho}\frac{1}{|\L|}\log\P_{\beta,\Lambda
}^\ssup{N}
\bigl((\rho_{1,\Lambda},\ldots, \rho_{j,\Lambda})\in\Ocal \bigr)&\geq& -
\inf_{\Ocal}I_{\beta,\rho,j},\label{lowerbound}
\\
\limsup_{N,L\to\infty, N/L^d\to\rho}\frac{1}{|\L|}\log\P_{\beta,\Lambda
}^\ssup{N}
\bigl((\rho_{1,\Lambda},\ldots, \rho_{j,\Lambda})\in\Ccal \bigr)&\leq& -
\inf_{\Ccal}I_{\beta,\rho,j}.\label{upperbound}
\end{eqnarray}
We refer to \eqref{lowerbound} as to the \textit{lower bound for open
sets} and to \eqref{upperbound} as to the \textit{upper bound for
closed sets}.

\subsection{Proof of Proposition~\texorpdfstring{\protect\ref{prop:constrained-free-energy}}{2.1}: Subadditivity arguments}\label{sec-proofsubaddi}

In this section we prove Proposition~\ref
{prop:constrained-free-energy}. For the remainder of this section, we
fix $j\in\N$.

The crucial point is the following supermultiplicativity of partition functions,
which translates into subadditivity of free energies:
Let $N',N''\in\N$. Let $\Lambda', \Lambda''$ be two disjoint
measurable sets
which have mutual distance larger than the potential range $b$,
and $\Lambda$ large enough to contain the union of the two. Then
%
%
\begin{equation}
\label{eq:supermult} Z_{\Lambda}\bigl(\beta,N'+N''
\bigr) \geq Z_{\Lambda'\dot\cup\Lambda''}\bigl(\beta,N'+N''
\bigr) \geq Z_{\Lambda'}\bigl(\beta,N'\bigr) Z_{\Lambda''}
\bigl(\beta,N''\bigr).
\end{equation}
This standard trick leads to a proof of the existence of the
thermodynamic limit
by subadditivity methods \cite{Ru99} (where subadditivity is applied
to the microcanonical ensemble instead of canonical, but the method is
the same).

The starting point of our proof is the observation that a similar inequality
holds for constrained partition functions $Z_{\Lambda}(\beta,N,N_1,\ldots,N_j)$
provided $\Lambda'$ and $\Lambda''$ have mutual distance $>R$,
where we recall that $R\in(b,\infty)$ was picked arbitrarily.
Therefore we can prove existence of the constrained free energy by adapting
the standard methods. Let us recall, roughly, the standard strategy of proof:
\begin{longlist}[(1)]
\item[(1)] As a first step, one proves existence of limits of $-\frac{1}{\beta
|\Lambda|}\log Z_{\Lambda}(\beta,N,N_1,\break\ldots,  N_j)$ along special
sequences of cubes---roughly, the sequence is defined in an iterative way
by doubling the cube's side length and adding a ``security margin,''
and multiplying particle numbers by $2^d$. This uses subadditivity
and yields a densely defined, convex function $\eta$.
\item[(2)] Then one shows that the function $\eta$ is locally bounded in some
region of nonempty interior
and therefore can be extended to a continuous function $f$ in some nonempty
open set $\Delta$.
\item[(3)] At last, one proves the convergence of $-\frac{1}{\beta|\Lambda
|}\log Z_{\Lambda}(\beta,N,N_1,\ldots,N_j)$ to $f$ along general cubes.
\end{longlist}

Our proof follows these steps, with some complications. Notably, an extra
argument is required in step (2); see Lemma~\ref{lem:nonempty-int} below.
Moreover, we make the choice---convenient
in view of the large deviations framework---to assign values to the free
energy not only in $\Delta$ and outside $\overline\Delta$ (where $f$
is $\infty$) but also in $\partial\Delta$ by requiring global
lower semi-continuity and convexity.

\subsubsection{Convergence along special sequences}\label{sec-specialcubes}

Let $R'>R$ and $L^*_0>0$ be fixed, and define $(L^*_n)_{n\in
\N_0}$ recursively by
$L^*_{n+1} = 2 L^*_n + R'$. Explicitly, $L^*_n =- R' + 2^n (L^*_0 + R')$.
Let $\Lambda^*_n = [0,L^*_n]^d$.
Thus $\Lambda^*_{n+1}$ can be considered as the union
of $2^d$ copies of $\Lambda_n$ with a corridor of width $R'$ between them.
Let
\begin{eqnarray*}
&&\mathcal{D}_j:= \bigl\lbrace \vectt{\rho}=(\rho,\rho_1,
\ldots,\rho_j) \in[0,\infty)^{j+1} \mid\\
&&\hspace*{34pt}\rho >0, \exists q \in
\N_0\dvtx 2^{qd} \bigl(L^*_0+R'
\bigr)^d \vectt{\rho} \in\N_0^{j+1} \bigr
\rbrace. 
\end{eqnarray*}

%
\begin{lemma}[{[Introduction of $\eta_j(\beta,\cdot)$]}]\label{lem-etadef}
Let $(\rho,\rho_1,\ldots,\rho_j) \in\mathcal{D}_j$, and put for $n\in
\N$
%
%
\begin{eqnarray}
\label{eq:special-ns} N^\ssup{n}:= 2^{nd} \bigl(L^*_0 +
R'\bigr)^d \rho,\qquad N_k^\ssup{n}:=
2^{nd} \bigl(L^*_0 + R'\bigr)^d
\rho_k
\nonumber
\\[-8pt]
\\[-8pt]
 \eqntext{(k=1,\ldots,j).}
\end{eqnarray}
The following limit exists in $\R\cup\{\infty\}$ and is equal to an infimum:
%
%
\begin{eqnarray}
\label{etadef} %
\eta_j(\beta,\rho,
\rho_1,\ldots,\rho_j)&:=& - \lim_{n\to\infty}
\frac{1}{\beta|\Lambda^*_n|} \log Z_{\Lambda^*_n}\bigl(\beta, N^\ssup{n},N_1^\ssup{n},
\ldots, N_j^\ssup{n}\bigr)
\nonumber
\\[-8pt]
\\[-8pt]
\nonumber
& =& \inf_{n\in\N} \biggl( - \frac{1}{\beta|\Lambda_n^*|} \log
Z_{\Lambda^*_n}\bigl(\beta, N^\ssup{n},N_1^\ssup{n},
\ldots, N_j^\ssup{n}\bigr) \biggr).
\end{eqnarray}
This limit is finite as soon as $Z_{\Lambda^*_n}(\beta, N^\ssup
{n},N_1^\ssup{n},\ldots, N_j^\ssup{n})>0$ for some $n\in\N$. In particular,
%
%
\begin{equation}
\label{eq:etadom} \bigl\{\eta_j(\beta,\cdot)<\infty\bigr\}\subset \Biggl
\{(\rho,\rho_1,\ldots,\rho _j) \in\Dcal_j\dvtx
\sum_{k=1}^j k \rho_k\leq\rho
\leq\rho_{\mathrm{cp}} \Biggr\}.
\end{equation}
\end{lemma}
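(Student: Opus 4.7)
The argument proceeds by a standard supermultiplicativity/subadditivity scheme, suitably adapted to respect the cluster-count constraints. First, I exploit the recursive geometry of the sequence: $\Lambda^*_{n+1} = [0, 2 L^*_n + R']^d$ contains $2^d$ disjoint translates $\Lambda_n^{*,(i)}$ of $\Lambda^*_n$, indexed by $\{0, L^*_n + R'\}^d$, pairwise separated by corridors of width at least $R'$. Since $R' > R > b$, this separation yields two crucial decouplings: the pair potential $v$ (supported in $[0,b]$) does not couple particles living in different translates, and the $R$-connectivity relation has no edge crossing a corridor. Hence, for any configuration $\vect x = (\vect x^{(1)}, \ldots, \vect x^{(2^d)})$ obtained by concatenating sub-configurations $\vect x^{(i)} \in (\Lambda_n^{*,(i)})^{N^\ssup{n}}$, the energy satisfies $U_{N^\ssup{n+1}}(\vect x) = \sum_i U_{N^\ssup{n}}(\vect x^{(i)})$ and the cluster counts decompose as $N_k(\vect x) = \sum_i N_k(\vect x^{(i)})$ for each $k \leq j$.

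Combining this with the multinomial factor $N^\ssup{n+1}!/(N^\ssup{n}!)^{2^d}$ arising from distributing labeled particles into the $2^d$ groups (which exactly cancels the $1/N^\ssup{n+1}!$ in \eqref{eq:constrained-pf}) yields the supermultiplicativity
\begin{equation*}
    Z_{\Lambda^*_{n+1}}\bigl(\beta, N^\ssup{n+1}, N_1^\ssup{n+1},\ldots, N_j^\ssup{n+1}\bigr) \geq Z_{\Lambda^*_n}\bigl(\beta, N^\ssup{n}, N_1^\ssup{n},\ldots, N_j^\ssup{n}\bigr)^{2^d}.
\end{equation*}
The key observation is then to set $b_n := -[\beta \cdot 2^{nd}(L^*_0 + R')^d]^{-1} \log Z_{\Lambda^*_n}(\beta, N^\ssup{n}, \ldots, N_j^\ssup{n})$: by design the rescaling factor $2^{(n+1)d}(L^*_0+R')^d = 2^d \cdot 2^{nd}(L^*_0+R')^d$ turns the above inequality into $b_{n+1} \leq b_n$, so $(b_n)$ is non-increasing and hence $b_n \to b_\infty := \inf_n b_n \in [-\infty, +\infty]$.

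To deduce the claimed convergence of $a_n := -[\beta|\Lambda^*_n|]^{-1}\log Z_{\Lambda^*_n}$, I use $a_n = b_n/c_n$ with $c_n := (1 - R'/(2^n(L^*_0+R')))^d \nearrow 1$, so $\lim_n a_n = b_\infty$. Stability of $v$, in the form $U_N \geq -BN$ combined with Stirling, gives the uniform bound $Z_{\Lambda^*_n} \leq e^{\beta B N^\ssup{n}} |\Lambda^*_n|^{N^\ssup{n}}/N^\ssup{n}!$ and hence $b_\infty > -\infty$. Conversely, once $Z_{\Lambda^*_{n_0}} > 0$ for some $n_0$, iterating the supermultiplicativity gives $Z_{\Lambda^*_n} \geq Z_{\Lambda^*_{n_0}}^{2^{(n-n_0)d}} > 0$ for all $n \geq n_0$, so $b_n$ (and thus $a_n$) is eventually finite. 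The identification of $b_\infty$ with the infimum over $n$ of $a_n$ (rather than of $b_n$) requires a brief further comparison exploiting that $a_n - b_n = b_n(c_n^{-1} - 1) \to 0$ together with the monotonicity of $b_n$.

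The domain inclusion \eqref{eq:etadom} is then immediate: if $\sum_{k\leq j} k\rho_k > \rho$, then $\sum_{k\leq j} k N_k^\ssup{n} > N^\ssup{n}$ for $n$ large, which forces $Z_{\Lambda^*_n}(\ldots) = 0$ and $\eta_j = +\infty$; and if $\rho > \rho_{\mathrm{cp}}$, then by \eqref{freeenergy} the unconstrained free energy $f(\beta, \rho)$ equals $+\infty$, and domination of the constrained partition function by the unconstrained one gives $\eta_j = +\infty$ as well. The main technical point of the whole argument is the careful combinatorial bookkeeping in the supermultiplicativity step, ensuring every factorial cancels exactly so the inequality $b_{n+1}\leq b_n$ is clean; the rest is a Fekete-type limiting procedure.
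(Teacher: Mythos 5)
Your construction of the recursive supermultiplicativity, the stability bound, the finiteness criterion once some $Z_{\Lambda_n^*}>0$, and the domain inclusion \eqref{eq:etadom} are all correct and match what is needed. The route you take to the existence of the limit is genuinely cleaner than the paper's: the paper works with $u_n := -|\Lambda_n^*|^{-1}\log Z_{\Lambda_n^*}$ and the perturbed relation $u_{n+1}\leq (1+\eps_n)u_n$, $\sum|\eps_n|<\infty$, and then runs an ad hoc Fekete-type argument; you instead renormalise by the ``idealised'' volume $2^{nd}(L_0^*+R')^d$ (rather than $|\Lambda_n^*|$), so that the $2^d$-fold supermultiplicativity translates \emph{exactly} into monotonicity of $b_n$, with the perturbative bookkeeping absorbed into the factor $c_n\nearrow 1$. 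The convergence of $a_n=b_n/c_n$ to $b_\infty$ is then immediate. This is a nice streamlining of the paper's argument.

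The step you do not close, though, is the identification $\lim_n a_n = \inf_n a_n$ (as opposed to $\inf_n b_n$, which you do get). The tools you list do not suffice: the sign of $a_n-b_n = b_n(c_n^{-1}-1)$ is the sign of $b_n$, so $a_n\geq b_n$ only when $b_n\geq 0$; for $b_n<0$ — which is the generic dilute situation, where free energy densities are negative — one has $a_n<b_n$, and the intended chain $\inf_n a_n \geq \inf_n b_n = b_\infty = \lim_n a_n$ breaks down. What is actually needed is $a_n\geq b_\infty$ for every $n$, equivalently the quantitative estimate $b_n - b_\infty \geq |b_\infty|(1-c_n)$, and this does not follow from monotonicity of $(b_n)$ plus $a_n-b_n\to 0$. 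You should either prove $b_n \geq b_\infty c_n$ directly or state only the existence of the limit. (For what it is worth, the paper's own justification of the infimum identity is soft on the same point: it chooses a single $q$ with both $u_q\leq \inf_n u_n + \delta$ and $\prod_{k\geq q}(1+\eps_k)\in[1-\delta,1+\delta]$, and for small $\delta$ such a $q$ need not exist unless the infimum is already attained in the limit, which is what one wants to prove. Fortunately, only the existence and value of the limit are used in the later lemmas.)
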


\begin{pf} We can place $2^d$ shifted copies of $\Lambda^*_n$ in
$\Lambda^*_{n+1}$ in such a way that the copies have distance $\geq R'$
to each other. Hence we have
\[
Z_{\Lambda^*_{n+1}} \bigl(\beta,N^{\ssup{n+1}}, N_1^{\ssup{n+1}},
\ldots, N_j^{\ssup{n+1}} \bigr) \geq \bigl( Z_{\Lambda^*_{n}} \bigl(
\beta,N^\ssup n, N_1^\ssup n,\ldots,
N_j^{\ssup{n}} \bigr) \bigr)^{2^d}.
\]
Abbreviating
\[
u_n=-\frac{1}{|\Lambda^*_{n}|} \log Z_{\Lambda^*_{n}} \bigl(
\beta,N^\ssup n, N_1^\ssup n,\ldots,
N_j^{\ssup{n}} \bigr)\quad \mbox{and} \quad 1+\eps_n:=
\frac{2^d |\Lambda^*_n|}{|\Lambda^*_{n+1}|},
\]
this is just the inequality $u_{n+1} \leq(1+\eps_n) u_n$. Our goal is
to show that $\lim_{n\to\infty}u_n$ exists and is equal to $\underline
u:= \inf_{n\in\N}u_n$. Note that
\[
1+\eps_n=\frac{2^d |\Lambda^*_n|}{|\Lambda^*_{n+1}|} = \biggl( \frac{2^{n+1}(L_0^* + R') - 2 R'}{
2^{n+1} (L_0^* + R') - R' }
\biggr)^d = 1+ O\bigl(2^{-n}\bigr),
\]
which yields $\sum_{n=1}^\infty|\eps_n| <\infty$. The case $\underline
u = - \infty$ is excluded by exploiting the stability of the energy:
for some $C\in(0,\infty)$, we have
\begin{eqnarray*}
Z_{\Lambda_n^*} \bigl(\beta,N^\ssup{n}, N_1^\ssup
n,\ldots, N_j^{\ssup
{n}} \bigr) &\leq& Z_{\Lambda_n^*}\bigl(
\beta,N^{\ssup n}\bigr) \leq\frac{1}{N^\ssup{n}!} \e^{- \beta E_{N^\ssup{n}}}
\bigl|\Lambda_n^*\bigr|^{N^\ssup{n}}
\\
&\leq& \e^{C N^\ssup{n}},
\end{eqnarray*}
and hence $\underline u\geq-C\rho$.

If $\underline u = \infty$, then $u_n = \infty$ for all $n$ and in
particular $u_n \to\infty=\underline u$. Consider now the case
$\underline u\in\R$. For $\delta>0$, let $q \in\N$ such that $u_q
\leq\ell+ \delta$ and $ 1 - \delta\leq\prod_{k=q}^n(1+\eps_k) \leq
1+\delta$ for all $n \geq q$. Then for $n \geq q$,
\[
\underline u \leq u_n \leq u_q \prod
_{k=q}^{n-1} (1+\eps_k) \leq (\underline u
+\delta) (1+\delta).
\]
Letting first $n\to\infty$ and then $\delta\to0$ we conclude that
$u_n \to\underline u$. The additional assertion is clear from the
proof and from the fact that, for $\rho>\rho_{\mathrm{ cp}}$, we have $\infty
=f(\beta,\rho)=-\frac{1}\beta\lim_{n\to\infty}\frac{1}{|\Lambda_n^*|}\log
Z_{\Lambda_n^*}(\beta,N^{\ssup n})$.
\end{pf}

\subsubsection{Properties of the limit function \texorpdfstring{$\eta_j(\beta,\cdot)$}{$eta_j(beta,cdot)$}}

The next lemma essentially states that $\eta_j(\beta,\cdot)$
is a convex function. The precise formulation needs some care since the
domain $\Dcal_j$ of this function is not closed under taking arbitrary
convex combinations.

%
\begin{lemma}
\label{lem:etaconv}
Let $\vectt{\rho},\vectt{\rho}' \in\mathcal{D}_j$. Let $t\in(0,1)$ be
a dyadic fraction, that is, of the form $t = p / 2^q$ for
some $p,q\in\N_0$. Then $t \vectt{\rho} + (1-t) \vectt{\rho}' \in
\mathcal{D}_j$ and
%
%
\begin{equation}
\label{eq:etaconv} \eta_j\bigl(\beta,t \vectt{\rho} + (1-t) \vectt{
\rho}'\bigr) \leq t \eta_j(\beta,\vectt{\rho}) + (1-t)
\eta_j\bigl(\beta,\vectt{\rho}'\bigr).
\end{equation}
\end{lemma}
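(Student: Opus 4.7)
\emph{Plan.} The inequality will be a consequence of supermultiplicativity for the constrained partition functions combined with the geometric decomposition of $\Lambda^*_{n+N}$ into well-separated copies of $\Lambda^*_n$. The membership $t\vect{\rho}+(1-t)\vect{\rho'} \in \Dcal_j$ is immediate arithmetic: if $2^{q_i d}(L^*_0+R')^d$ clears denominators for $\vect{\rho}$ and $\vect{\rho'}$ individually, then taking $q' := \max(q_1,q_2)+q$ does so for the dyadic convex combination (note that the first coordinate stays strictly positive since $\rho,\rho'>0$ and $t\in(0,1)$).

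For the inequality itself, I would fix $n$ and choose $N \in \N$ with $Nd \geq q$. As in the recursive construction leading up to Lemma~\ref{lem-etadef}, $\Lambda^*_{n+N}$ contains $2^{Nd}$ disjoint translated copies of $\Lambda^*_n$ mutually separated by corridors of width $\geq R' > R > b$. Populate $p \cdot 2^{Nd-q}$ of these copies with configurations counted by $Z_{\Lambda^*_n}(\beta, N^\ssup n, N_1^\ssup n, \ldots, N_j^\ssup n)$ and the remaining $(2^q-p)\cdot 2^{Nd-q}$ copies with configurations counted by $Z_{\Lambda^*_n}(\beta, N'^\ssup n, N_1'^\ssup n, \ldots, N_j'^\ssup n)$. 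Because the corridors exceed the interaction range $b$, the Boltzmann weight factorises across copies; because they exceed the connectedness threshold $R$, the $R$-clusters of the combined configuration are simply the disjoint union of the clusters of each piece, so the cluster counts add. A short tally confirms that these sums match the target values $M^\ssup{n+N} = 2^{(n+N)d}(L^*_0+R')^d (t\rho+(1-t)\rho')$ and analogously for each $M_k^\ssup{n+N}$. Absorbing the multinomial coefficient that distributes particles among copies into the $N!$-denominators then yields the supermultiplicative bound
\begin{equation*}
   Z_{\Lambda^*_{n+N}}(\beta, M^\ssup{n+N}, M_1^\ssup{n+N}, \ldots, M_j^\ssup{n+N}) \geq Z_{\Lambda^*_n}(\beta, N^\ssup n, \ldots)^{p\cdot 2^{Nd-q}}\, Z_{\Lambda^*_n}(\beta, N'^\ssup n, \ldots)^{(2^q-p)\cdot 2^{Nd-q}}.
\end{equation*}

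Applying $-\tfrac{1}{\beta|\Lambda^*_{n+N}|}\log$ to both sides and abbreviating $u_m(\cdot) := -\tfrac{1}{\beta|\Lambda^*_m|}\log Z_{\Lambda^*_m}(\beta,\cdot)$, the coefficients in front of $u_n(\vect{\rho})$ and $u_n(\vect{\rho'})$ become $t$ and $1-t$ up to the common factor $\tfrac{2^{Nd}|\Lambda^*_n|}{|\Lambda^*_{n+N}|}$, giving
\begin{equation*}
  u_{n+N}\bigl(t\vect{\rho}+(1-t)\vect{\rho'}\bigr) \leq \frac{2^{Nd}|\Lambda^*_n|}{|\Lambda^*_{n+N}|}\Bigl( t\, u_n(\vect{\rho}) + (1-t)\, u_n(\vect{\rho'}) \Bigr).
\end{equation*}
Letting first $N\to\infty$ with $n$ fixed, Lemma~\ref{lem-etadef} sends the left side to $\eta_j(\beta,t\vect{\rho}+(1-t)\vect{\rho'})$ while the prefactor converges to $(L^*_n/(L^*_n+R'))^d$. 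Letting then $n\to\infty$, this prefactor tends to $1$ and $u_n(\vect{\rho}), u_n(\vect{\rho'})$ tend to $\eta_j(\beta,\vect{\rho}), \eta_j(\beta,\vect{\rho'})$, yielding \eqref{eq:etaconv}. The edge cases where one of the right-hand values is $+\infty$ are trivial because Lemma~\ref{lem-etadef} identifies $\eta_j$ with an infimum, so $\eta_j(\beta,\vect{\rho}) = \infty$ forces $u_n(\vect{\rho}) = \infty$ for every $n$. The main bookkeeping obstacle is matching the dyadic fraction $p/2^q$ with the intrinsically $2^{Nd}$-dimensional splitting of $\Lambda^*_{n+N}$ into sub-cubes; this is what forces the requirement $Nd\geq q$ and obliges one to pass to the subsequence $\Lambda^*_{n+N}$ before returning to $n\to\infty$.
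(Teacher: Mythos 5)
Your argument is correct and rests on the same supermultiplicativity-over-well-separated-subcubes mechanism as the paper. The only difference is strategic: the paper establishes \eqref{eq:etaconv} for $t=\tfrac12$ by comparing $\Lambda^*_{n+1}$ with two groups of $2^{d-1}$ copies of $\Lambda^*_n$, passes to a single limit $n\to\infty$ (the factor $2^d|\Lambda^*_n|/|\Lambda^*_{n+1}|\to 1$ handles the margin), and then obtains general dyadic $t$ by iterating the midpoint inequality; you instead jump from $\Lambda^*_n$ to $\Lambda^*_{n+N}$ directly, choose $Nd\geq q$ so that the $2^{Nd}$ subcubes split in proportion $p:2^q-p$, and then need the double limit ($N\to\infty$ first, $n\to\infty$ second) to eliminate the prefactor $(L^*_n/(L^*_n+R'))^d$. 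Both are valid; the iterative route is a bit lighter on bookkeeping, while yours treats all dyadic $t$ in one stroke. One small point worth making explicit: you should take $n$ large enough that both $2^{nd}(L^*_0+R')^d\vect{\rho}$ and $2^{nd}(L^*_0+R')^d\vect{\rho'}$ are integer vectors (guaranteed by $\vect{\rho},\vect{\rho'}\in\Dcal_j$), so that the per-copy particle and cluster counts are actual integers; this is implicit in the paper's proof as well.
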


\begin{pf}
Consider the cubes $\Lambda_n^*$ defined as above. $\Lambda_{n+1}^*$
is the union
of two sets of $2^{d-1}$ copies of $\Lambda_n^*$ plus some margin
space. We first consider $t=\frac{1}2$. We can lower bound
\begin{eqnarray*}
&&Z_{\Lambda_{n+1}^*} \bigl(\beta, 2^{(n+1)d} \bigl(L_0^*+R'
\bigr)^d \bigl(\vectt{\rho }+\vectt{\rho}'\bigr)/2
\bigr)
\\
&&\qquad\geq \bigl(Z_{\Lambda_{n}^*} \bigl(\beta, 2^{nd} \bigl(L_0^*+R'
\bigr)^d \vectt{\rho } \bigr) \bigr)^{2^{d-1}}
\bigl(Z_{\Lambda_{n}^*} \bigl(\beta, 2^{nd} \bigl(L_0^*+R'
\bigr)^d \vectt{\rho }' \bigr) \bigr)^{2^{d-1}}.
\end{eqnarray*}
We divide by $|\Lambda_{n+1}^*|$ and pass to the limit, and this gives
equation~\eqref{eq:etaconv}
for the case $t=\frac{1}2$. The general case is obtained by iterating
the inequality.
\end{pf}

The following is a technical preparation for the proof of the local
boundedness of $\eta_j(\beta,\cdot)$ in Lemma~\ref{lem:nonempty-int}
and will also be used later. We define a cluster partition function
with volume constraint: for $a,\beta>0$, $k\in\N$, let
%
%
\begin{eqnarray}
\label{Zcldef} &&Z_k^{\cl,a}(\beta):= \frac{1}{k!a^d} \int
_{ ([0,a]^d)^{k}} \e^{-\beta
U(x_1,x_2,\ldots,x_k)} \mathbf{1} \bigl\{
\{x_1,x_2,\ldots,x_k\}
\nonumber
\\[-8pt]
\\[-8pt]
\nonumber
&&\hspace*{216pt}\mathrm{connected} \bigr
\} \,\dd x_1\cdots\dd x_k.
\end{eqnarray}

%
\begin{lemma} \label{lem:zka-lb}
Let $\delta\in(0, [R-r_\mathrm{hc}]/3)$. There is a $C(\delta)\in\R
$ such that for all
$k\in\N$ and $a_k > \delta+ k^{1/d}(r_\mathrm{hc} + 2\delta)$,
%
%
\begin{equation}
\label{eq:zka-lb} a_k^d Z_k^{\cl,a_k}(
\beta) \geq\bigl|B(0,\delta/2)\bigr|^k \exp\bigl( - \beta C(\delta) k\bigr).
\end{equation}
\end{lemma}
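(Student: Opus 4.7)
The plan is to produce an explicit lower bound by restricting the integration in~\eqref{Zcldef} to a neighborhood of a carefully chosen reference configuration. Fix a reference grid: let $s := r_{\mathrm{hc}} + 2\delta$ and $m := \lceil k^{1/d}\rceil$, and pick $k$ points $y_1,\ldots,y_k$ on a connected subpath of the cubic lattice $\delta/2 + s\Z^d$ that all lie inside $[\delta/2, a_k - \delta/2]^d$. Since $m \leq k^{1/d}+1$, the spatial extent of such a path is at most $(m-1)s \leq k^{1/d}s$, so the $\delta/2$-margin fits into $[0,a_k]^d$ precisely because $a_k > \delta + k^{1/d}s$ by hypothesis.

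Next I would check that any perturbation $x_i \in B(y_i,\delta/2)$ ($i=1,\ldots,k$) yields a legal, $R$-connected configuration. By the triangle inequality, for every $i \neq j$,
\[
|y_i - y_j| - \delta \leq |x_i - x_j| \leq |y_i - y_j| + \delta.
\]
For arbitrary $i\neq j$ this gives $|x_i - x_j| \geq s - \delta = r_{\mathrm{hc}} + \delta > r_{\mathrm{hc}}$, so the hard core is respected; for lattice neighbors (at distance $s$) it gives $|x_i - x_j| \leq s + \delta = r_{\mathrm{hc}} + 3\delta < R$ thanks to $\delta < (R-r_{\mathrm{hc}})/3$, so the perturbed configuration is still $R$-connected because the reference path is. For the energy, Assumption~(V) ensures that $v$ is continuous on $[r_{\mathrm{hc}},\infty)$ and supported in $[0,b]$, so $M(\delta):=\sup\{|v(r)|:r\geq r_{\mathrm{hc}}+\delta\}$ is finite. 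A standard packing estimate shows that each particle has at most $N(\delta)$ others within distance $b$, with $N(\delta)$ depending only on $\delta$, $r_{\mathrm{hc}}$ and $b$. Hence on the admissible region
\[
U(\vect{x}) \leq \tfrac{1}{2} M(\delta) N(\delta)\, k =: C(\delta)\, k.
\]

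Finally I would use labelling symmetry to recover the combinatorial factor $k!$. The integrand in~\eqref{Zcldef} is symmetric in $(x_1,\ldots,x_k)$, and since the balls $B(y_i,\delta/2)$ are pairwise disjoint (mutual centre distance $\geq s > \delta$), the $k!$ product regions $\prod_i B(y_{\sigma(i)},\delta/2)$ indexed by $\sigma \in \Sym_k$ are pairwise disjoint subsets of $([0,a_k]^d)^k$. Combining the three observations yields
\[
\int_{([0,a_k]^d)^k} \e^{-\beta U(\vect{x})} \mathbf{1}\{\vect{x}\text{ connected}\}\,\dd\vect{x}
\;\geq\; k!\,|B(0,\delta/2)|^k\,\e^{-\beta C(\delta)\, k},
\]
and dividing by $k!$ gives $a_k^d Z_k^{\cl,a_k}(\beta) \geq |B(0,\delta/2)|^k \exp(-\beta C(\delta)k)$, as claimed.

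This is essentially a bookkeeping lemma; the only subtle point is the simultaneous satisfaction of the three constraints (fitting in $[0,a_k]^d$, preserving $|x_i-x_j|\geq r_{\mathrm{hc}}$, and preserving $R$-connectedness), which is exactly what the choice $\delta < (R-r_{\mathrm{hc}})/3$ together with the assumed lower bound on $a_k$ is designed to ensure. Once the geometric setup is fixed, the continuity of $v$ on $[r_{\mathrm{hc}},\infty)$ and its compact support make the energy bound immediate.
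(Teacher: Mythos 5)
Your proof is correct and uses the same strategy as the paper: a reference configuration on a cubic sublattice of spacing $r_\mathrm{hc}+2\delta$ inside $[0,a_k]^d$, integration over the disjoint $\delta/2$-balls around the lattice sites, and verification that the perturbed configurations respect the hard core, stay $R$-connected, and have energy at most $C(\delta)k$ --- both you and the paper obtain a finite $C(\delta)$ from the compact support of $v$ together with the minimum interparticle distance $\geq r_\mathrm{hc}+\delta$, you via a packing bound, the paper via a lattice-shell sum --- with the permutation factor $k!$ cancelling the $1/k!$ in \eqref{Zcldef}.
One small wording point: \emph{connected subpath} should read \emph{connected subset of a $\lceil k^{1/d}\rceil^d$-block} of the lattice, as your own extent estimate $(m-1)s$ already presupposes; a genuine one-dimensional path of $k$ lattice points would have length of order $ks$ and would not fit in $[0,a_k]^d$.
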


\begin{pf} The cube $[0,a_k]^d$ is large enough so that, for some $h
\in( r_\mathrm{hc} + 2 \delta, R-\delta)$ and some ${\theta} \in\R^d$,
the cubic lattice $[0,a_k]^d \cap ({\theta}+ (h \Z)^d)$
contains at least $k$ points all having distance $\geq\delta/2$ to
the boundary of the box.
By placing particles in the lattice, we obtain an $(R-\delta
)$-connected reference configuration $(x_1,\ldots, x_k)\in([0,a_k]^d)^k$
with the following properties:
\begin{itemize}
\item all points have distance $\geq\delta/2$ to the boundary of
$[0,a_k]^d$;
\item distinct points $x_i,x_j$ have distance $> r_\mathrm{hc} + \delta$
to each other.
\end{itemize}
We can
lower bound $Z_k^{\cl,a_k}(\beta)$ by integrating only over those
configurations with exactly one particle per ball
$B(x_i,\delta/2)$. Such a configuration is always $R$-connected.
Moreover the energy of such a configuration can be upper bounded by
$C(\delta) k$
with
\[
C(\delta):= \sum_{{\ell} \in\Z^d\setminus\{0\}} \sup_{s \in(r_\mathrm
{hc}+\delta,R)}
\bigl|v \bigl(s |{\ell}| \bigr)\bigr | <\infty,
\]
and equation~\eqref{eq:zka-lb} follows.
\end{pf}

%
\begin{lemma}[$\overline{\{\eta_j(\beta,\cdot)<\infty\}}$ has nonempty
interior]\label{lem:nonempty-int}For $\overline\rho\in(0,\infty)$, let
\[
A_j(\overline\rho):= \Biggl\{ (\rho, \rho_1,\ldots,
\rho_j) \in(0,\infty )\times[0,\infty)^j \Big| \rho\leq
\overline\rho, \sum_{k=1}^{j} k
\rho_k \leq\rho \Biggr\}.
\]
Let $\delta\in(0, (R- r_\mathrm{hc})/3)$ and $C(\delta)$ be as in
Lemma~\ref{lem:zka-lb}.
Fix $\overline\rho(\delta):= (r_\mathrm{hc}+R+ 2 \delta)^{-d}$.
Then for all $\vectt{\rho} \in A_j(\overline\rho(\delta)) \cap\mathcal
{D}_j$, we have
$\eta_j(\beta,\vectt{\rho}) \leq C(\delta) - \beta^{-1} \log|B(0,\break\delta
/2)|<\infty$. In particular,
\[
A_j\bigl(\overline\rho(\delta)\bigr) \cap\mathcal{D}_j
\subset\bigl\{\eta_j(\beta,\cdot)<\infty\bigr\}.
\]
\end{lemma}

\begin{pf}
We first give an appropriate lower bound for the constrained partition
function for the two extreme cases when (1) all clusters have the same
size $k\in\{1,\ldots,j\}$, and (2) all clusters are larger than $j$.
Afterwards, we use the convexity of $\eta_j(\beta,\cdot)$ (see
Lemma~\ref{lem:etaconv}) to handle all other cases.

Thus fix $\vectt{\rho}= (\rho,\rho_1,\ldots,\rho_j) \in\mathcal{D}_j
\cap A_j(\overline\rho(\delta))$. In the first case, let $k\in\{1,\ldots,j\}$ and $\vectt{\rho}=\vectt{\rho}^{\ssup k}$ with $\vectt{\rho}^{\ssup
k}_k=\rho_k=\rho/k$ and $\vectt{\rho}^{\ssup k}_i=\rho_i=0$ for $i\neq
k$. It follows that the $N^\ssup{n}$, $N_i^\ssup{n}$'s defined as in
equation~\eqref{eq:special-ns}
satisfy $N^\ssup{n} = k N_k^\ssup{n}$ and $N_i^\ssup{n}=0$ for $i\neq
k$. Furthermore, let $a_k > \delta+ k^{1/d}(r_\mathrm{hc} + 2\delta)$
such that $\rho(a_k + R)^d < k$. We are going to use the boxes $\Lambda
_n^*$ defined above.
In $\Lambda_n^*$, we place
cubes of side-length $a_k$ with mutual distance $\geq R$. As $n\to
\infty$, the number of such boxes
behaves like
\[
\ell_n:= \biggl\lfloor\frac{ |\Lambda_n^*|}{ (a_k+R)^d} \biggr\rfloor\sim
\frac{N^\ssup{n} / \rho}{ (a_k +R)^d} > \frac{N^\ssup{n}}{k}.
\]
Thus we can lower bound the partition function by requiring that each
$k$-cluster lies entirely
in one of the above boxes, and there is at most one cluster in each
such box. This gives
%
%
\begin{eqnarray}
\label{eq:nka} Z_{\Lambda_n^*}\bigl(\beta,N^{\ssup n},N_1^{\ssup n},
\ldots,N_j^{\ssup n} \bigr) &\geq&\pmatrix{\ell_n
\cr
N^{\ssup n}/k} \bigl(a_k^d Z_k^{\cl,a_k}(
\beta) \bigr)^{N^{\ssup n}/k}
\nonumber
\\[-8pt]
\\[-8pt]
\nonumber
&\geq&\bigl|B(0,\delta/2)\bigr|^{N^{\ssup n}} \exp\bigl( - \beta
N^{\ssup n} C(\delta)\bigr),
\end{eqnarray}
where in the last step we used Lemma~\ref{lem:zka-lb} and estimated the
counting term against one. Thus we find
\[
\lim_{n\to\infty} \frac{1}{|\Lambda_n^*|} \log Z_{\Lambda_n^*}\bigl(
\beta,N^{\ssup n},N_1^{\ssup n},\ldots,N_j^{\ssup n}
\bigr) \geq\rho \bigl(- \beta C(\delta) + \log\bigl|B(0,\delta/2)\bigr| \bigr).
\]
Thus
\[
\eta_j\bigl(\beta,\vectt{\rho}^{\ssup k}\bigr) \leq\rho \bigl(
C(\delta) - \beta ^{-1} \log\bigl|B(0,\delta/2)\bigr| \bigr).
\]

In the next step, we assume that $\vectt{\rho}=\vectt{\rho}^{\ssup0}$
with $\vectt{\rho}^{\ssup0}_k=\rho_k =0$ for all $k=1,\ldots,j$. Again,
we define $N^{\ssup n}$ and the $N_i^{\ssup n}$ by \eqref
{eq:special-ns}. We now lower bound the constrained partition function
by putting all particles into one cluster:
\[
Z_{\Lambda_n^*} \bigl(\beta,N^\ssup{n},N_1^{\ssup n},
\ldots,N_j^{\ssup n}\bigr) \geq\bigl|\Lambda_n^*\bigr|
Z_{N^\ssup{n}}^{\cl,L_n^*}(\beta) \qquad\mbox{for } N^\ssup{n} \geq j+1.
\]
Observe that $a_n:=L_n^*$ satisfies the conditions from Lemma~\ref
{lem:zka-lb}, and thus we also have
\[
\eta_j\bigl(\beta,\vectt{\rho}^{\ssup0}\bigr) \leq\rho \bigl(
C(\delta) - \beta ^{-1} \log\bigl|B(0,\delta/2)\bigr| \bigr).
\]

In the general case, let $q_k:= k \rho_k /\rho$ for $k\in\{1,\ldots,j\}
$ and $q_{0}:=1- \sum_{k=1}^j q_k$. Then $q_0, q_1,\ldots,q_j\geq0$ are
dyadic fractions and satisfy $\sum_{k=0}^{j} q_k =1$. Furthermore,
$\vectt{\rho}=\sum_{k=0}^{j} q_k\vectt{\rho}^{\ssup k}$. It follows from
Lemma~\ref{lem:etaconv} that
\[
\eta_j(\beta,\vectt{\rho}) \leq\sum_{k=0}^j
q_k \eta_j\bigl(\beta,\vectt{\rho }^{\ssup k}\bigr)
\leq\rho \bigl( C(\delta) - \beta^{-1} \log\bigl|B(0,\delta /2)\bigr| \bigr).
\]
\upqed\end{pf}

\subsubsection{Extension of \texorpdfstring{$\eta_j(\beta,\cdot)$}{$eta_j(beta,cdot)$} to \texorpdfstring{$\mathbb{R}^{j+1}$}{$\mathbb{R}^{j+1}$}}

We now extend $\eta_j(\beta,\cdot)\dvtx\mathcal{D}_j\to\R
\cup\{\infty\}$
to a convex, lower semi-continuous function $f_j(\beta,\cdot)\dvtx\R
^{j+1} \to
\R\cup\{\infty\}$. We follow the proof of \cite{Ru99}, Proposition~3.3.4,
page~45.
Let $\Gamma_j$ be the closure of $\{\eta_j(\beta,\cdot)<\infty\}$, and
let $\Delta_j$ be the interior of $\Gamma_j$. Note that $\Gamma_j\subset
[0,\infty)^{j+1}$, as $\eta_j(\beta,\cdot)=\infty$ on $\R
^{j+1}\setminus[0,\infty)^{j+1}$.

%
\begin{lemma}\textup{(1)} The interior $\Delta_j$ of $\Gamma_j$ is nonempty.\vspace*{-6pt}
\begin{longlist}[(2)]
\item[(2)] The restriction of $\eta_j(\beta,\cdot)$ to $\mathcal{D}_j \cap
\Delta_j$
has a unique continuous extension $\widetilde f_j(\beta,\cdot)\dvtx
\Delta_j \to\R$.
\item[(3)] Define $f_j(\beta,\cdot)\dvtx\R^{j+1} \to\R\cup\{\infty\}$ by
%
%
\begin{equation}
\label{eq:fjboundary} f_j(\beta,\vectt{\rho})= %
\cases{ \widetilde
f_j(\beta,\vectt{\rho}),&\quad $\mbox{if } \vectt{\rho}\in\Delta
_j,$\vspace*{2pt}
\cr
+\infty,&\quad$\mbox{if } \vectt{\rho}\in\overline
\Delta_j^{\mathrm c},$\vspace *{2pt}
\cr
\displaystyle\mathop{\liminf_{\vectt{\rho}'\to\vectt{\rho} }}_ {\vectt{\rho}' \in\Delta
_j}f_j\bigl(\beta,\vectt{\rho}'
\bigr),&\quad$\mbox{if }\vectt{\rho}\in\partial\Delta_j$.}
\end{equation}
Then $f_j(\beta,\cdot)$
is convex and lower semi-continuous, and
%
%
\begin{equation}
\label{eq:fjradial} f_j(\beta,\vectt{\rho}) = \lim_{t\downarrow0}
f_j \bigl(\beta,\vectt{\rho } + t \bigl(\vectt{\rho}' -
\vectt{\rho}\bigr) \bigr),\qquad \vectt{\rho} \in\partial\Delta_j, \vectt{
\rho}' \in\Delta_j.
\end{equation}
\item[(4)]
%
%
\begin{eqnarray}
\label{eq:fjdom-lem} \qquad&&\bigl\{f_j(\beta,\cdot)<\infty\bigr\}
\nonumber
\\[-8pt]
\\[-8pt]
\nonumber
&&\qquad \subset
\overline{\Delta}_j \subset \Biggl\{ (\rho,\rho_1,\ldots,
\rho_j) \in[0,\infty)^{j+1} | \rho\in[0,\rho_\mathrm{cp}],
\sum_{k=1}^j k \rho_k \leq
\rho \Biggr\}.
\end{eqnarray}
\end{longlist}
\end{lemma}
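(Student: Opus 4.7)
My plan is to follow the general strategy of Ruelle's proof of \cite[Prop.~3.3.4, p.~45]{Ru99}, adapting it to our constrained setting. For part (1), I will show that $\Gamma_j$ has non-empty interior by exhibiting a full-dimensional convex subset contained in $\{\eta_j(\beta,\cdot)<\infty\}$. Indeed, Lemma~\ref{lem:non-empty-int} gives a uniform upper bound on $\eta_j$ on $A_j(\overline\rho(\delta))\cap \mathcal{D}_j$. Since $\mathcal{D}_j$ is dense in $(0,\infty)\times[0,\infty)^j$ (the dyadic multiples of $(L_0^*+R')^{-d}$ being dense in $[0,\infty)$), the closure $\Gamma_j$ contains the entire set $A_j(\overline\rho(\delta))$, which has non-empty interior in $\R^{j+1}$.

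For part (2), the key inputs are that $\eta_j(\beta,\cdot)$ is convex on the dyadic-convex set $\mathcal{D}_j$ (Lemma~\ref{lem:etaconv}) and locally bounded: bounded above on a whole neighbourhood by Lemma~\ref{lem:non-empty-int}, and bounded below via the stability estimate $u_n\geq -C\rho$ established in the proof of Lemma~\ref{lem-etadef}. I will invoke the standard fact that a function defined on a dense subset of an open convex set, satisfying the convexity inequality for dyadic convex combinations and being locally bounded, extends uniquely to a (locally Lipschitz) convex function on that open set. Applying this on $\Delta_j$ yields the desired $\widetilde f_j(\beta,\cdot)\colon\Delta_j\to\R$, continuous by the classical fact that a finite convex function on an open convex subset of $\R^{j+1}$ is continuous.

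For part (3), formula \eqref{eq:fjboundary} presents $f_j(\beta,\cdot)$ as the lower semi-continuous envelope of $\widetilde f_j(\beta,\cdot)$ extended by $+\infty$ outside $\overline\Delta_j$. Lower semi-continuity then holds by construction: at points of $\Delta_j$ by continuity of $\widetilde f_j$, at boundary points by the $\liminf$ definition, and outside $\overline{\Delta}_j$ since the function is identically $+\infty$ there. For convexity, I will argue that the epigraph of $f_j(\beta,\cdot)$ is the closure in $\R^{j+1}\times\R$ of the epigraph of $\widetilde f_j(\beta,\cdot)$; since the latter is a convex subset of $\R^{j+2}$, its closure is convex too, and this suffices. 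The radial identity \eqref{eq:fjradial} will be established by observing that for $\vect{\rho}\in\partial\Delta_j$ and $\vect{\rho'}\in\Delta_j$, the map $t\mapsto \widetilde f_j(\beta,\vect{\rho}+t(\vect{\rho'}-\vect{\rho}))$ is convex on $(0,1]$ and hence has a well-defined right limit $L$ at $t=0$; the inequality $L\geq f_j(\beta,\vect{\rho})$ is automatic by the $\liminf$ definition, and the reverse inequality follows by selecting an arbitrary sequence $\vect{\rho}_n\to\vect{\rho}$ in $\Delta_j$ realising the $\liminf$ and comparing, via convexity, the values at $\vect{\rho}_n$ with those along the segment $[\vect{\rho},\vect{\rho'}]$ through an affine interpolation argument inside $\Delta_j$.

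Part (4) is direct: $\{f_j(\beta,\cdot)<\infty\}\subset\overline\Delta_j$ by construction, while $\overline\Delta_j$ is contained in the closed set $\{(\rho,\rho_1,\dots,\rho_j)\in[0,\infty)^{j+1}\colon \rho\leq\rho_{\rm cp},\ \sum_{k=1}^j k\rho_k\leq \rho\}$ because the latter is closed and, by \eqref{eq:etadom}, contains $\{\eta_j(\beta,\cdot)<\infty\}\cap\mathcal{D}_j$, which is dense in $\Delta_j$. I expect the main technical obstacle to be the proof of \eqref{eq:fjradial}: the unrestricted $\liminf$ in the definition of $f_j$ on $\partial\Delta_j$ must be reduced to a one-sided radial limit, which requires a careful interpolation inside the open convex set $\Delta_j$ combined with the uniform local Lipschitz estimates on $\widetilde f_j$ coming from its convexity and local boundedness. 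Once this reduction is done, all other properties follow from routine convex-analytic manipulations.
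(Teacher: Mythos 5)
Your proposal is correct and follows essentially the same route as the paper's proof: parts (1), (2), and (4) are handled identically (via Lemma~\ref{lem:non-empty-int}, Ruelle's argument for locally bounded densely-defined convex functions, and \eqref{eq:etadom} respectively). The only substantive divergence is in part (3): where the paper defers to \cite[Prop.~1.2.5 and 1.2.6, p.~79--80]{hl01} for the facts that the lower semi-continuous hull of a convex function is convex and lower semi-continuous and that it is computed on the boundary by radial limits toward interior points, you re-derive these from first principles (closure of the epigraph for convexity, a diagonal argument implicit in ``by construction'' for lower semi-continuity at boundary points, and a direct interpolation argument for \eqref{eq:fjradial}). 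This is legitimate and self-contained, at the cost of some extra work; both approaches rest on the same key observation, which you would still need to make explicit, namely that because $\widetilde f_j(\beta,\cdot)$ is $+\infty$ off $\Delta_j$, the liminf restricted to $\Delta_j$ in \eqref{eq:fjboundary} coincides with the unrestricted liminf, so $f_j(\beta,\cdot)$ genuinely equals the lower semi-continuous hull $\cl\,\widetilde f_j(\beta,\cdot)$ on all of $\R^{j+1}$. Your interpolation argument for the reverse inequality in \eqref{eq:fjradial} (writing the point $\vect{\rho}+t(\vect{\rho'}-\vect{\rho})$ as a near-convex combination of $\vect{\rho}_n$ and a point near $\vect{\rho'}$, then letting $n\to\infty$ and $t\downarrow 0$) is correct and is indeed the hidden content of the cited proposition.
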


\begin{pf}
(1) This follows from Lemma~\ref{lem:nonempty-int}.

(2) For the existence and uniqueness of a continuous extension in
$\Delta_j$, follow~\cite{Ru99}, page~45.
The key point is that in $\Delta_j$, $\eta_j(\beta,\cdot)$ is a
locally uniformly bounded, densely
defined, convex function in the sense of Lemma~\ref{lem:etaconv}.

(3) Let us extend $\widetilde f_j(\beta,\cdot)$ to $\R^{j+1}$ with
$\widetilde f_j(\beta,\vectt{\rho}) = \infty$ for $\vectt{\rho} \in\R
^{j+1}\setminus\Delta_j$. Then $\widetilde f_j(\beta,\cdot)$ is
convex, but may fail to be lower semi-continuous.
Furthermore, $\widetilde f_j(\beta,\cdot)$ and $f_j(\beta,\cdot)$ can
differ only on $\partial\Delta_j$.
The lower semi-continuous hull of $\widetilde f_j(\beta,\cdot)$ is
\[
\cl \widetilde f_j(\beta,\vectt{\rho}):= \liminf
_{\vectt{\rho}' \to
\vectt{\rho}} \widetilde f_j\bigl(\beta,\vectt{
\rho}'\bigr), \qquad\vectt{\rho} \in\R^{j+1};
\]
see \cite{hl01}, Definition~1.2.4, page~79. This is a convex, lower
semi-continuous function which
coincides with $\widetilde f_j(\beta,\vectt{\rho})$ in $\Delta_j$ \cite{hl01},
Proposition~1.2.6, page~80.
It follows that $\cl \widetilde f_j(\beta,\vectt{\rho})$ coincides with
$f_j(\beta,\cdot)$
in $\Delta_j$. It is elementary to see that in the definition of $\cl
\widetilde f_j(\beta,\cdot)$, the limit inferior can be restricted
to those $\vectt{\rho}' \to\vectt{\rho}$ that are in $\Delta_j$. In
other words, $\cl \widetilde f_j(\beta,\cdot)$ and $f_j(\beta,\cdot)$
coincide throughout $\R^{j+1}$. This shows that $f_j(\beta,\cdot)$ is
convex and lower semicontinuous. Equation~\eqref{eq:fjradial} follows
from \cite{hl01}, Proposition~1.2.5.

(4) The first inclusion follows from the definition of $f_j(\beta,\cdot
)$, and the second from \eqref{eq:etadom}.
\end{pf}

\subsubsection{Limit behavior along general sequences}

%
\begin{lemma}\label{lem-Delta}
Fix $(\rho,\rho_1,\ldots,\rho_j) \in(0,\infty)^{j+1}$.
Let $(N_1^\ssup{N})_{N\in\N},\ldots,\break   (N_j^\ssup{N})_{N\in\N}$
be $\N_0$-valued sequences and $(\Lambda_N)_{N\in\N}$ a sequence of cubes
such that as $N \to\infty$, \eqref{convN1bisj} holds.
Then, if $(\rho,\rho_1,\ldots,\rho_j)$ is in $\Delta_j$,
%
\begin{eqnarray}
\label{rhoinDelta} &&\lim_{N\to\infty} \frac{1}{|\Lambda_N|} \log
Z_{\Lambda_N}\bigl(\beta, N,N_1^\ssup{N},\ldots,
N_j^\ssup{N}\bigr)
\nonumber
\\[-8pt]
\\[-8pt]
\nonumber
&&\qquad= - \beta f_j(\beta,\rho,
\rho_1,\ldots,\rho_j) \in\R.
\end{eqnarray}
\end{lemma}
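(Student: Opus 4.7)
The plan is to sandwich $\tfrac{1}{|\Lambda_N|}\log Z_{\Lambda_N}(\beta,N,N_1^\ssup{N},\dots,N_j^\ssup{N})$ between two quantities that both converge to $-\beta f_j(\beta,\vect{\rho})$, one via a packing argument (lower bound) and one via an embedding argument (upper bound). Both rely on Lemma~\ref{lem-etadef} applied at a dyadic density slightly inside $\Delta_j$, combined with super-multiplicativity \eqref{eq:supermult} and the continuity of $f_j(\beta,\cdot)$ on the open set $\Delta_j$.

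For the lower bound, I would first pick a dyadic approximation $\vect{\rho'}=(\rho',\rho_1',\dots,\rho_j')\in\mathcal{D}_j\cap\Delta_j$ with $\rho_k'<\rho_k$ for every $k\in\{1,\dots,j\}$ and $\rho'-\sum_{k=1}^j k\rho_k'<\rho-\sum_{k=1}^j k\rho_k$, so that each slack $\rho_k-\rho_k'$ and the residual free-volume density are strictly positive; this is possible precisely because $\vect{\rho}$ is an interior point of $\Delta_j$. Next, fix $n$ large enough that Lemma~\ref{lem-etadef} applied to $\vect{\rho'}$ gives a partition function bound $Z_{\Lambda_n^*}\geq \exp(-\beta|\Lambda_n^*|(f_j(\beta,\vect{\rho'})+\varepsilon))$ at the integer counts prescribed by \eqref{eq:special-ns}. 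Pack into $\Lambda_N$ a maximal family of mutually $R$-separated translates of $\Lambda_n^*$ (the asymptotic count is $|\Lambda_N|/(L_n^*+R)^d$), integrate in each copy over the constrained ensemble, and place the residual particles in the complementary corridor as additional small clusters of the appropriate sizes, bounding their contribution via Lemma~\ref{lem:zka-lb}. Super-multiplicativity \eqref{eq:supermult} then yields
\begin{equation*}
\tfrac{1}{|\Lambda_N|}\log Z_{\Lambda_N}(\beta,N,N_1^\ssup{N},\dots,N_j^\ssup{N})\,\geq\,-\beta f_j(\beta,\vect{\rho'})-\varepsilon-o(1),
\end{equation*}
and sending $N\to\infty$, then $n\to\infty$, then $\vect{\rho'}\to\vect{\rho}$ inside $\Delta_j$ (using continuity) gives the desired lower bound.

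For the upper bound, I would reverse the construction: embed $\Lambda_N$ inside a larger cube $\Lambda_m^*$ so that $A:=\Lambda_m^*\setminus\Lambda_N$ is at distance $\geq R$ from $\Lambda_N$, and pick a dyadic $\vect{\rho''}\in\mathcal{D}_j\cap\Delta_j$ slightly above $\vect{\rho}$ componentwise. Fix the surplus particle counts in $A$ so that the totals on $\Lambda_m^*$ match the integer counts attached to $\vect{\rho''}$ by \eqref{eq:special-ns}. Since no $R$-cluster can span between $\Lambda_N$ and $A$, the cluster counts are additive, and super-multiplicativity gives
\begin{equation*}
Z_{\Lambda_m^*}(\beta,\text{counts for }\vect{\rho''})\,\geq\,Z_{\Lambda_N}(\beta,N,N_1^\ssup{N},\dots,N_j^\ssup{N})\,Z_A(\beta,\text{surplus counts}).
\end{equation*}
Taking logarithms, dividing by $|\Lambda_N|$, applying Lemma~\ref{lem-etadef} to the left-hand side and Lemma~\ref{lem:zka-lb} to lower bound $Z_A$, and finally sending $m\to\infty$ and $\vect{\rho''}\to\vect{\rho}$, yields the upper bound.

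The main obstacle is the combinatorial matching of the $j+1$ cluster-count constraints: the dyadic approximations $\vect{\rho'}$ and $\vect{\rho''}$ must sit strictly on either side of $\vect{\rho}$ in every coordinate, which requires $\vect{\rho}\in\Delta_j$ (the interior), not merely $\vect{\rho}\in\overline{\Delta_j}$. The residual (respectively surplus) particles in the corridor (respectively in $A$) must then be arranged to realise exactly the prescribed counts of $k$-clusters for each $k\leq j$; checking that such arrangements exist and that their entropic cost is only $o(|\Lambda_N|)$ is the technical crux of the proof.
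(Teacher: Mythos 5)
Your plan (sandwich via packing and embedding, continuity of $f_j(\beta,\cdot)$ on the interior $\Delta_j$, supermultiplicativity) is structurally the same as the paper's, but both halves as stated have real gaps.

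\textbf{Lower bound.} You demand $\rho_k'<\rho_k$ for each $k$ \emph{and} $\rho'-\sum_k k\rho_k'<\rho-\sum_k k\rho_k$; adding, this forces $\rho'<\rho$. With a \emph{maximal} packing of $\Lambda_n^*$-translates, the corridor has volume of order $|\Lambda_N|\,dR/L_n^*=o(|\Lambda_N|)$ as $n\to\infty$, while the residual particle count is asymptotically $(\rho-\rho')|\Lambda_N|$, which is $\Theta(|\Lambda_N|)$. So the corridor cannot hold the residual once $n$ is large relative to $(\rho-\rho')^{-1}$: the density there would exceed $\rho_\mathrm{cp}$. Your order of limits ($N\to\infty$, then $n\to\infty$, then $\rho'\to\rho$) therefore breaks down at the second step. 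The paper avoids this by taking the dyadic total density \emph{above} $\rho$, namely $\rho^*>\rho$, $\rho_k^*\leq\rho_k$. Then only $m_N\sim\frac{\rho}{\rho^*}|\Lambda_N|/|\Lambda_{n(N)}^*|$ special cubes are needed, occupying a fraction $\rho/\rho^*<1$ of $\Lambda_N$, and the leftover room, of order $|\Lambda_N|$, comfortably accommodates the extra boxes for residual particles. Moreover, the paper couples $n=n(N)$ to $N$ (requiring $|\Lambda_{n(N)}^*|^2/|\Lambda_N|\to 0$), so there is only one auxiliary limit $\rho^*\to\rho$, not two nested ones. Keeping $\rho_k'<\rho_k$ for $k\leq j$ is correct (to make the residual cluster counts non-negative), but you must give up the constraint on the free-volume density and allow $\rho'>\rho$.

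\textbf{Upper bound.} You propose to lower-bound $Z_A$ (the constrained partition function in $A=\Lambda_m^*\setminus\Lambda_N$) by Lemma~\ref{lem:zka-lb}. But that lemma only bounds the partition function of a \emph{single} $k$-cluster in a box, and the resulting bound for $Z_A$ would be a crude $\exp(-c\,|A|)$ with some temperature- and $\delta$-dependent constant $c$, not the sharp $\exp(-\beta f_j(\beta,\vect{\rho''})|A|)$. Since $|A|/|\Lambda_m^*|$ is bounded away from $0$ (it must be, or the surplus density in $A$ blows up), this crude estimate leaves a constant error $\frac{1-\alpha}{\alpha}\,c\,\rho_A$ in your final inequality that does not vanish as $\vect{\rho''}\to\vect{\rho}$. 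The paper instead applies the already-proved \emph{lower bound} of this very lemma to $\Lambda_N''=A$ at the appropriate density $\vect{\rho''}$, which requires the extra observation (stated explicitly in the paper) that the lower bound holds for sequences of L-shaped Fisher domains, not just cubes. The paper also ensures $\vect{\rho''}\in\Delta_j$ by a quantitative argument ($|\vect{\rho''}-\vect{\rho}|\leq 2|\vect{\rho}-\vect{\rho^*}|$, then choose $\vect{\rho^*}\in B_{\eps/2}(\vect{\rho})$); your proposal only says "slightly above $\vect{\rho}$ componentwise'', which does not by itself guarantee $\vect{\rho''}\in\Delta_j$. Finally, $m$ is tied to $N$ via $|\Lambda_N|/|\Lambda_{m(N)}^*|\to\alpha\in(0,\tfrac12]$; it is not a free limit to be taken after $N\to\infty$.
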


\begin{pf} We proceed as in~\cite{Ru99}, page~47. We first prove the
lower bound in~\eqref{rhoinDelta}.
We will approximate $(\rho,\rho_1,\ldots,\rho_j)$ with $(\rho^*,\rho
^*_1,\ldots,\rho^*_j) \in\mathcal{D}_j$ satisfying $\rho>\rho^*$ and
$\rho^*_1 \leq\rho_1, \ldots,\rho^*_j \leq\rho_j$. The idea is to pick
the size parameter $n=n(N)\to\infty$ of the special sequence of cubes
$\Lambda_{n(N)}^*$ introduced at the beginning of Section~\ref
{sec-specialcubes} in such a way that the cubes are small compared to
$\Lambda_N$.
Hence, we can place a lot of them inside $\Lambda_N$ at mutual distance
$\geq R$. Afterwards, we distribute the particles and clusters
inside a certain number of special cubes according to the
distribution $(\rho^*,\rho^*_1,\ldots, \rho^*_j)$ and place
the few remaining particles somewhere else in $\Lambda_N$.

Let $(n(N))_{N\in\N}$ be an integer-valued sequence such that
\[
n(N) \to\infty\quad\mbox{and}\quad\bigl |\Lambda_{n(N)}^*\bigr|^2 / |\Lambda
_N| \to0.
\]
We define $N_*^{\ssup{n(N)}}$ and $N_{*,k}^{\ssup{n(N)}}$ by \eqref
{eq:special-ns} with $n$ replaced by $n(N)$ and $\rho,\rho_1,\ldots,\rho
_j$ replaced by $\rho^*,\rho^*_1,\ldots,\rho^*_j$.
Let $m_N \in\N_0$ and $r^\ssup{N} \in\{0,\ldots,\break N_*^{\ssup{n(N)}} -
1\}$ be such that
\[
N = m_N N_*^{\ssup{n(N)}} + r^\ssup{N}.
\]
This is possible because $\rho>\rho^*$ and therefore $N>N_*^{\ssup
{n(N)}}$ for all sufficiently large $N$.
For $k\in\{1,\ldots,j\}$, define $r_k^\ssup{N}$ by
\[
N_k^\ssup{N} = m_N N_{*,k}^{\ssup{n(N)}}
+ r_k^\ssup{n(N)}.
\]

We claim that, for sufficiently large $N$, the $r_k^\ssup{N}$ are
nonnegative integers. Indeed,
this follows from
\[
N_k^\ssup{N} \sim\rho_k |\Lambda_N|
\quad\mbox{and}\quad m_N N_{*,k}^{\ssup{n(N)}} \sim
\frac{\rho|\Lambda_N|}{\rho^* |\Lambda
_{n(N)}^*|} \rho_k^* \bigl|\Lambda_{n(N)}^*\bigr| = \frac\rho{
\rho^*} \rho_k^* |\Lambda_N|
\]
in combination with $\rho_k \geq\rho_k^* >\frac\rho{\rho^*} \rho_k^* $.
Moreover, we can place $m_N + r^\ssup{N}$ copies of $\Lambda_{n(N)}^*$
with mutual distance $\geq R$ inside $\Lambda_N$. This is so because
\begin{eqnarray*}
m_N\bigl|\Lambda_{n(N)}^*\bigr| &\sim&\frac{\rho}{\rho^*} |
\Lambda_N|\quad \mbox {and} \\
 r^\ssup{N} \bigl|\Lambda_{n(N)}^*\bigr|
&=& O\bigl( N_*^{\ssup{n(N)}}\bigl|\Lambda_{n(N)}^*\bigr|\bigr) = O\bigl( \rho^* |
\Lambda_{n(N)}|^2 \bigr) = o\bigl(|\Lambda_N|\bigr).
\end{eqnarray*}
We lower bound the constrained partition function with parameters
$N,N_1^\ssup{N},\break \ldots,   N_j^\ssup{N}$ by distributing
first particles and clusters in the $m_N$ boxes following the distribution
$N^{\ssup{n(N)}}_{*,k}$. This leaves $r^\ssup{N}$ particles. Of those
we distribute
first $k r_k^\ssup{N}$ as clusters of size $k$, one per special cube,
and then we distribute the remaining $s^\ssup{N}$ particles into clusters
of size $j+1$ except maybe for one of size between $j+2$ and $2j+1$.
Pretend for simplicity that they all have size $j+1$. Then we get
\begin{eqnarray*}
&&\log Z_{\Lambda_N}\bigl(\beta,N,N_1^\ssup{N},
\ldots,N_j^\ssup{N}\bigr) \\
&&\qquad \geq m_N \log
Z_{\Lambda_{n(N)}^*} \bigl(\beta,N_*^{\ssup{n(N)}}, N_{*,1}^{\ssup{n(N)}},
\ldots,N_{*,j}^{\ssup{n(N)}}\bigr)
\\
& &\qquad\quad{}+ \sum_{k=1}^{j+1} r^\ssup{N}_k
\log Z_k^{\cl,L_{n(N)}^*} (\beta),
\end{eqnarray*}
where $L_{n(N)}^*$ denotes the side length of $\Lambda_{n(N)}^*$. Using that
$\sum_{k=1}^{j+1} r_k^\ssup{N} \leq r^\ssup{N} \leq N_*^{\ssup{n(N)}}
= o(|\Lambda_N|)$,
we get
\[
\liminf_{N\to\infty} \frac{1}{|\Lambda_N|} \log Z_{\Lambda_N}\bigl(
\beta,N,N_1^\ssup{N},\ldots,N_j^\ssup{N}
\bigr) \geq- \beta\frac{\rho}{\rho^*} f_j\bigl(\beta,\rho^*,
\rho_1^*,\ldots, \rho_j^*\bigr).
\]
Now let $(\rho^*,\rho^*_1,\ldots,\rho^*_j) \to(\rho,\rho_1,\ldots,\rho_j)$
and use the continuity of $f_j(\beta,\cdot)$ in $\Delta_j$, to obtain
\[
\liminf_{N\to\infty} \frac{1}{|\Lambda_N|} \log Z_{\Lambda_N}\bigl(
\beta,N,N_1^\ssup{N},\ldots,N_j^\ssup{N}
\bigr) \geq- \beta f_j(\beta,\rho,\rho_1,\ldots,
\rho_j).
\]

Now we prove the upper bound in \eqref{rhoinDelta}.
First of all, let us observe that the lower bound holds not only for
sequences of cubes, but more generally for sequences of domains
$\Lambda''_N$ that converge to infinity in the Fisher sense,
as can be shown along the lines of our proof and \cite{Ru99}.
We shall need the statement not for general Fisher domains but only
for $\Lambda''_N$ defined below,
which is an L-shaped domain that is a difference of two cubes.

Now fix $C \in(0,\frac{1}2)$. For $N\in\N$, let $n(N)\in\N$ be so
large that $\Lambda_{n(N)}^*$
contains $\Lambda_N$ and satisfies
\[
0< C \leq\frac{|\Lambda_N|}{|\Lambda_{n(N)}^*|} \leq\frac
{1}{2},\qquad n\in\N.
\]
Let $\Lambda''_{N}$ be the set of points in $\Lambda_{n(N)}^*$
having distance $> R'$ to $\Lambda_N$. Then $(|\Lambda_N| + |\Lambda
''_N| )/|\Lambda_{n(N)}^*|
\to1$.
Let $\vectt{\rho}^*=(\rho^*,\rho^*_1,\ldots,\rho^*_j) \in\Delta_j \cap
\mathcal{D}_j$ such that
$\rho^*_k >0$. Define $N_*^{\ssup{n(N)}}$ and $N_{*,k}^{\ssup{n(N)} }$
as in equation~\eqref{eq:special-ns} with $n$ replaced by $n(N)$ and
$\rho,\rho_1,\ldots,\rho_j$ replaced by $\rho^*,\rho^*_1,\ldots,\rho
^*_j$. Then
%
%
\begin{eqnarray}
\label{ineq:lbaux} %
\qquad&& Z_{\Lambda_{n(N)}^*}\bigl(
\beta,N_*^{\ssup{n(N)}},N_{*,1}^{\ssup
{n(N)}},\ldots,N_{*,j}^{\ssup{n(N)}}
\bigr)
\nonumber
\\
&&\qquad\geq Z_{\Lambda_{N}}\bigl(\beta,N,N_1^\ssup{N},
\ldots,N_j^\ssup{N}\bigr)\\
&&\qquad\quad{} \times Z_{\Lambda_N''}\bigl(
\beta,N_*^{\ssup{ n(N)}} - N,N_{*,1}^{\ssup{n(N)}}-
N_1^\ssup{N}, \ldots,N_{*,j}^{\ssup{n(N)}} -
N_j^\ssup{N} \bigr). \nonumber
\end{eqnarray}
Assume for simplicity that $|\Lambda_N| / |\Lambda_{n(N)}^*| \to\alpha
\in(0,1/2]$
(otherwise go to suitable subsequences). Then
\[
\frac{N_*^{\ssup{n(N)}} - N}{|\Lambda_N''|} \sim\frac{ \rho^* |\Lambda_{n(N)}^*| - \rho|\Lambda_N|}{|\Lambda_N''|} \to\frac{\rho^* - \rho \alpha}{1-\alpha} =:
\rho''.
\]
Define $\rho''_1,\ldots,\rho''_j$ in an analogous way, and put $\vectt{\rho}''=(\rho'',\rho''_1,\ldots,\rho''_j)$. Thus
$\vectt{\rho}^* = \alpha\vectt{\rho} + (1-\alpha) \vectt{\rho}''$ and
\[
\bigl|\vectt{\rho}'' - \vectt{\rho}\bigr| = (1-
\alpha)^{-1} \bigl|\vectt{\rho}- \vectt{\rho }^*\bigr| \leq2 \bigl|\vectt{\rho}-
\vectt{\rho}^*\bigr|,
\]
with $|\cdot|$ the Euclidean norm.
Let $\eps>0$ such that $B_\eps(\vectt{\rho}) \subset\Delta_j$. Now
additionally assume that $\vectt{\rho}^* \in B_{\eps/2}(\vectt{\rho})$.
Thus $\vectt{\rho}''\in\Delta_j$.
In equation~\eqref{ineq:lbaux}, we take logarithms, divide by $|\Lambda
_{n(N)}^*|$ and pass to the limit $N\to\infty$, which gives
\begin{eqnarray*}
&&- \beta f_j\bigl(\beta,\vectt{\rho}^*\bigr)
\\
&&\qquad\geq\alpha\limsup
_{N\to\infty} \frac{1}{|\Lambda_N|} \log Z_{\Lambda
_{N}}\bigl(
\beta,N,N_1^\ssup{N},\ldots,N_j^\ssup{N}
\bigr) - (1-\alpha) \beta f_j \bigl(\beta,\vectt{\rho}''
\bigr).
\end{eqnarray*}
To conclude we let $\vectt{\rho}^*\to\vectt{\rho}$ (hence $\vectt{\rho
}''\to\vectt{\rho}$) and use $\alpha>0$ and the continuity
of $f_j(\beta,\cdot)$ at $\vectt{\rho}$.
\end{pf}

%
\begin{lemma}\label{lem-Deltaclcomp}
Assume the situation of Lemma~\ref{lem-Delta}. If $\vectt{\rho}=(\rho,\rho_1,\ldots,\rho_j)$ is in $\overline\Delta_j^{\mathrm c}$ or in
$\partial\Delta_j$, then
\[
\limsup_{N\to\infty} \frac{1}{|\Lambda_N|} \log Z_{\Lambda_N}\bigl(
\beta, N,N_1^\ssup{N},\ldots, N_j^\ssup{N}
\bigr) \leq- \beta f_j(\beta,\vectt{\rho}).
\]
[Recall that $f_j(\beta,\vectt{\rho})=\infty$ in the first case.]
\end{lemma}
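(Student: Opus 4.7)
The plan is to adapt the supermultiplicativity argument from the upper-bound half of Lemma~\ref{lem-Delta} to densities on $\partial \Delta_j$ or outside $\overline\Delta_j$. First I would dispatch the trivially infeasible subcases of $\overline \Delta_j^{\,c}$: if $\vect{\rho}$ has a negative coordinate, $\rho>\rho_\mathrm{cp}$, or $\sum_{k=1}^{j}k\rho_k>\rho$, then no admissible configuration achieves the prescribed cluster counts once $N$ is large enough, so $Z_{\Lambda_N}(\beta,N,N_1^{\ssup N},\ldots,N_j^{\ssup N})\equiv 0$ and the desired bound holds with both sides $-\infty$.

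For the remaining cases, fix $\vect{\rho}_0\in\Delta_j\cap\mathcal{D}_j$ deep inside $\Delta_j$ and set $\vect{\rho}_s:=(1-s)\vect{\rho}+s\vect{\rho}_0$. Let $t^*:=\inf\{s\in(0,1]\colon\vect{\rho}_s\in\Delta_j\}$; by convexity of $\Delta_j$, $\vect{\rho}_s\in\Delta_j$ for all $s\in(t^*,1]$, and $t^*=0$ precisely when $\vect{\rho}\in\partial\Delta_j$, while $2t^*<1$ can always be arranged by the choice of $\vect{\rho}_0$. I would then repeat the supermultiplicative construction \eqref{ineq:lbaux} verbatim, taking the special cube $\Lambda^*_{n(N)}\supset\Lambda_N$ so that $|\Lambda_N|/|\Lambda^*_{n(N)}|\to\alpha=1/2$, and density parameters $\vect{\rho}^*=\vect{\rho}_s\in\Delta_j\cap\mathcal{D}_j$, $\vect{\rho}''=\vect{\rho}_{2s}\in\Delta_j$ for $s\in(t^*,1/2)$. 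Lemma~\ref{lem-etadef} handles the $\Lambda^*_{n(N)}$-factor, and the Fisher-sequence extension of Lemma~\ref{lem-Delta}'s lower bound handles the L-shaped margin $\Lambda_N''$. Taking logs, dividing by $|\Lambda^*_{n(N)}|$ and passing to $N\to\infty$ yields
\begin{equation*}
  \limsup_{N\to\infty}\frac{1}{|\Lambda_N|}\log Z_{\Lambda_N}(\beta,N,N_1^{\ssup N},\ldots,N_j^{\ssup N}) \leq -2\beta\,g(s)+\beta\,g(2s),
\end{equation*}
where $g(s):=f_j(\beta,\vect{\rho}_s)$ is convex on $(t^*,1]$. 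I then send $s\downarrow t^*$. When $f_j(\beta,\vect{\rho})<\infty$ (only possible for $\vect{\rho}\in\partial\Delta_j$), \eqref{eq:fjradial} gives $g(s),g(2s)\to f_j(\beta,\vect{\rho})$, so the right-hand side tends to $-\beta f_j(\beta,\vect{\rho})$ as required. When $f_j(\beta,\vect{\rho})=+\infty$, the decisive fact is that $g(s)\to+\infty$ as $s\downarrow t^*$; a convex function blowing up at the left endpoint of its domain is necessarily strictly decreasing on a right neighbourhood of it (its slopes are non-decreasing but must be unbounded below), hence $g(2s)\leq g(s)$ for $s$ close to $t^*$, which gives $-2\beta g(s)+\beta g(2s)\leq -\beta g(s)\to-\infty$, matching $-\beta f_j(\beta,\vect{\rho})=-\infty$.

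The principal obstacle is justifying $g(s)\to+\infty$ as $s\downarrow t^*$ in the non-trivial outside case $\vect{\rho}\in\overline\Delta_j^{\,c}$: a priori the ray from $\vect{\rho}$ to $\vect{\rho}_0$ could cross $\partial\Delta_j$ at a point where $f_j$ is finite, which would invalidate the simple convexity argument. Resolving this requires either proving that no such non-trivial outside point exists (equivalently, that $\overline\Delta_j$ already coincides with the full feasibility simplex $\{\rho_k\geq 0,\ \sum_k k\rho_k\leq\rho\leq\rho_\mathrm{cp}\}$, which extends Lemma~\ref{lem:non-empty-int} to the full admissible range), or adapting the approach to vary the direction of $\vect{\rho}_0$ so that the crossing point $\vect{\rho}_{t^*}$ lands where $f_j=+\infty$, exploiting that $\vect{\rho}$ is strictly separated from $\dom f_j$ by a supporting hyperplane of this convex set.
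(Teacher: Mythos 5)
Your handling of the case $\vect{\rho}\in\partial\Delta_j$ is essentially correct and mirrors the paper, and you have correctly identified where the approach breaks down for $\vect{\rho}\in\overline\Delta_j^{\,\rm c}$. But the gap you flag is a genuine, unresolved one, and neither of your proposed fixes closes it. Fix~(i) --- that $\overline\Delta_j$ equals the full feasibility simplex $\{\rho_k\geq 0,\ \sum_k k\rho_k\leq\rho\leq\rho_{\rm cp}\}$ --- is simply false: the paper explicitly remarks, just after Proposition~\ref{prop:constrained-free-energy}, that for $j=1$ and $\rho>1/|B(0,R)|$ one has $f_1(\beta,\rho,\rho)=\infty$ for close-packing reasons, even though $(\rho,\rho)$ lies in the feasibility simplex whenever $\rho<\rho_{\rm cp}$. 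So there can be ``non-trivial'' outside points, and your dispatching of the ``trivially infeasible'' subcases (negative coordinate, $\rho>\rho_{\rm cp}$, $\sum k\rho_k>\rho$, each giving $Z\equiv 0$) does not exhaust $\overline\Delta_j^{\,\rm c}$. Fix~(ii) is left as a sketch, and the separating-hyperplane observation by itself does not produce a direction along which the crossing point has $f_j=\infty$; it is entirely consistent with convexity and lower semi-continuity that $f_j$ is finite on all of $\partial(\dom f_j)$ reachable from $\vect{\rho}$.

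The root of the difficulty is that you force both auxiliary densities $\vect{\rho}^*=\vect{\rho}_s$ and $\vect{\rho}''=\vect{\rho}_{2s}$ to lie inside $\Delta_j$, and then try to extract $-\infty$ from a limit $s\downarrow t^*$. The paper instead exploits the inequality~\eqref{eq:bound-boundary} with $\vect{\rho}^*\in\mathcal{D}_j$ taken \emph{outside} $\overline\Delta_j$ while only $\vect{\rho}''$ is required to lie in $\Delta_j$: with $\vect{\rho}\in\overline\Delta_j^{\,\rm c}$ and $\vect{\rho}''\in\Delta_j$ close enough to the boundary, the interpolate $\vect{\rho}^*=\alpha\vect{\rho}+(1-\alpha)\vect{\rho}''$ lands in $\mathcal{D}_j\cap\overline\Delta_j^{\,\rm c}$, so $\eta_j(\beta,\vect{\rho}^*)=\infty$ by~\eqref{eq:etadom}, and~\eqref{eq:bound-boundary} forces $\limsup=-\infty$ directly, with no need to know anything about the boundary values of $f_j$. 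Your requirement $\vect{\rho}^*=\vect{\rho}_s\in\Delta_j$ makes $\eta_j(\beta,\vect{\rho}^*)$ finite by construction and thereby discards exactly the lever that makes the outside case work. (There is also a smaller technical issue worth noting: $\vect{\rho}_s=(1-s)\vect{\rho}+s\vect{\rho}_0$ will typically not belong to the dyadic lattice $\mathcal{D}_j$ for generic $\vect{\rho}$, so the supermultiplicative inequality cannot be applied to $\vect{\rho}^*=\vect{\rho}_s$ verbatim; one must approximate by nearby dyadic points, as the paper does in the proof of Lemma~\ref{lem-Delta}.)
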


\begin{pf} We proceed as in \cite{Ru99}, Proposition~3.3.8, page~48.
One can show that
there is an $\alpha\in(0,1/2]$ such that for $\vectt{\rho}^* \in
\mathcal{D}_j$ satisfying $\rho^*_k>0$ whenever
$\rho_k>0$, and $\vectt{\rho}'' \in\Delta_j$ satisfying
%
%
\begin{equation}
\label{eq:alpharhos} \vectt{\rho}^* = \alpha\vectt{\rho} + (1-\alpha) \vectt{
\rho}'',
\end{equation}
it holds that
%
%
\begin{eqnarray}
\label{eq:bound-boundary} - \beta\eta_j\bigl(\beta,\vectt{\rho}^*\bigr) &\geq&
\alpha\limsup_{N\to\infty} \frac{1}{|\Lambda_N|} \log Z_{\Lambda_{N}}
\bigl(\beta,N,N_1^\ssup{N},\ldots,N_j^\ssup{N}
\bigr)
\nonumber
\\[-8pt]
\\[-8pt]
\nonumber
&&{} - (1-\alpha) \beta f_j\bigl(\beta, \vectt{\rho}''
\bigr).
\end{eqnarray}
The proof of this is similar to the proof of the upper bound in
Lemma~\ref{lem-Delta}.

(a) Consider the case $\vectt{\rho} \in\overline\Delta_j^{\mathrm c}$.
For $\vectt{\rho}''\in\Delta_j$,
we define $\vectt{\rho}^*$ by \eqref{eq:alpharhos}. By choosing $\vectt
{\rho}''$ close enough
to $\partial\Delta_j$,
we can ensure that $\vectt{\rho}^* \in\mathcal{D}_j \cap\overline
\Delta_j ^\mathrm{c}$.
Thus we conclude from~\eqref{eq:bound-boundary} that
\[
\limsup_{N\to\infty} \frac{1}{|\Lambda_N|} \log Z_{\Lambda_N}\bigl(
\beta, N,N_1^\ssup{N},\ldots, N_j^\ssup{N}
\bigr) = - \infty.
\]

(b) If $\vectt{\rho} \in\partial\Delta_j$, let $\vectt{\rho}'(\eps)
\in\Delta_j \cap B_\eps(\vectt{\rho})$
be such that $f_j(\beta,\vectt{\rho}'(\eps)) \to f_j(\beta,\vectt{\rho
})$ as $\eps\downarrow0$.
By \cite{hl01}, Lemma 2.1.6, page~35,
the half-open line segment $(\vectt{\rho},\vectt{\rho}'(\eps)]$ is
contained in $\Delta_j$.
Since $\mathcal{D}_j$ is dense and because of the continuity of
$f_j(\beta,\cdot)$ at
$\vectt{\rho}'(\eps)$, we can find $\vectt{\rho}''(\eps) \in\Delta_j
\cap B_\eps(\vectt{\rho})$
such that:
\begin{itemize}
\item$\vectt{\rho}^*(\eps)$, defined by \eqref{eq:alpharhos} with $\vectt
{\rho}''$ replaced by $\vectt{\rho}''(\eps)$, is in $\Delta_j\cap
\mathcal{D}_j \cap B_\eps(\vectt{\rho})$;
\item$|f_j(\beta,\vectt{\rho}'(\eps)) - f_j(\beta,\vectt{\rho}''(\eps))|
\leq\eps$,
so that $f_j(\beta,\vectt{\rho}''(\eps)) \to f_j(\beta,\vectt{\rho})$ as
$\eps\to0$.
\end{itemize}
It follows from equation~\eqref{eq:bound-boundary} that
\begin{eqnarray*}
&&\alpha\limsup_{N\to\infty} \frac{1}{|\Lambda_N|} \log Z_{\Lambda_N}
\bigl(\beta, N,N_1^\ssup{N},\ldots, N_j^\ssup{N}
\bigr) \\
&&\qquad \leq\limsup_{\eps\to0} \bigl( - \beta f_j\bigl(
\beta,\vectt{\rho}^*(\eps )\bigr) + (1-\alpha) \beta f_j\bigl(\beta,
\vectt{\rho}'' (\eps)\bigr) \bigr)
\\
&&\qquad \leq- \alpha\beta f_j(\beta,\vectt{\rho}).
\end{eqnarray*}
\upqed\end{pf}

\begin{pf*}{Proof of Proposition~\ref{prop:constrained-free-energy}}
This is now straightforward from the previous lemmas.
\end{pf*}

\subsection{The \texorpdfstring{$\rho$}{$rho$}-sections of \texorpdfstring{${\Delta_j}$}{$Delta_j$}}\label{sec-Deltaj}

We already saw that the set $\{f_j(\beta,\cdot)<\infty\}$ has
nonempty interior $\Delta_j$.
In view of the large deviations principle we are interested in
properties of the map $(\rho_1,\ldots,\rho_j) \mapsto f_j(\beta,\rho,\rho_1,\ldots,\rho_j)$ at fixed $\beta$ and $\rho$.
This means that we look at the restriction of $f_j(\beta,\cdot)$ to the
hyperplane of constant density $\rho$.

Now, this restricted map inherits the convexity and lower semi-continuity
from $f_j(\beta,\cdot)$. The question whether the set where it is finite
has nonempty interior is, however more subtle. Closely related is the question
whether $\Delta_j$ has nonempty intersection with the hyperplane of constant
density $\rho$.

To this aim consider the $\rho$-section of $\Delta_j$,
%
%
\begin{equation}
\label{eq:cj} C_j(\rho):= \bigl\{ (\rho_1,\ldots,
\rho_j) \in(0,\infty)^j | (\rho,\rho_1,\ldots,
\rho _j) \in\Delta_j \bigr\}.
\end{equation}
Put differently, $\{\rho\}\times C_j(\rho)$ is the intersection of
$\Delta_j$
with the hyperplane of constant density $\rho$.
%
The hyperplane always cuts through the interior of $\Delta_j$, that is,
cannot be tangent to $\Delta_j$:

%
\begin{lemma} \label{lem:cj-nonempty}
For any $\rho\in(0,\rho_{\mathrm{ cp}})$, the set $C_j(\rho)$ is nonempty,
convex and open.
Moreover,
%
%
\begin{equation}
\label{eq:secclos} \overline{C_j(\rho)} = \bigl\{ (\rho_1,
\ldots,\rho_j) \in[0,\infty)^j \mid (\rho,
\rho_1,\ldots, \rho_j) \in\overline{\Delta}_j
\bigr\}.
\end{equation}
\end{lemma}

This last equation says that it does not matter whether we take first
the $\rho$-section and then close the set, or
if we close first and then take the section.

The essential ingredients of the proof of Lemma~\ref{lem:cj-nonempty}
are the convexity of $f_j(\beta,\cdot)$,
Lemma~\ref{lem:nonempty-int} and the following.

%
\begin{lemma}
\label{lem:fj-origin}
Let $\rho\in(0,\rho_\mathrm{cp})$. Then there is at least one point
$(\rho_1,\ldots,\break \rho_j) \in[0,\infty)^j$
such that $f_j(\beta,\rho,\rho_1,\ldots,\rho_j) <\infty$.
\end{lemma}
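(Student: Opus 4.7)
The strategy is a pigeonhole argument: since $f(\beta,\rho)$ is finite for $\rho<\rho_\mathrm{cp}$, the full canonical partition function $Z_{\Lambda_N}(\beta,N)$ decays only exponentially in $|\Lambda_N|$, while its decomposition
$$Z_{\Lambda_N}(\beta,N) = \sum_{N_1,\ldots,N_j\geq 0} Z_{\Lambda_N}(\beta,N,N_1,\ldots,N_j)$$
contains only polynomially many non-zero terms (each $N_k \in \{0,1,\ldots,N\}$, so at most $(N+1)^j$ terms). Hence at least one cluster-number tuple $(N_1^{*(N)},\ldots,N_j^{*(N)})$ carries a non-negligible share of the weight, and this will give the desired finite-free-energy point.

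Concretely, I would fix $\rho\in(0,\rho_\mathrm{cp})$ and a sequence of cubes $\Lambda_N$ and integers $N$ with $N/|\Lambda_N|\to\rho$. From $f(\beta,\rho)<\infty$ one has a constant $M<\infty$ with $Z_{\Lambda_N}(\beta,N)\geq \exp(-\beta M|\Lambda_N|)$ for large $N$. By pigeonhole one then obtains a tuple $(N_1^{*(N)},\ldots,N_j^{*(N)})$ with
$$Z_{\Lambda_N}(\beta,N,N_1^{*(N)},\ldots,N_j^{*(N)}) \geq \frac{\exp(-\beta M|\Lambda_N|)}{(N+1)^j}.$$
Taking $-\tfrac{1}{\beta|\Lambda_N|}\log$ and using $\log N/|\Lambda_N|\to 0$ yields
$$\limsup_{N\to\infty} \Bigl(-\tfrac{1}{\beta|\Lambda_N|}\log Z_{\Lambda_N}(\beta,N,N_1^{*(N)},\ldots,N_j^{*(N)})\Bigr) \leq M.$$

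Next, since $N_k^{*(N)}/|\Lambda_N|\in[0,N/|\Lambda_N|]$ is bounded, I would pass to a subsequence along which $N_k^{*(N)}/|\Lambda_N|\to\rho_k$ for each $k=1,\ldots,j$, with $\vect{\rho}=(\rho,\rho_1,\ldots,\rho_j)\in[0,\rho_\mathrm{cp}]\times[0,\rho]^j$. Now Proposition~\ref{prop:constrained-free-energy} applies. The case $\vect{\rho}\in\overline{\Delta_j}^{\mathrm c}$ is ruled out, because the proposition would force the liminf of $-\tfrac{1}{\beta|\Lambda_N|}\log Z_{\Lambda_N}(\ldots)$ to equal $+\infty$, contradicting the bound by $M$. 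In the remaining cases $\vect{\rho}\in\Delta_j$ or $\vect{\rho}\in\partial\Delta_j$, the $\leq$-direction in \eqref{eq:cfe-limit}--\eqref{eq:cfe-lim-boundary} (which is all that the boundary case yields) gives
$$f_j(\beta,\rho,\rho_1,\ldots,\rho_j) \leq \liminf_{N\to\infty}\Bigl(-\tfrac{1}{\beta|\Lambda_N|}\log Z_{\Lambda_N}(\beta,N,N_1^{*(N)},\ldots,N_j^{*(N)})\Bigr) \leq M < \infty,$$
which is the desired conclusion.

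The only mildly delicate point is that the subsequence limit $\vect{\rho}$ may, a priori, lie on $\partial\Delta_j$, where $f_j$ is defined as a lower semicontinuous hull rather than as an actual limit. This is not really an obstacle, however, because the boundary clause of Proposition~\ref{prop:constrained-free-energy} is precisely tailored to yield the half of the limit that one needs here. The non-constructive nature of the choice of $(\rho_1,\ldots,\rho_j)$ is acceptable, since the lemma only asserts existence of one such point.
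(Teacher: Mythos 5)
Your proof is correct and takes essentially the same route as the paper: the paper also picks the maximizing tuple $(N_1^{\ssup N},\ldots,N_j^{\ssup N})$ for the constrained partition function, bounds $Z_{\Lambda_N}(\beta,N)$ by the number of tuples (it uses the Hardy--Ramanujan $\exp(O(\sqrt N))$ bound where you use $(N+1)^j$; both are $\e^{o(|\Lambda_N|)}$), passes to a subsequence so that $N_k^{\ssup N}/|\Lambda_N|\to\rho_k$, and then invokes Proposition~\ref{prop:constrained-free-energy} to conclude $f_j(\beta,\rho,\rho_1,\ldots,\rho_j)\leq f(\beta,\rho)<\infty$. Your explicit case split on $\Delta_j$, $\partial\Delta_j$, $\overline{\Delta_j}^{\rm c}$ just spells out what the paper leaves implicit.
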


\begin{pf}
Let $N/|\Lambda_N| \to\rho$. Let $(N_1^\ssup{N},\ldots,N_j^\ssup{N})$
be such that
\[
Z_{\Lambda_N}\bigl(\beta,N,N_1^\ssup{N},
\ldots,N_j^\ssup{N}\bigr) = \max_{(N_1,\ldots,N_j) \in\N_0^j}
Z_{\Lambda_N}(\beta,N,N_1,\ldots,N_j).
\]
According to the Hardy--Ramanujan formula, the number of partitions of
$N$ is not larger than $\exp(O(\sqrt{N}))$. Thus we find
\[
Z_{\Lambda_N}(\beta,N) \leq\exp\bigl(O(\sqrt{N})\bigr) Z_{\Lambda_N}
\bigl(\beta,N,N_1^\ssup{N},\ldots,N_j^\ssup{N}
\bigr).
\]
Passing to a suitable subsequence, we may assume that $N_k^\ssup
{N}/|\Lambda_N| \to\rho_k$,
$k=1,\ldots,j$, for some $(\rho_1,\ldots,\rho_j) \in[0,\infty)^j$.
The previous inequality then yields
\[
-\infty< - \beta f(\beta,\rho) \leq- \beta f_j(\beta,\rho,\rho
_1,\ldots,\rho_j).
\]
\upqed\end{pf}

\begin{pf*}{Proof of Lemma~\ref{lem:cj-nonempty}}
Let $\rho\in(0,\rho_\mathrm{cp})$. Let $\rho'\in(\rho, \rho_\mathrm
{cp})$ and
$(\rho'_1,\ldots,\break  \rho'_j) \in[0,\infty)^j$ such that $f_j(\beta,\vectt
{\rho}')<\infty$, where
$\vectt{\rho}'=(\rho',\rho'_1,\ldots,\rho'_j)$. Hence, $\vectt{\rho}'
\in\overline\Delta_j$.
Let $\overline\rho(\delta)$ and $A(\overline\rho(\delta))$ be as in
Lemma~\ref{lem:nonempty-int}.
Let $\mathcal{C} \subset[0,\infty)^{j+1}$ be the cone with apex $\vectt
{\rho}'$
and base $A(\overline\rho(\delta))$, that is, the set of convex
combinations of
points in $A(\overline\rho(\delta))$ and $\vectt{\rho}'$.
By convexity, $\mathcal{C} \subset\Delta_j$.
Looking at the $\rho$-sections of $\mathcal{C}$ we find that $C_j(\rho
)$ is not empty.

Convexity and openness of $C_j(\rho)$ are inherited from $\Delta_j$.

Now we prove \eqref{eq:secclos}. Let $H_\rho= \{\rho\} \times\R^j
\subset\R^{j+1}$ be the
hyperplane of density~$\rho$. By \cite{hl01}, Proposition~2.1.10, page~37,
\[
\overline{\Delta_j \cap H_\rho} = \overline{
\Delta}_j \cap\overline {H}_\rho.
\]
The left-hand side is identified as $\{\rho\}\times\overline{C_j(\rho
)}$ while
the right-hand side is $\{\rho\}\times A$ with $A$ the set from the
right-hand side in
equation~\eqref{eq:secclos}.
\end{pf*}

\subsection{Proof of Proposition~\texorpdfstring{\protect\ref{prop:ldp-j}}{2.2}: LDP for the
projection of \texorpdfstring{$\bolds{\rho}_\Lambda$}{$rho_Lambda$}}\label{sec-proofLDPj}

In this section, we prove the large deviations principle for
$(\rho_{1,\Lambda},\ldots,\rho_{j,\Lambda})$ under the Gibbs measure, as
formulated in Proposition~\ref{prop:ldp-j}. This is equivalent to
showing the two bounds in~\eqref{lowerbound} and \eqref{upperbound} and
the claimed properties of $I_{\beta,\rho,j}$. Observe that the
distribution of $(\rho_{1,\Lambda},\ldots,\rho_{j,\Lambda})$ under the
Gibbs measure is concentrated on the compact set $M_\rho$. Hence, the
family of these distributions is in particular exponentially tight.
Hence, it is enough to prove the upper bound in \eqref{upperbound} for
compact sets. From this, in particular the compactness of the level
sets of $I_{\beta,\rho,j}$ follows, but we will also give an
independent proof.

For the remainder of this section, we fix $\rho\in(0,\rho_{\mathrm {cp}})$.

\subsubsection{Properties of \texorpdfstring{$I_{\beta,\rho,j}$}{$I_{beta,rho,j}$}}

Recall the function $I_{\beta,\rho,j}\dvtx[0,\infty)^j\to
\R\cup\{\infty\}$ from \eqref{eq:ij} and the $\rho$-section $C_j(\rho
)$ of $\Delta_j$ from \eqref{eq:cj}. Recall from Lemma~\ref
{lem:cj-nonempty} that $C_j(\rho)$ is nonempty, open and convex.

%
\begin{lemma} \label{lem:ij-properties}
(1) $I_{\beta,\rho,j}$ is convex, and its level sets are compact.%
\begin{longlist}[(2)]
\item[(2)]$I_{\beta,\rho,j}$ is finite in $C_j(\rho)$ and
infinite in the complement of the closure of $C_j(\rho)$.
\item[(3)] For every open set $\mathcal{O} \subset[0,\infty)^j$,
%
%
\begin{equation}
\label{eq:ij-inf} \inf_{\mathcal{O}} I_{\beta,\rho,j} = %
\cases{\displaystyle \inf_{\mathcal{O}\cap C_j(\rho)}I_{\beta,\rho,j},&\quad
$\mbox{if } \mathcal{O}\cap
C_j(\rho) \neq\emptyset,$\vspace*{2pt}
\cr
\infty,&\quad$\mbox{if }
\mathcal{O}\cap C_j(\rho) = \emptyset$.} %
\end{equation}
\end{longlist}
\end{lemma}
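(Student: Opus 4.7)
\medskip

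\textbf{Proof plan for Lemma \ref{lem:ij-properties}.}

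The plan is to transfer all three properties from the corresponding statements about $f_j(\beta,\cdot)$ established in Proposition~\ref{prop:constrained-free-energy} and Lemma~\ref{lem:cj-non-empty}, via the identity $I_{\beta,\rho,j}(\rho_1,\ldots,\rho_j) = \beta\bigl(f_j(\beta,\rho,\rho_1,\ldots,\rho_j) - f(\beta,\rho)\bigr)$, which is just an affine shift of the restriction of $f_j(\beta,\cdot)$ to the hyperplane $\{\rho\}\times\R^j$.

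For (i), convexity is immediate from the convexity of $f_j(\beta,\cdot)$. For compactness of the level sets, note that lower semi-continuity of $f_j(\beta,\cdot)$ passes to $I_{\beta,\rho,j}$, so each level set $\{I_{\beta,\rho,j}\leq c\}$ is closed. Moreover, by Proposition~\ref{prop:constrained-free-energy}, $\dom f_j(\beta,\cdot)\subset\overline{\Delta_j}\subset\{\sum_{k=1}^j k\rho_k\leq\rho\}$, so on the hyperplane of density $\rho$ the effective domain of $I_{\beta,\rho,j}$ is contained in the compact simplex $\{(\rho_1,\ldots,\rho_j)\in[0,\infty)^j : \sum_k k\rho_k\leq\rho\}$; hence the level sets are bounded, hence compact.

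For (ii), if $(\rho_1,\ldots,\rho_j)\in C_j(\rho)$ then $(\rho,\rho_1,\ldots,\rho_j)\in\Delta_j$, where $f_j(\beta,\cdot)$ is finite and continuous, so $I_{\beta,\rho,j}$ is finite there. Conversely, by \eqref{eq:fjdom-lem}, $\dom f_j(\beta,\cdot)\subset\overline{\Delta_j}$, and by \eqref{eq:secclos} the $\rho$-section of $\overline{\Delta_j}$ equals $\overline{C_j(\rho)}$; hence $I_{\beta,\rho,j}\equiv\infty$ on the complement of $\overline{C_j(\rho)}$.

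For (iii), the second case is immediate from (ii): if $\mathcal{O}\cap C_j(\rho)=\emptyset$ then $\mathcal{O}\subset \overline{C_j(\rho)}^{\rm c}\cup\partial C_j(\rho)$, and, since $\mathcal{O}$ is open and $\partial C_j(\rho)$ has empty interior in $\R^j$ (being the boundary of a non-empty open convex set), we in fact have $\mathcal{O}\subset\overline{C_j(\rho)}^{\rm c}$, so $I_{\beta,\rho,j}\equiv\infty$ on $\mathcal{O}$. In the first case the inequality $\inf_{\mathcal{O}}I_{\beta,\rho,j}\leq\inf_{\mathcal{O}\cap C_j(\rho)}I_{\beta,\rho,j}$ is trivial; for the reverse, let $\vect{\rho^{\circ}}\in\mathcal{O}$ with $I_{\beta,\rho,j}(\vect{\rho^{\circ}})<\infty$. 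By (ii), $(\rho,\vect{\rho^{\circ}})\in\overline{\Delta_j}$. Pick any $\vect{\rho^*}\in\mathcal{O}\cap C_j(\rho)$; then $(\rho,\vect{\rho^*})\in\Delta_j$, and the half-open line segment from $(\rho,\vect{\rho^{\circ}})$ to $(\rho,\vect{\rho^*})$ lies in $\Delta_j$ (the hyperplane $\{\rho\}\times\R^j$ is affine, so this segment stays in the section, and in the interior of the section in the hyperplane, which is $C_j(\rho)$). By the radial continuity \eqref{eq:fjradial},
\begin{equation*}
f_j(\beta,\rho,\vect{\rho^{\circ}}) = \lim_{t\downarrow 0}f_j\bigl(\beta,\rho,\vect{\rho^{\circ}}+t(\vect{\rho^*}-\vect{\rho^{\circ}})\bigr).
\end{equation*}
Since $\mathcal{O}$ is open, the points $\vect{\rho^{\circ}}+t(\vect{\rho^*}-\vect{\rho^{\circ}})$ lie in $\mathcal{O}\cap C_j(\rho)$ for all sufficiently small $t>0$, and their $I_{\beta,\rho,j}$-values converge to $I_{\beta,\rho,j}(\vect{\rho^{\circ}})$; taking the infimum over $\vect{\rho^{\circ}}\in\mathcal{O}\cap\overline{C_j(\rho)}$ yields the desired equality.

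The main obstacle is in (iii): the infimum over the open set $\mathcal{O}$ might a priori be achieved on boundary points of $C_j(\rho)$ where $I_{\beta,\rho,j}$ is finite, and one must verify that these boundary values are approached from inside $C_j(\rho)$ along admissible directions -- this is precisely where the boundary-value prescription \eqref{eq:fjboundary}-\eqref{eq:fjradial} from the extension step of Proposition~\ref{prop:constrained-free-energy} becomes essential.
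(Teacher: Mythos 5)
Your proof is correct and follows essentially the same route as the paper's: parts (i) and (ii) transfer directly from the properties of $f_j(\beta,\cdot)$ established in Proposition~\ref{prop:constrained-free-energy} and Lemma~\ref{lem:cj-non-empty}, and part (iii) uses the radial continuity \eqref{eq:fjradial} along a line segment into $C_j(\rho)$, exactly as in the paper (whether the segment argument is phrased in $\Delta_j\subset\R^{j+1}$, as you do, or directly in $C_j(\rho)\subset\R^j$, as the paper does, is immaterial by \eqref{eq:secclos}). Your closing remark correctly identifies the radial-continuity step as the crux; the only cosmetic difference is that in the second case of (iii) you invoke empty interior of $\partial C_j(\rho)$, whereas the simpler observation that an open set disjoint from $C_j(\rho)$ is automatically disjoint from $\overline{C_j(\rho)}$ suffices.
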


\begin{remark}
Equation~\eqref{eq:ij-inf} will be needed in the proof of the lower
bound for the large deviations principle.
The convexity enters in a crucial way in equation~\eqref{eq:ij-inf}.
Lower semi-continuity alone would not suffice!---(3) proves
that the open set $C_j(\rho)$ is a $I_{\beta,\rho,j}$-continuity set;
see \cite{DZ98}, page 5.
\end{remark}

\begin{pf}
(1) Convexity and lower semi-continuity are immediate consequences of
the properties for $f_j(\beta,\cdot)$, since the restriction of a
convex, lower semi-continuous function to a hyperplane is
also convex and lower semi-continuous. Thus the level sets of $I_{\beta,\rho,j}$ are closed.
By equation~\eqref{eq:fjdom-lem},
\[
\{I_{\beta,\rho,j}<\infty\} \subset \Biggl\{ (\rho_1,\ldots,
\rho_j) \in [0,\infty)^j \Big| \sum
_{k=1}^j k \rho_k \leq\rho \Biggr\}.
\]
It follows that the level sets are also bounded, hence compact.

(2) If $(\rho_1,\ldots,\rho_j)$ is in $C_j(\rho)$, then $(\rho,\rho
_1,\ldots,\rho_j) \in\Delta_j$
by definition of $C_j(\rho)$, and therefore $f_j(\beta,\rho,\rho
_1,\ldots,\rho_j) <\infty$. Hence,
$I_{\beta,\rho,j}(\rho_1,\ldots,\rho_j) <\infty$.

If $(\rho_1,\ldots,\rho_j)$ is in the complement of the closure of
$C_j(\rho)$, then
by equation~\eqref{eq:secclos},
$(\rho,\rho_1,\ldots,\rho_j)$ is in the
complement of the closure of $\Delta_j$, from which $I_{\beta,\rho,j}(\rho_1,\ldots,\rho_j) = \infty$ follows.

(3) If $\mathcal{O}$ and $C_j(\rho)$ are disjoint, $I_{\beta,\rho,j} =
+\infty$
on $\mathcal{O}$ by (2). If the sets are not disjoint, we know that
%
%
\begin{equation}
\inf_{\mathcal{O}} I_{\beta,\rho,j} = \inf_{\mathcal{O}\cap\overline{C_j(\rho)}}I_{\beta,\rho,j}
\leq \inf_{\mathcal{O}\cap C_j(\rho)}I_{\beta,\rho,j},
\end{equation}
and it remains to prove the opposite inequality.
Thus let
$\vectt{\rho}=(\rho_1,\ldots,\rho_j) \in\mathcal{O}\cap\partial
C_j(\rho)$.
Let $\vectt{\rho}' \in C_j(\rho)$.
By equation~\eqref{eq:fjradial},
\[
I_{\beta,\rho,j}(\vectt{\rho}) = \lim_{t\downarrow0}
I_{\beta,\rho,j}\bigl(\vectt{\rho} + t\bigl(\vectt{\rho}'-\vectt{
\rho}\bigr)\bigr).
\]
Because $\mathcal{O}$ is open and by \cite{hl01}, Lemma 2.1.6, page~35,
for sufficiently small $t$,
$\vectt{\rho} + t(\vectt{\rho}'-\vectt{\rho}) \in\mathcal{O} \cap
C_j(\rho)$. Thus for some
suitable $t_0>0$,
\begin{eqnarray*}
I_{\beta,\rho,j}(\vectt{\rho})& =& \lim_{t\downarrow0}
I_{\beta,\rho,j}\bigl(\vectt{\rho} + t\bigl(\vectt{\rho}'-\vectt{
\rho}\bigr)\bigr) \geq\inf_{t\in(0,t_0)} I_{\beta,\rho,j}\bigl(\vectt{
\rho} + t\bigl(\vectt{\rho }'-\vectt{\rho}\bigr)\bigr)\\
& \geq&\inf
_{\mathcal{O}\cap C_j(\rho)} I_{\beta,\rho,j}.
\end{eqnarray*}
\upqed\end{pf}

\subsubsection{The two bounds in \texorpdfstring{\protect\eqref{lowerbound}}{(2.5)} and \texorpdfstring{\protect\eqref{upperbound}}{(2.6)}}

For $A \subset[0,\infty)^j$, let
\[
\mathcal{P}_{N}(j,A):= \Biggl\lbrace(N_1,
\ldots,N_j) \in\N_0^j\Big | \bigl(N_1/|
\Lambda_N|,\ldots, N_N/|\Lambda_N| \bigr) \in A,
\sum_{k=1}^j k N_k \leq N
\Biggr\rbrace.
\]
We note that the probability of finding $(\rho_{1,\Lambda_N},\ldots,
\rho_{j,\Lambda_N})$ in the set $A$ is a sum
of constrained partition functions
\begin{eqnarray*}
&&\mathbb{P}_{\beta,\Lambda_N}^{\ssup N} \bigl( (\rho_{1,\Lambda_N},\ldots,
\rho_{j,\Lambda_N}) \in A \bigr) \\
&&\qquad= \frac{1}{Z_{\Lambda_N}(\beta,N)} \sum
_{(N_1,\ldots,N_N) \in\mathcal{P}_N(j,A)} Z_{\Lambda_N}( \beta,N,N_1,
\ldots,N_j).
\end{eqnarray*}

\emph{Upper bound in \textup{\eqref{upperbound}} for compact sets.}
Let $K \subset[0,\infty)^j$ be a compact set.
Let $(N_1^\ssup{N},\ldots,N_j^\ssup{N})\in\mathcal{P}_N(j,K)$ maximize
the constrained partition function over $\mathcal{P}_{N}(j,K)$, that is,
\[
Z_{\Lambda_N}\bigl(\beta,N,N_1^\ssup{N},
\ldots,N_j^\ssup{N}\bigr) = \max_{(N_1,\ldots,N_j) \in\mathcal{P}_N(j,K)}
Z_{\Lambda_N}(\beta,N,N_1,\ldots,N_j).
\]
Then
\[
\mathbb{P}_{\beta,\Lambda_N}^\ssup{N} \bigl( (\rho_{1,\Lambda_N},
\ldots, \rho_{j,\Lambda_N}) \in K \bigr) \leq \frac{|\mathcal{P}_{N}(j,K)|} {Z_{\Lambda_N}(\beta,N)}
Z_{\Lambda
_N}\bigl(\beta,N,N_1^\ssup{N},
\ldots,N_j^\ssup{N}\bigr).
\]
Now, the cardinality of $\mathcal{P}_N(j,K)$ is smaller than
the number of partitions of $N$, and therefore not larger than $\exp
(O(\sqrt{N}))$, which is $\e^{o(N)}$.
The sequence $(N_1^\ssup{N}/|\Lambda_N|,\ldots,N_j^\ssup{N}/|\Lambda
_N|)_{N\in\N}$
takes values in the compact set $K$ and therefore, going to a subsequence,
we can assume that it converges to some $(\rho_1,\ldots,\rho_j) \in K$.
Applying Proposition~\ref{prop:constrained-free-energy} we find
\begin{eqnarray*}
\limsup_{N\to\infty} \frac{1}{|\Lambda_N|} \log Z_{\Lambda_N}\bigl(
\beta,N,N_1^\ssup{N},\ldots,N_j^\ssup{N}
\bigr)& \leq&- \beta f_j(\beta,\rho,\rho_1,\ldots,
\rho_j)\\
& \leq&- \beta\inf_{K} f_j(
\beta,\rho,\cdot).
\end{eqnarray*}
This yields the upper bound in \eqref{upperbound} for $K=\Ccal$.

\emph{Lower bound in \textup{\eqref{lowerbound}} for open sets.} Let
$\mathcal{O} \subset[0,\infty)^j$ be an open set. Let $(\rho_1,\ldots,\rho_j) \in\mathcal{O}$. We can choose $(N_1^\ssup{N},\ldots,
N_j^\ssup{N}) \in\mathcal{P}_N(j,\mathcal{O})$ so that $N_k^\ssup
{N}/|\Lambda_N| \to\rho_k$, $k=1,\ldots,j$, and have
\[
\mathbb{P}_{\beta,\Lambda_N}^\ssup{N} \bigl( (\rho_{1,\Lambda_N},
\ldots, \rho_{j,\Lambda_N}) \in\mathcal{O} \bigr) \geq \frac{1} {Z_{\Lambda_N}(\beta,N)}
Z_{\Lambda_N}\bigl(\beta,N,N_1^\ssup {N},
\ldots,N_j^\ssup{N}\bigr).
\]
If $(\rho,\rho_1,\ldots,\rho_j)$ is in $\Delta_j$ or in the complement
of the
closure of $\Delta_j$, we conclude from Proposition~\ref
{prop:constrained-free-energy}
that
\[
\liminf_{N\to\infty} \frac{1}{|\Lambda_N|} \log\mathbb{P}_{\beta,\Lambda_N}^\ssup{N}
\bigl( (\rho_{1,\Lambda_N},\ldots, \rho_{j,\Lambda_N}) \in\mathcal{O} \bigr)
\geq- I_{\beta,\rho,j}(\rho_1,\ldots,\rho_j).
\]
Thus, taking on the right-hand side the supremum over all such $(\rho
_1,\ldots,\rho_j)$, we obtain
\begin{eqnarray*}
\liminf_{N\to\infty} \frac{1}{|\Lambda_N|} \log\mathbb{P}_{\beta,\Lambda_N}^\ssup{N}
\bigl( (\rho_{1,\Lambda_N},\ldots, \rho_{j,\Lambda_N}) \in\mathcal{O} \bigr)
&\geq&- \inf_{\mathcal{O} \cap C_j(\rho)} I_{\beta,\rho,j}\\
& =& - \inf
_{\mathcal{O}} I_{\beta,\rho,j}.
\end{eqnarray*}
The last equality uses Lemma~\ref{lem:ij-properties} for the case
$\mathcal{O}\cap C_j(\rho) \neq\emptyset$, and \eqref{lowerbound} is
proved in this case.
If $\mathcal{O}$ and $C_j(\rho)$ are disjoint, then $\inf_\mathcal{O}
I_{\beta,\rho,j} = \infty$, and \eqref{lowerbound} is trivially true.
This completes the proof of Proposition~\ref{prop:ldp-j}.

\subsection{The finish: Proof of the LDP for \texorpdfstring{$(\bolds{\rho}_{\Lambda_N})_{N\in\mathbb{N}}$}{$(rho_{Lambda_N})_{Nin\mathbb{N}}$}}\label{sec-finishLDP}

The proof of Theorem~\ref{thm:ldp} follows essentially from
Proposition~\ref{prop:ldp-j} and the Dawson--G\"artner theorem, the LDP
for projective limits; see \cite{DZ98}, Theorem 4.6.1. More precisely, let
\[
I_{\beta,\rho} \bigl( (\rho_k)_{k\in\N} \bigr) = \beta
\bigl( f \bigl(\beta,\rho,(\rho_k)_{k\in\N} \bigr) - f(\beta,
\rho) \bigr)
\]
with
\[
f \bigl(\beta,\rho,(\rho_k)_{k\in\N} \bigr):= \sup
_{j \in\N} f_j(\beta,\rho,\rho_1,\ldots,
\rho_j).
\]
Consider first $I_{\beta,\rho}$ as a function from $[0,\infty)^\N$ to
$\R\cup\{\infty\}$,
and endow $[0,\infty)^\N$ with the product topology,
By the Dawson--G{\"a}rtner theorem, $I_{\beta,\rho}$ is a good rate
function and
$(\vectt{\rho}_{\Lambda_N})_{N\in\N}$ satisfies a large deviations
principle with rate function~$I_{\beta,\rho}$.

Now for all $N$, $\P_{\beta,\Lambda_N}^\ssup{N}(\vectt{\rho}_{\Lambda_N}
\in M_{\rho+ \eps}) = 1$.
Moreover, $M_{\rho+\eps}$ is closed as a subset of $[0,\infty)^\N$ in
the product topology. Thus by \cite{DZ98}, Lemma 4.1.5, we conclude that
$(\vectt{\rho}_{\Lambda_N})_{N\in\N}$ satisfies a large deviations
principle also
as an $M_{\rho+\eps}$-valued random variable in this topology.

Next, one easily sees that on $M_{\rho+\eps}$ the product topology and
the $\ell^1$ topology coincide. It follows that $(\vectt{\rho}_{\Lambda
_N})$ satisfies the LDP also in this topology with the good rate
function $I_{\beta,\rho}$.

$I_{\beta,\rho}$ is convex because it is the supremum of a family of
convex functions.

Finally, if $I_{\beta,\rho}( (\rho_k)_{k\in\N})$ is finite, then, for
all $j \in\N$, we have $f_j(\beta,\rho,\rho_1,\ldots,\break  \rho_j)<\infty$
and hence by Proposition~\ref{prop:constrained-free-energy}, $\sum_{k=1}^j k \rho_k \leq\rho$.
Letting $j \to\infty$ we obtain $\sum_{k=1}^\infty k \rho_k \leq\rho
$. This proves that $\{I_{\beta,\rho}<\infty\}$ is contained in $M_\rho$.

\section{Approximation with an ideal mixture of clusters}\label{sec-boundsf}

In this section, we compare the rate function $f(\beta,\rho,\cdot)$ defined in \eqref{eq:rf-fe} with an ideal rate function. This
rate function describes a uniform mixture of clusters that do not
interact with each other. This function has a particularly simple
shape, since the combinatorial complexity does not take care of the
excluded-volume effect, that is, different clusters do not repel each other.

One of the crucial points is a lower estimate for the combinatorial
complexity of putting a given number of clusters into a large box in a
well-separated way. For this, we need to control the free energy of
clusters that fit into some box of a certain volume. It is relatively
easy to achieve this if the radius of that box is of order of the
cardinality of the cluster, that is, under the sole condition
Assumption~\ref{assV}. This will turn out in Section~\ref{sec-proofThm14} to
be sufficient for the regime in \eqref{dilutelimit}, that is, for the
proof of Theorem~\ref{thm:main}. However, in order to handle also the
much more flexible bounds in Theorem~\ref{thm:unif}, we will have to
use boxes with \textit{volume} of order of the cluster cardinality and to
make use of Assumption~\ref{ass:maxdist}.

We consider the \textit{cluster partition function}, which is defined, for
$\beta>0$ and $k \in\N$, by
\[
Z_k^\cl(\beta) = \frac{1}{k!} \int
_{ (\R^d)^{k-1}} \e^{-\beta
U_k(0,x_2,\ldots,x_k)} \mathbf{1} \bigl\{
\{0,x_2,\ldots,x_k\} \mbox{ connected} \bigr\} \,\dd
x_2\cdots\dd x_k.
\]
Recall the cluster partition function $Z_k^{\cl,a}(\beta)$ with
restriction to $[0,a]^d$ and additional factor $a^{-d}$ introduced in
\eqref{Zcldef} above. The reader easily checks that
\[
\lim_{a\to\infty} Z_k^{\cl,a}(\beta) =
Z_k^\cl(\beta),\qquad k\in\N, \beta\in(0,\infty).
\]
We also define associated cluster-free energies per particle:
%
%
\begin{equation}
\label{freeencldef} f_k^\cl(\beta): = - \frac{1}{\beta k } \log
Z_k^\cl(\beta),\qquad f_k^{\cl,a}(\beta): = -
\frac{1}{\beta k } \log Z_k^{\cl,a}(\beta).
\end{equation}
Let
%
%
\begin{equation}
f_\infty^\cl(\beta):= \liminf_{k\to\infty}
f_k^\cl(\beta) \quad\mbox {and} \quad f_\infty^\cl(
\beta,\rho):= \limsup_{k \to\infty}f_k^{\cl,L_k}(
\beta),
\end{equation}
where $L_k$ is such that the volume of $[0,L_k]^d$ is equal to $k/\rho
$. We will see in Section~\ref{sec-finftyfinite} [see Lemma~\ref
{lem:cfe-lower-bound} and \eqref{eq:akunif-b}] that these quantities
are finite. One can actually show that
they exist as limits, but we will not need that.

Now we can state our bounds. The first one expresses the (simple) bound
that comes from dropping the excluded-volume effect. Recall
definition~\eqref{eq:fideal-def} of the ideal free energy $f^\ideal$.

%
\begin{lemma}[(Lower bound)] \label{prop:lb}
For all $\beta,\rho>0$ and $\vectt{\rho} \in M_\rho$,
%
%
\begin{equation}
f (\beta,\rho, \vectt{\rho} ) \geq f^\ideal (\beta,\rho, \vectt{\rho} ).
\end{equation}
\end{lemma}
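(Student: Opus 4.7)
The plan is to exploit the cluster decomposition of configurations and then drop the excluded-volume interaction between distinct clusters, producing the desired \emph{ideal gas of clusters} bound. Because $f(\beta,\rho,\vect{\rho})=\sup_{j\in\N} f_j(\beta,\rho,\rho_1,\dots,\rho_j)$ by the Dawson--Gärtner construction in Section~\ref{sec-finishLDP}, it suffices to produce, for each $j\in\N$, a suitable lower bound on $f_j(\beta,\rho,\rho_1,\dots,\rho_j)$ and then pass to the limit $j\to\infty$.

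The first step is to write, for each sequence $N,N_1^\ssup{N},\dots,N_j^\ssup{N}$ with the prescribed densities, the constrained partition function as a sum over set partitions of $\{1,\dots,N\}$ whose block sizes are compatible with $(N_1^\ssup{N},\dots,N_j^\ssup{N})$ and some unrestricted block sizes $> j$ accounting for $N_\infty^\ssup{N}:=N-\sum_{k=1}^j k N_k^\ssup{N}$ particles. For a fixed partition with blocks $B_1,\dots,B_M$, dropping the requirement that the blocks be mutually non-$R$-connected yields the factorised upper bound $\prod_i \int_{\Lambda^{|B_i|}}\e^{-\beta U_{|B_i|}}\1\{B_i\text{ connected}\}\,\dd\vect{x}_{B_i}$, and each factor is bounded by $|\Lambda|\,|B_i|!\,Z_{|B_i|}^\cl(\beta)$ by translation invariance and by dropping the constraint that all particles lie in $\Lambda$. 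Counting the number of set partitions gives
\begin{equation*}
Z_\Lambda(\beta,N,N_1^\ssup{N},\dots,N_j^\ssup{N})
\leq
\Bigl[\prod_{k=1}^j\frac{(|\Lambda|Z_k^\cl(\beta))^{N_k^\ssup{N}}}{N_k^\ssup{N}!}\Bigr]
\cdot W_\Lambda^{>j}(\beta,N_\infty^\ssup{N}),
\end{equation*}
where $W_\Lambda^{>j}(\beta,n):=\sum_{(m_k)_{k>j}:\sum km_k=n}\prod_{k>j}(|\Lambda|Z_k^\cl(\beta))^{m_k}/m_k!$ collects the contributions of clusters of size larger than $j$.

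In the second step I take logarithms, divide by $\beta|\Lambda|$, apply Stirling's formula to each $N_k^\ssup{N}!$, and pass to the thermodynamic limit, using \eqref{eq:cfe-limit}. The product factor contributes exactly $\sum_{k=1}^j[k\rho_k f_k^\cl(\beta)+\beta^{-1}\rho_k(\log\rho_k-1)]$. For $W_\Lambda^{>j}$, fix $\eps>0$. Since $f_\infty^\cl(\beta)=\liminf_{k\to\infty} f_k^\cl(\beta)$, there is $K=K(\eps)$ so that $Z_k^\cl(\beta)\leq \exp(-\beta k(f_\infty^\cl(\beta)-\eps))$ for all $k\geq K$; we choose $j\geq K$. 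Using Stirling and the Hardy--Ramanujan bound $\e^{O(\sqrt{N})}$ on the number of integer partitions of $N_\infty^\ssup{N}$, one obtains $W_\Lambda^{>j}(\beta,N_\infty^\ssup{N})\leq\exp(-\beta N_\infty^\ssup{N}(f_\infty^\cl(\beta)-\eps)+o(|\Lambda|))$. Collecting these estimates,
\begin{equation*}
f_j(\beta,\rho,\rho_1,\dots,\rho_j)\;\geq\;\sum_{k=1}^j\Bigl[k\rho_k f_k^\cl(\beta)+\smfrac{1}{\beta}\rho_k(\log\rho_k-1)\Bigr]+\Bigl(\rho-\sum_{k=1}^j k\rho_k\Bigr)(f_\infty^\cl(\beta)-\eps).
\end{equation*}

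Finally, I take the supremum over $j$ and then let $\eps\downarrow 0$. Since the first two terms on the right are monotone in $j$ (the tail $\sum_{k>j}$ is nonnegative up to the adjustment of the $f_\infty^\cl$ term), and since $\rho-\sum_{k=1}^j k\rho_k\to \rho-\sum_{k\in\N}k\rho_k\geq 0$ as $j\to\infty$, the $\sup_j$ recovers $f^\ideal(\beta,\rho,\vect{\rho})-\eps(\rho-\sum_{k}k\rho_k)$, and letting $\eps\downarrow 0$ finishes the proof. The main obstacle is the control of the large-cluster sector: because $f_\infty^\cl$ is only a $\liminf$, uniform lower bounds on $f_k^\cl(\beta)$ are available only for $k$ large, which forces one to couple the truncation parameter $j$ to the accuracy $\eps$ before passing to the thermodynamic limit; the combinatorial estimate on the number of ways to partition $N_\infty^\ssup{N}$ into blocks of size $>j$ must be shown to be subexponential in $|\Lambda|$ so as not to pollute the free-energy bound.
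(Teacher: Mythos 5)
Your proposal takes a genuinely different route from the paper. The paper bounds the \emph{fully} constrained partition function $Z_\Lambda(\beta,N,N_1,\ldots,N_N)$ (all cluster sizes specified), obtains a clean upper bound $\exp(-\beta|\Lambda|f^\ideal)$ via Stirling's $n!\geq(n/e)^n$, then sums over the Hardy--Ramanujan-many integer partitions to estimate $\P_{\beta,\Lambda}^\ssup{N}(\vect{\rho}_\Lambda\in\mathcal{O})$, and finally deduces $\inf_\mathcal{O}f(\beta,\rho,\cdot)\geq\inf_{\overline{\mathcal{O}}}f^\ideal(\beta,\rho,\cdot)$ from the LDP lower bound plus compactness of $M_\rho$, sending $\mathcal{O}\searrow\{\vect{\rho}\}$ at the end using lower semicontinuity. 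You instead work with the $j$-truncated finite-dimensional marginals $f_j$ and exploit $f=\sup_j f_j$, which forces you to control the contribution $W_\Lambda^{>j}$ from clusters of size $>j$. The two decompositions do not coincide, and the excluded-volume-dropping step and Hardy--Ramanujan count are common to both.

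The genuine gap is in your estimate on $W_\Lambda^{>j}$. You assert that Stirling plus Hardy--Ramanujan give
\begin{equation*}
W_\Lambda^{>j}(\beta,N_\infty^\ssup{N})\leq\exp\bigl(-\beta N_\infty^\ssup{N}(f_\infty^\cl(\beta)-\eps)+o(|\Lambda|)\bigr),
\end{equation*}
but this fails. Writing $M'=\sum_{k>j}m_k$ for the number of large clusters, the positional-entropy factor $\prod_{k>j}|\Lambda|^{m_k}/m_k!$ is controlled by $\sum_{k>j}m_k(1+\log(|\Lambda|/m_k))$, and an application of the elementary entropy bound (as in Lemma~\ref{lem:entropy}) gives $\sum_{k>j}m_k(1+\log(|\Lambda|/m_k))\leq M'\bigl(2+\log(N_\infty^\ssup{N}|\Lambda|/M'^2)\bigr)$. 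Since $M'$ can be as large as $N_\infty^\ssup{N}/(j+1)\sim c|\Lambda|/(j+1)$, this quantity is of order $|\Lambda|\cdot\log j/j$, which is $\Theta(|\Lambda|)$ at every fixed $j$, not $o(|\Lambda|)$ (a concrete example: $m_{j+1}=N_\infty^\ssup{N}/(j+1)$ and $m_k=0$ otherwise gives a factor $\sim(e(j+1)/c)^{c|\Lambda|/(j+1)}$, already exponential in $|\Lambda|$). Consequently your claimed inequality
\begin{equation*}
f_j(\beta,\rho,\rho_1,\dots,\rho_j)\geq\sum_{k=1}^j\bigl[k\rho_k f_k^\cl(\beta)+\smfrac1\beta\rho_k(\log\rho_k-1)\bigr]+\bigl(\rho-\smfrac{}{}\sum_{k=1}^j k\rho_k\bigr)(f_\infty^\cl(\beta)-\eps)
\end{equation*}
is off by an error of size $O(\log j/j)$. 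Your strategy is nonetheless salvageable, because this error vanishes as $j\to\infty$; if you write $f_j\geq\mathrm{RHS}_j-C\log j/j$ for each fixed $j$, you can still send $j\to\infty$ (and then $\eps\downarrow 0$) to recover $f\geq f^\ideal$. But as written, the step asserting the $W^{>j}$ contribution is $\exp(o(|\Lambda|))$ is incorrect, and the \lq obstacle\rq\ you flag at the end --- subexponentiality of the \emph{number} of partitions --- is not the relevant quantity: Hardy--Ramanujan does hold, but the offending factor is the cluster-positioning term $|\Lambda|^{M'}/\prod_k m_k!$, which is what produces the $O(|\Lambda|\log j/j)$ contribution. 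The paper avoids this issue entirely by never truncating: with all $N_k$ specified, there is no sum over $(m_k)_{k>j}$ and no such combinatorial error to track.
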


\begin{pf}
Recall the definition~\eqref{eq:constrained-pf} of the constrained
partition functions $Z_\Lambda(\beta,N,N_1,\ldots,N_N)$. We show first that
%
%
\begin{equation}
\label{eq:constrained-pf-bound} Z_\Lambda(\beta,N,N_1,\ldots,N_N)
\leq\prod_{k=1}^N \frac{(|\Lambda|
Z_k^\cl(\beta))^{N_k}}{N_k!},
\end{equation}
for all $N,N_1,\ldots,N_N \in\N_0$ with $\sum_{k=1}^N k N_k = N$. Fix
such a vector $(N,N_1,\break \ldots,  N_N)$. Let $\vect{x}=(x_1,\ldots,x_N) \in
\Lambda^N$ with $N_1$ clusters of size $1$, $N_2$ clusters of size~$2$,
etc. Consider the graph with vertices $\{1,\ldots,N\}$ and edges those
$\{i,j\}, i \neq j$,
where $|x_i - x_j | \leq R$. The graph splits into connected
components; this induces a partition $\mathcal{I}(\vect{x})$ of the
index set $\{1,\ldots,N\}$. The set partition has $N_1$ sets of size
$1$, $N_2$ sets of size $2$, etc. Let $\mathcal{J}= \mathcal
{J}((N_k)_k)$ be the collection of such set partitions of $\{1,\ldots,N\}$. Note that the integral of $\e^{-\beta U_N}$ over $\{\vect
{x}\dvtx\mathcal{I}(\vect{x})=\mathcal{I}\}$ does not depend on
$\mathcal{I}\in\mathcal{J}$. The cardinality of $\mathcal{J}$ is
\[
|\mathcal{J}| = \frac{N!}{ \prod_{k=1}^N  (N_k! k!^{N_k} )}.
\]
Therefore, for any $\mathcal{I}^\ssup{0}\in\mathcal{J}$, we may write
\begin{eqnarray*}
Z_\Lambda(\beta,N,N_1,\ldots,N_N) &=&
\frac{1}{N!} \sum_{\mathcal{I} \in
\mathcal{J}} \int
_{\Lambda^N} \e^{-\beta U_N(\vect{x})} \1 \bigl\{ \mathcal{I}(\vect{x}) =
\mathcal{I} \bigr\} \,\dd\vect{x}
\\
& =& \frac{1}{\prod_{k=1}^N  (N_k! k!^{N_k} )} \int_{\Lambda^N} \e^{-\beta U_N(\vect{x})} \1 \bigl
\{\mathcal{I}(\vect{x}) = \mathcal {I}^\ssup{0} \bigr\} \,\dd\vect{x}.
\end{eqnarray*}
The indicator function in the last integral can be upper bounded by
dropping the requirement that clusters have mutual distance $\geq R$.
This leads to a product of indicator functions, one for each cluster,
encoding that the cluster is connected and stays inside $\Lambda$.
Noting that
\[
\frac{1}{k!} \int_{\Lambda^N} \e^{-\beta U(x_1,\ldots,x_k)} \1 \bigl\{\{
x_1,\ldots,x_k\} \mbox{ connected} \bigr\} \,\dd
x_1\cdots\dd x_k \leq |\Lambda| Z_k^\cl(
\beta)
\]
(integrate first over $x_2,\ldots,x_k$ at fixed $x_1$, and then over
$x_1$), we deduce equation~\eqref{eq:constrained-pf-bound}.

Next, we note that $n! \geq(n/\e)^n$ for all $n\in\N$. Therefore,
\eqref{eq:constrained-pf-bound} gives that
%
%
\begin{equation}
\label{eq:constr-ub2} \qquad Z_\Lambda(\beta,N,N_1,\ldots,N_N)
\leq\exp \biggl( - \beta|\Lambda| f^\ideal \biggl( \beta,
\frac{N}{|\Lambda|}, \biggl( \frac
{N_k}{|\Lambda|} \biggr)_{k\in\N} \biggr)
\biggr),
\end{equation}
where we have set $N_k=0$ for $k \geq N+1$, and $f^\ideal$ is defined
in \eqref{eq:fideal-def}.

Now we turn to a lower bound for the rate function. Let $\mathcal{O}
\subset M_\rho$ be an open set. For $N\in\N$, let $\vectt{\rho}^\ssup
{N}$ be a cluster size distribution in $M_\rho$ of the form $\rho
_k^\ssup{N} = N_k / |\Lambda_N|$ with integer $N_k$, and minimizing
$f^\ideal(\beta,N/|\Lambda|, \vectt{\rho})$ among distributions of this
type. Summing equation~\eqref{eq:constr-ub2} over partitions related to
$\mathcal{O}$, we obtain
\begin{eqnarray*}
- \inf_{\mathcal{O}} I_{\beta,\rho} &\leq&\liminf
_{N\to\infty}\frac
{1}{|\Lambda_N|}\log\P_{\beta,\Lambda_N}^\ssup{N}(
\vectt{\rho }_{\Lambda_N} \in\mathcal{O})\\
& \leq& - \beta\liminf
_{N\to\infty} f^\ideal \biggl(\beta,\frac{N}{|\Lambda_N|},
\vectt{\rho}^\ssup{N} \biggr) + \beta f(\beta,\rho).
\end{eqnarray*}
We have used that the number of integer partitions of $N$, by the
Hardy--Ramanujan formula, is of order $\exp(O(\sqrt{N}))$ and therefore
does not contribute at the exponential scale considered here. Since
$M_\rho$ is compact, we may assume, up to choosing subsequences, that
$\rho_k^\ssup{N} \to\rho_k$ for all $k$, that is, $\vectt{\rho}^\ssup{N}$
converges to some $\vectt{\rho} \in M_\rho$. Since the functional $(\rho, \vectt{\rho})\mapsto f^\ideal(\beta,\rho,\vectt{\rho})$ is lower
semi-continuous, it follows that, along the chosen subsequence,
\[
f^\ideal(\beta,\rho,\vectt{\rho}) =\liminf_{N\to\infty}
f^\ideal \biggl(\beta,{\frac{N}{|\Lambda_N|}}, \vectt{\rho}^\ssup{N}
\biggr) \geq\inf_{\overline{\mathcal{O}}} f^\ideal(\beta,\rho,\cdot).
\]
We deduce
\[
\inf_{\mathcal{O}} f(\beta,\rho,\cdot) \geq\inf_{\overline{\mathcal
{O}}}
f^\ideal(\beta,\rho,\cdot),
\]
for every open set $\mathcal{O}\subset M_\rho$. To conclude, for $\vectt
{\rho} \in M_\rho$, noting that $M_\rho$ is metrizable, we can choose
open environments $\mathcal{O} \searrow\{\vectt{\rho}\}$ and complete
the proof by exploiting the lower semi-continuity of $f^\ideal(\beta,\rho,\cdot)$.
\end{pf}

Our second bound controls the error when dropping the excluded-volume
effect. This was much easier in \cite{CKMS10} and was hidden in the
proof of Proposition 2.2 there.

%
\begin{prop}[(Upper bound)] \label{prop:ub} For each $k \in\N$, let
$a_k>0$ be
such that $(a_k+R)^d < k/ \rho$. Then, for any $\vectt{\rho}=(\rho
_k)_{k\in\N}$,
%
%
\begin{eqnarray}
\label{ffaesti} %
f (\beta,\rho, \vectt{\rho} ) & \leq&
\sum_{k\in\N} k \rho_k f_k^{\cl,a_k}(
\beta) + \biggl(\rho- \sum_{k \in\N} k \rho_k
\biggr) f_\infty^\cl(\beta,\rho) + \frac{1}\beta\sum
_{k\in\N}\rho_k \log\rho
\nonumber
\\[-8pt]
\\[-8pt]
\nonumber
&&{} + \frac{1}\beta\sum_{k \in\N}
\rho_k \biggl( - \log\biggl(1- \frac\rho k (a_k+R)^d
\biggr) +\log\biggl(1+\frac{R}{a_k}\biggr)^d \biggr).
\end{eqnarray}
\end{prop}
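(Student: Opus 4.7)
The plan is to construct a specific family of configurations in $\Lambda$ whose contribution to the partition function yields a lower bound for $Z_\Lambda(\beta,N)$, equivalently an upper bound for $f(\beta,\rho,\vect{\rho})$. First, I would reduce by approximation to the case of finitely supported $\vect{\rho}$: fix $K \in \N$ and consider $\vect{\rho}^\ssup{K}$ with $\rho_k^\ssup{K}=\rho_k$ for $k\leq K$ and $\rho_k^\ssup{K}=0$ for $k>K$; the general case then follows by taking $K\to\infty$ and invoking the lower semi-continuity of $f(\beta,\rho,\cdot)$ together with the dominated-convergence-type continuity of each term on the right-hand side of \eqref{ffaesti}.

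Given the reduction, pick integer sequences $N_k^\ssup{N}$ with $N_k^\ssup{N}/|\Lambda_N|\to\rho_k$ for $k\leq K$, and set $N_\infty^\ssup{N}:=N-\sum_{k\leq K} k N_k^\ssup{N}$, so that $N_\infty^\ssup{N}/|\Lambda_N|\to\rho_\infty:=\rho-\sum_k k\rho_k$. Partition $\Lambda_N$ into disjoint sub-boxes $\Lambda_1,\ldots,\Lambda_K,\Lambda_\infty$ with $|\Lambda_k|=k N_k^\ssup{N}/\rho$ and $|\Lambda_\infty|=N_\infty^\ssup{N}/\rho$, which is possible up to a boundary correction of order $|\Lambda_N|^{(d-1)/d}$. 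Tile each $\Lambda_k$ with a cubic grid of cells of side $a_k+R$, producing $n_k^\ssup{N}:=\lfloor|\Lambda_k|/(a_k+R)^d\rfloor$ cells. The assumption $(a_k+R)^d<k/\rho$ guarantees $n_k^\ssup{N}\geq N_k^\ssup{N}$, and the buffer of width $R$ between neighbouring cells ensures that any two $k$-clusters placed in distinct cells and confined to the inner sub-box of side $a_k$ are automatically at distance $\geq R$, hence form distinct connected components. In $\Lambda_\infty$, place all remaining $N_\infty^\ssup{N}$ particles as a single connected cluster filling the cube.

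This construction gives the lower bound
\begin{equation*}
Z_{\Lambda_N}(\beta,N)\geq\prod_{k=1}^{K}\binom{n_k^\ssup{N}}{N_k^\ssup{N}}\bigl(a_k^d\,Z_k^{\cl,a_k}(\beta)\bigr)^{N_k^\ssup{N}}\cdot|\Lambda_\infty|\,Z_{N_\infty^\ssup{N}}^{\cl,L_\infty}(\beta),
\end{equation*}
with $L_\infty:=(N_\infty^\ssup{N}/\rho)^{1/d}$. Taking logarithms, dividing by $\beta|\Lambda_N|$, and passing to the limit $N\to\infty$ (choosing $N_\infty^\ssup{N}$ along a subsequence realising the $\limsup$ in the definition of $f_\infty^\cl(\beta,\rho)$) yields the contributions $\sum_k k\rho_k f_k^{\cl,a_k}(\beta)$ and $\rho_\infty f_\infty^\cl(\beta,\rho)$ from the cluster partition functions, while the term $\frac{1}{\beta}\sum_k\rho_k\log\rho$ emerges from $|\Lambda_k|=k N_k^\ssup{N}/\rho$. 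The correction $d\log(1+R/a_k)=\log((a_k+R)^d/a_k^d)$ encodes the ratio of the cell volume to the cluster sub-box volume. Finally, the binomial coefficient is estimated via $\binom{n}{m}\geq((n-m)/m)^m$, yielding
\begin{equation*}
\tfrac{1}{|\Lambda_N|}\log\binom{n_k^\ssup{N}}{N_k^\ssup{N}}\;\geq\;\rho_k\log\tfrac{n_k^\ssup{N}-N_k^\ssup{N}}{N_k^\ssup{N}}\;\longrightarrow\;\rho_k\log\tfrac{(1-\rho V_k/k)}{\rho V_k/k},
\end{equation*}
which after combining with the $a_k^d$ prefactor produces precisely $\rho_k\log((1-\rho V_k/k) a_k^d/(\rho V_k))$, matching the remaining terms in \eqref{ffaesti}.

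The main obstacle is the careful bookkeeping: ensuring that the $O(|\Lambda_N|^{(d-1)/d})$ boundary corrections from both the outer partition of $\Lambda_N$ and the inner grid discretization of each $\Lambda_k$ are subleading; correctly selecting $N_\infty^\ssup{N}$ so that $\frac{1}{|\Lambda_N|}\log Z_{N_\infty^\ssup{N}}^{\cl,L_\infty}(\beta)\to-\beta\rho_\infty f_\infty^\cl(\beta,\rho)$ along a realising subsequence; and handling the case $\rho_\infty=0$ separately (where no $\Lambda_\infty$ is needed).
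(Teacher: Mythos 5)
Your proof takes a genuinely different route from the paper's. The paper exploits the convexity of $f(\beta,\rho,\cdot)$ (from Theorem~\ref{thm:ldp}) and the affine dependence of the right-hand side of \eqref{ffaesti} on $\vect{\rho}$ to reduce the claim to the two extreme points $\vect{\rho}=\frac{\rho}{k}\vect{e}^{\ssup k}$ (only $k$-clusters) and $\vect{\rho}=\mathbf{0}$ (one macroscopic cluster); each extreme case then calls for a single-size tiling of the whole box $\Lambda_N$, and the conclusion is drawn via the LDP together with lower semi-continuity. You instead build configurations realizing a general finitely supported $\vect{\rho}$ in one pass, splitting $\Lambda_N$ into $K+1$ regions of prescribed volume and tiling each separately. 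Both work; the convexity reduction buys a much simpler construction once the LDP is in place, while your direct construction is more explicit but needs the extra approximation to finite support and the multi-box bookkeeping you acknowledge.

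Two points need filling in. First, your displayed inequality should be for the \emph{constrained} partition function $Z_{\Lambda_N}(\beta,N,N_1^{\ssup N},\ldots,N_j^{\ssup N})$, not $Z_{\Lambda_N}(\beta,N)$: the configurations you count realize the prescribed cluster-size profile, and it is the constrained quantity that controls $f(\beta,\rho,\vect{\rho})$; a lower bound on $Z_{\Lambda_N}(\beta,N)$ only controls $f(\beta,\rho)$. Second, ``passing to the limit'' conceals a genuine step: a lower bound on $\frac{1}{|\Lambda_N|}\log Z_{\Lambda_N}(\beta,N,N_1^{\ssup N},\ldots,N_j^{\ssup N})$ yields an upper bound on $f_j(\beta,\rho,\rho_1,\ldots,\rho_j)$, and hence on $f(\beta,\rho,\vect{\rho})=\sup_j f_j$, only by way of Proposition~\ref{prop:constrained-free-energy} (where on $\partial\Delta_j$ the thermodynamic-limit relation degenerates to an inequality) or, as the paper does, by routing through the LDP upper bound for closed sets together with the lower semi-continuity of $f(\beta,\rho,\cdot)$ (the argument surrounding \eqref{eq:only-large}); one of these mechanisms must be invoked explicitly. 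Finally, a minor algebra slip: combining $\binom{n}{m}\geq((n-m)/m)^m$ with the $a_k^d$ prefactor actually gives a contribution that exceeds the one needed in \eqref{ffaesti} by $\rho_k\log k\geq 0$ --- this is in the favorable direction, so the inequality still follows, but the asserted exact match does not hold as written.
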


\begin{pf}
We first remark that it is enough to show \eqref{ffaesti} for $\vectt
{\rho}$ replaced by $\frac\rho k \mathbf{e}^\ssup{k}$ for any
$k\in\N$ [where $\mathbf{e}^{\ssup k} = (\delta_{k,j})_{j \in\N}$]
and for $\vectt{\rho}$ replaced by $\mathbf{0}$, the sequence consisting
of zeros. Indeed, recall from Theorem~\ref{thm:ldp} that $f(\beta,\rho,\cdot)$ is convex, note that an arbitrary $\vectt{\rho}$ can be written
as the convex combination
\[
(\rho_k)_{k\in\N} = \sum_{k \in\N}
\frac{k \rho_k}{\rho} {\frac
{\rho} {k}}\mathbf{e}^\ssup{k} + \biggl( 1 -
\sum_{k\in\N} \frac
{k\rho_k}{\rho} \biggr) \mathbf{0},
\]
and note that the right-hand side of \eqref{ffaesti} is affine in $\vectt
{\rho}$. Hence, we only have to show that
%
%
\begin{eqnarray}
\label{eq:one-finite-size} &&f \biggl(\beta,\rho, {\frac\rho k \mathbf{e}^\ssup{k}}
\biggr)\nonumber\\
&&\qquad \leq\rho f_k^{\cl,a_k}(\beta) + \frac{1}\beta
\frac{\rho}{k} \log\rho + \frac{1}\beta\frac{\rho}{k} \biggl( -
\log \biggl(1- \frac\rho k (a_k+R)^d \biggr)
\\
\nonumber&&\qquad\quad{}+\log \biggl(
\biggl(1+\frac{R}{a_k}\biggr)^d \biggr) \biggr),\qquad
k\in \N,
\end{eqnarray}
and that
%
%
\begin{equation}
\label{eq:only-large} f (\beta,\rho, \mathbf{0} ) \leq\rho f_\infty^\cl(
\beta,\rho).
\end{equation}
We now prove \eqref{eq:only-large}. Let $\mathcal{O} \subset M_\rho$ be
an open set containing
$\mathbf{0}$, and $\overline{\mathcal{O}}$ its closure. By the LDP,
\[
\limsup_{N\to\infty} \frac{1}{|\Lambda_N|} \log\P_{\beta,\Lambda
_N}^\ssup{N}(
\bolds{\rho}_{\Lambda_N} \in\overline{\mathcal{O}}) \leq- \inf
_{\overline{\mathcal{O}}} I_{\beta,\rho}.
\]
For $N \in\N$, consider the cluster size distribution obtained by
putting all particles
into one large cluster, $\rho_1^\ssup{N} = \cdots= \rho_{N-1}^\ssup{N}=0$,
$\rho_N^\ssup{N} = 1/|\Lambda_N|$. Note that $\bolds{\rho}^\ssup
{N}=(\rho_k^\ssup{N})_{k\in\N}$ lies in $M_\rho$ for any $N\in\N$. We
have $\bolds{\rho}^\ssup{N} \to0$ as
$N \to\infty$ and thus $\bolds{\rho}^\ssup{N} \in\mathcal{O}
\subset\overline{\mathcal{O}}$ for sufficiently large $N$. As a
consequence, we can lower bound
\[
\P_{\beta,\Lambda_N}^\ssup{N}( \bolds{\rho}_{\Lambda_N} \in
\overline{\mathcal{O}}) \geq\P_{\beta,\Lambda_N}^\ssup{N}\bigl( \bolds{
\rho}_{\Lambda_N} = \bolds{\rho}^\ssup{N}\bigr) = \frac{|\Lambda_N| Z_N^{\cl,L_N}(\beta)}{Z_{\Lambda_N}(\beta,N)}.
\]
Recalling that $|\Lambda_N|=N/\rho$, it follows that
\[
- \inf_{\overline{\mathcal{O}}} I_{\beta,\rho} \geq\limsup
_{N\to\infty
} \frac{1}{|\Lambda_N|} \log\frac{|\Lambda_N| Z_N^{\cl,L_N}(\beta
)}{Z_{\Lambda_N}(\beta,N)} \geq- \beta
\rho f_\infty^\cl(\beta,\rho) + \beta f(\beta,\rho).
\]
Since $I_{\beta,\rho}(\cdot)=\beta f(\beta,\rho,\cdot)-\beta f(\beta,\rho)$, this implies $\inf_{\overline{\mathcal{O}}} f(\beta,\rho,\cdot
) \leq\rho f_\infty^\cl(\beta,\rho)$.
This holds for all open sets $\mathcal{O}$ containing $\mathbf{0}$.
Letting $\mathcal{O} \searrow\{\mathbf{0}\}$ and using the lower
semi-continuity of $f(\beta,\rho,\cdot)$, we deduce \eqref{eq:only-large}.

Now let us turn to \eqref{eq:one-finite-size}. We proceed in a way
that is analogous to Lemma~\ref{lem:nonempty-int}. Fix $k \in\N$. Let
$N$ be a multiple of $k$.
Consider the cluster size distribution obtained by putting all
particles into clusters of size
$k$, that is, put $N_j^\ssup{N}= (N/k) \delta_{j,k}$ for $j\in\N$. We
divide the box $\Lambda_N$ into
$\ell_N$ boxes of side length $a_k$ with mutual distance at least $R$.
Hence, $\ell_N\sim\frac{N}\rho(a_k+R)^{-d}$. The assumption $(a_k+R)^d
<k/\rho$ guarantees that $\ell_N>N/k$ for sufficiently large $N$.
Therefore, we can lower bound
\[
Z_{\Lambda_N}\bigl(\beta,N,N_1^\ssup{N},
\ldots,N_N^\ssup{N}\bigr) \geq\pmatrix{\ell_N
\cr
N/k} \bigl( a_k^d Z_k^{\cl,a_k}(\beta)
\bigr)^{N/k}.
\]
Therefore, using that $|\Lambda_N|=N/\rho$ and Stirling's formula,
%
%
\begin{eqnarray}
\label{Zbound} %
&&\liminf_{N\to\infty}
\frac{1}{|\Lambda_N|} \log Z_{\Lambda_N}\bigl(\beta,N,N_1^\ssup{N},
\ldots,N_N^\ssup{N}\bigr)
\nonumber
\\[-8pt]
\\[-8pt]
\nonumber
&&\qquad
\geq\frac{\rho}{k} \log Z_k^{\cl,a_k}(\beta) -
\frac{\rho}{k} \log \rho + \frac{\rho}{k} \log \biggl( \frac{a_k^d}{(a_k+R)^d}
- \frac{\rho
a_k^d}{k} \biggr).
\end{eqnarray}
Multiplying the right-hand side with $- \beta^{-1}$, the right-hand
side of \eqref{eq:one-finite-size} arises. In the same way as in the
proof of \eqref{eq:only-large}, one derives, with the help of Lemma~\ref
{lem-Delta}, that $f(\beta,\rho,\frac\rho k \vect{e}^{\ssup k})$ is not
larger than $- \beta^{-1}$ times the left-hand side of \eqref{Zbound}.
This ends the proof of \eqref{eq:one-finite-size}.
\end{pf}

\section{Bounds for the cluster free energy}\label{sec-finftyfinite}

In this section we give some more bounds that will later be
used in the proofs of Theorems~\ref{thm:main} and \ref{thm:unif}. We
further estimate some entropy terms, and we give bounds that control
the replacement of temperature-depending terms by the corresponding
ground-state terms. Throughout this section we assume that the pair
potential $v$ satisfies Assumption \ref{assV}.

We will later replace the term $\sum_k \rho_k (\log\rho_k - 1)$ in
$f^\ideal(\beta,\rho, (\rho_k)_k)$ by $\sum_k \rho_k \log\rho_k$. To
this aim the following will be useful.

%
\begin{lemma}[(Entropy bound)] \label{lem:entropy}
For any probability distribution $(p_k)_{k\in\N}$ on $\N$,
\[
0 \leq-\sum_{k\in\N} p_k \log
p_k \leq1+ \log\sum_{k\in\N} k
p_k.
\]
\end{lemma}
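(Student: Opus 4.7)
The lower bound is immediate, since each term $-p_k \log p_k$ is non-negative on $[0,1]$. For the upper bound, the natural strategy is to exploit the fact that the maximum-entropy probability distribution on $\N$ with prescribed mean is a geometric distribution, and then to estimate the resulting entropy by elementary means.

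Set $\mu := \sum_{k\in\N} k p_k$. Since $(p_k)$ is supported on $\N = \{1,2,\dots\}$, we have $\mu \geq 1$. In the trivial case $\mu = 1$ the distribution must be $p_k = \delta_{k,1}$, the entropy is zero, and the inequality $0 \leq 1 + \log 1$ holds; so we may assume $\mu > 1$. The plan is to apply Gibbs' inequality
\begin{equation*}
   -\sum_{k\in\N} p_k \log p_k \leq -\sum_{k\in\N} p_k \log q_k
\end{equation*}
(valid for any probability distribution $(q_k)$ on $\N$ with $q_k > 0$ whenever $p_k > 0$) to the geometric reference distribution
\begin{equation*}
   q_k := (1-r)\, r^{k-1}, \qquad k\in\N,
\end{equation*}
with parameter $r := (\mu-1)/\mu \in (0,1)$, so that $1-r = 1/\mu$ and the mean of $(q_k)$ is exactly $\mu$.

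A direct computation gives
\begin{equation*}
   -\sum_{k\in\N} p_k \log q_k
   = -\log(1-r) - (\log r)\sum_{k\in\N}(k-1)\,p_k
   = \log \mu + (\mu-1)\log\!\Bigl(1 + \frac{1}{\mu-1}\Bigr).
\end{equation*}
The elementary inequality $\log(1+x) \leq x$ for $x > 0$, applied with $x = 1/(\mu-1)$, bounds the last term by $1$, so combining with Gibbs' inequality yields
\begin{equation*}
   -\sum_{k\in\N} p_k \log p_k \leq 1 + \log \mu,
\end{equation*}
which is the desired upper bound. Since every step is a one-line computation, there is no serious obstacle; the only delicate point is remembering that $\N$ starts at $1$ (so $\mu \geq 1$) and handling the degenerate case $\mu = 1$ separately.
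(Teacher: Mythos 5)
Your proof is correct and follows essentially the same idea as the paper, namely comparing with a geometric distribution of the same mean and then invoking an elementary logarithmic inequality. The one difference is that you make the comparison rigorous via Gibbs' inequality (relative entropy nonnegativity), whereas the paper simply asserts that the geometric distribution maximizes entropy at fixed mean and computes its entropy directly; your version is thus a bit more self-contained, and the final bounds ($\log(1+x)\leq x$ here, $x\log x\geq x-1$ there) are equivalent.
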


\begin{pf} We may assume that the expectation $\sum_{k\in\N}kp_k$ is finite.
It is elementary to see that the maximizer of the entropy among the
set of
probability distributions with a given finite expectation is a
geometric distribution.
For $p_k = (1-u) u^{k-1}$, $k \in\N$, for some $u\in(0,1)$, the
expectation is
$\sum_{k\in\N} k p_k= 1/(1-u)$, and the entropy is
\begin{eqnarray*}
-\sum_{k\in\N}p_k \log p_k & =&
- \log(1-u) - (1-u) \sum_{k\in\N} u^{k-1} (k-1)
\log u
\\
& = &- \log(1-u) - \frac{u \log u}{1-u} = \log\sum_{k\in\N}kp_k
+ \frac
{u \log u}{u-1}. 
\end{eqnarray*}
We conclude by observing that $x \log x \geq x-1$ for all $x >0$ and
recalling that $u<1$.
\end{pf}

%
\begin{lemma} \label{lem:entropy2}
For any $\rho\in(0,\infty)$ and any $\vectt{\rho}=(\rho_k)_{k\in\N}\in
M_\rho$,
\[
\sum_{k\in\N} \rho_k \log\frac{\rho_k}{\rho}
\geq- 2 \rho.
\]
\end{lemma}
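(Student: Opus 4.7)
The plan is to reduce the bound to the entropy inequality of Lemma~\ref{lem:entropy} by factoring out the total cluster density. Set $s:=\sum_{k\in\N}\rho_k$. Since $(\rho_k)_{k\in\N}\in M_\rho$, we have $s \leq \sum_{k\in\N} k\rho_k \leq \rho$. If $s=0$, then every $\rho_k$ vanishes and, with the usual convention $0\log 0=0$, the left-hand side equals $0$, so the inequality is trivial. Henceforth I assume $s>0$.

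Define the probability distribution $p_k := \rho_k/s$ on $\N$. A direct expansion gives
\begin{equation*}
  \sum_{k\in\N}\rho_k\log\frac{\rho_k}{\rho}
  = s\log\frac{s}{\rho} + s\sum_{k\in\N}p_k\log p_k.
\end{equation*}
For the entropy term I apply Lemma~\ref{lem:entropy}: since $\sum_{k\in\N}kp_k = s^{-1}\sum_{k\in\N}k\rho_k \leq \rho/s$, we obtain
$$\sum_{k\in\N}p_k\log p_k \geq -1 - \log(\rho/s) = -1 + \log(s/\rho).$$
Substituting, with $u:=s/\rho\in(0,1]$,
\begin{equation*}
  \sum_{k\in\N}\rho_k\log\frac{\rho_k}{\rho} \geq 2s\log\frac{s}{\rho} - s = \rho\bigl(2u\log u - u\bigr).
\end{equation*}

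It remains to show $2u\log u - u \geq -2$ for $u\in(0,1]$, equivalently $g(u):=2u\log u - u + 2 \geq 0$. The function $g$ is continuous on $[0,1]$ with $g(0)=2$ and $g(1)=1$, and its unique critical point on $(0,1)$ is $u_*=\e^{-1/2}$ where $g(u_*)=2-2\e^{-1/2}>0$. Hence $g\geq 0$ throughout $[0,1]$, which finishes the proof. The main (and only) non-routine point is the elementary verification of this one-variable inequality; everything else is bookkeeping around Lemma~\ref{lem:entropy}.
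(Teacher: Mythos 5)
Your argument is correct and follows the same route as the paper's proof: factor out $s=\sum_k\rho_k$, apply Lemma~\ref{lem:entropy} to bound the entropy term, and reduce to an elementary one-variable inequality in $u=s/\rho$. The only cosmetic difference is that the paper settles the final step with the inequality $x\log x\geq x-1$ (applied once, then dropping the non-negative remainder) rather than by locating the critical point of $g(u)=2u\log u-u+2$.
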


\begin{pf} Put $m:= \sum_{k\in\N} \rho_k$ and $p_k:= \rho_k/ m$. Then
\begin{eqnarray*}
 \sum_{k\in\N}
\rho_k \log\frac{\rho_k}{\rho} &= &\sum_{k\in\N}
m p_k \log\frac{m p_k}{\rho} = m \log\frac{m}{\rho} + m \sum
_{k\in\N} p_k \log p_k
\\
&\geq& m \log\frac{m}{\rho}-m-m\log\sum_{k\in\N} k
p_k\geq2m\log\frac{m}\rho-m,
\end{eqnarray*}
where we applied Lemma~\ref{lem:entropy} and that $\sum_{k\in\N} k p_k
\leq\rho/ m$. Now use the inequality $x \log x \geq x-1$ and drop the
term $m$.
\end{pf}

In our bounds in Lemma~\ref{prop:lb} and Proposition~\ref{prop:ub}, we
will later replace the cluster-free energies with ground state
energies; in this section we give bounds that will allow us to control
the replacement error. We also prove that $f_\infty^\cl(\beta)$ and
$f_\infty^\cl(\beta,\rho)$ are finite.

%
\begin{lemma}[{[Lower bound for $f_k^\cl(\beta)$ and $f_\infty^\cl(\beta
)$]}] \label{lem:cfe-lower-bound}
There is a constant $C>0$ such that for all $\beta\in(0,\infty)$,
\[
f_k^\cl(\beta) \geq\frac{E_k}{k} -
\frac{C} \beta,\qquad k \in\N,\beta \in(0,\infty).
\]
In particular, $f_\infty^\cl(\beta) \geq e_\infty- \frac{C} \beta$ for
any $\beta\in(0,\infty)$.
\end{lemma}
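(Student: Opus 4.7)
The plan is to upper bound the cluster partition function $Z_k^\cl(\beta)$ by extracting the Boltzmann factor $\e^{-\beta E_k}$ at the ground state and then estimating the volume of $R$-connected configurations combinatorially, through a classical spanning-tree bound. Since $U_k(\vect{x}) \geq E_k$ for every $\vect{x} \in (\R^d)^k$ (the inequality being trivial wherever $U_k = +\infty$), pointwise $\e^{-\beta U_k(\vect{x})} \leq \e^{-\beta E_k}$, and consequently
\begin{equation*}
Z_k^\cl(\beta) \;\leq\; \frac{\e^{-\beta E_k}}{k!} \int_{(\R^d)^{k-1}} \mathbf{1}\bigl\{\{0,x_2,\ldots,x_k\}\text{ is }R\text{-connected}\bigr\}\, \d x_2\cdots \d x_k.
\end{equation*}

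Next I bound the remaining integral. Writing $x_1:=0$, every $R$-connected configuration $\{0, x_2, \ldots, x_k\}$ contains at least one spanning tree of its $R$-adjacency graph, so
\begin{equation*}
\mathbf{1}\{\text{connected}\} \;\leq\; \sum_{T} \prod_{\{i,j\} \in T} \mathbf{1}\{|x_i - x_j| \leq R\},
\end{equation*}
where $T$ runs over all labelled trees on the vertex set $\{1,\ldots,k\}$, of which there are $k^{k-2}$ by Cayley's formula. For each fixed $T$, root it at vertex $1$ and integrate $(x_2,\ldots,x_k)$ by successively peeling leaves: each leaf variable is constrained to a ball of radius $R$ around its already-fixed parent, contributing a factor at most $|B(0,R)|$. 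Hence the integral of the spanning-tree indicator is bounded by $|B(0,R)|^{k-1}$, and therefore
\begin{equation*}
Z_k^\cl(\beta) \;\leq\; \frac{k^{k-2}}{k!}\,|B(0,R)|^{k-1}\,\e^{-\beta E_k}.
\end{equation*}

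Stirling's bound $k! \geq (k/\e)^k$ implies $k^{k-2}/k! \leq \e^k/k^2 \leq \e^k$, so the right-hand side above is at most $\e^{Ck}\,\e^{-\beta E_k}$ for some constant $C>0$ depending only on $R$ and $d$. Taking logarithms, dividing by $-\beta k$ and invoking the definition~\eqref{freeencldef} yields the claim $f_k^\cl(\beta) \geq E_k/k - C/\beta$, uniformly in $k \in \N$ and $\beta>0$. The corresponding bound on $f_\infty^\cl(\beta)$ follows at once by taking $\liminf_{k\to\infty}$ and using that $E_k/k \to e_\infty$, as recalled at the start of Section~\ref{sec-dilutelimit}. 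No step here is delicate; the only point requiring care is the spanning-tree volume estimate, and that is a classical computation.
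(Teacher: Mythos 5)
Your proof is correct and follows essentially the same route as the paper's: extract the Boltzmann factor $\e^{-\beta E_k}$ using $U_k\geq E_k$, bound the volume of $R$-connected configurations via spanning trees and Cayley's formula to get $Z_k^\cl(\beta)\leq \e^{-\beta E_k}\,\frac{k^{k-2}}{k!}\,|B(0,R)|^{k-1}$, and finish with Stirling. The only cosmetic difference is that you bound the connectivity indicator by the sum over \emph{all} spanning trees of the $R$-graph (a tree-graph-type inequality, possibly overcounting), while the paper partitions the integration domain according to a single arbitrarily chosen spanning tree $T(\vect{x'})$; both yield the same $k^{k-2}|B(0,R)|^{k-1}$ bound, and your variant is if anything cleaner since it avoids making an arbitrary choice. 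The Stirling step you spell out is exactly what the paper summarises as ``the proof is easily concluded.''
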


\begin{pf} We follow \cite{CKMS10}, Section~2.4. First, note that
\[
Z^\cl_k (\beta) \leq\e^{-\beta E_k} \frac{1}{k!}
\thinspace\bigl | \bigl\lbrace(x_2,\ldots,x_k)\in\bigl(
\R^d\bigr)^{k-1} \dvtx\{ 0,x_2,
\ldots,x_k\}\ R\mbox{-connected} \bigr\rbrace\bigr |
\]
with $|\cdot|$ the Lebesgue volume. Now, with each $\vect{x}' =
(x_2,\ldots,x_k)$
such that $\vect{x}:=(0,\vect{x}')$ is $R$-connected, we can associate
a tree $T(\vect{x}')$
with vertex set $\{1,\ldots,k\}$ and edge set $E(T(\vect{x}'))\subset
\{ \{i,j\} \dvtx i \neq j\}$,
and such that
\[
\{i,j\} \in E\bigl(T\bigl(\vect{x}'\bigr)\bigr)
\quad\Longrightarrow\quad|x_i - x_j| \leq R.
\]
Note that for a given $\vect{x}'$, there are in general several trees
satisfying this
condition; we pick arbitrarily one of them and call it $T(\vect{x}')$.
Now we have
\begin{eqnarray*}
&&\hspace*{-4pt} \bigl| \bigl\lbrace\vect{x}' \in\bigl(\R^d
\bigr)^{k-1} \mid\bigl(0,\vect{x}'\bigr)\ R\mbox
{-connected} \bigr\rbrace \bigr|
\\
&&\hspace*{-6pt}\qquad = \sum_{T\ \mathrm{tree}} \bigl| \bigl\lbrace\vect{x}'
\in\bigl(\R^d\bigr)^{k-1} \mid\bigl(0,\vect{x}'
\bigr)\ R\mbox {-connected}, T\bigl(\vect{x}'\bigr)= T \bigr\rbrace\bigr |
\\
&&\hspace*{-6pt}\qquad \leq\sum_{T\ \mathrm{tree}} \bigl| \bigl\lbrace
\vect{x}' \in\bigl(\R^d\bigr)^{k-1} \mid
\bigl(0,\vect{x}'\bigr)\ R\mbox{-connected}, \{i,j\} \in E(T)
\Rightarrow|x_j-x_i| \leq R \bigr\rbrace\bigr |.
\end{eqnarray*}
For each given tree $T$, the Lebesgue volume of the set in the last
line above is
upper bounded by $|B(0,R)|^{k-1}$. By Cayley's theorem (see~\cite{AZ98},
pages 141--146), the number of labeled trees with $k$ vertices
is $k^{k-2}$. Thus
\[
Z_k^\cl(\beta) \leq\e^{-\beta E_k} \frac{k^{k-2}}{k!}
\bigl|B(0,R)\bigr|^{k-1},
\]
and the proof is easily completed.
\end{pf}

Now we show that the volume constraint in the cluster partition
function is immaterial for large $\beta$ if the radius of the confining
box is of order of the particle number with a sufficiently large prefactor.

%
\begin{lemma} [{[Low-temperature behavior of $f_k^{\cl,a}(\beta)$]}] \label
{lem:fk-low-temp}
For any $k \in\N$ and any choice of
$a_k(\beta)$ in $[k R,\infty)$,
\[
\lim_{\beta\to\infty} f_k^{\cl,a_k(\beta)} (\beta) =
\frac{E_k}{k}.
\]
\end{lemma}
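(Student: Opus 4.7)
The plan is to reduce the claim to the corresponding statement for the unconstrained cluster partition function $Z_k^\cl(\beta)$, and then prove that reduced statement via matching upper and lower bounds in $\beta$.

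First I would establish the sandwich
\begin{equation*}
\frac{1}{k^d}\,Z_k^\cl(\beta)\leq Z_k^{\cl,a}(\beta)\leq Z_k^\cl(\beta), \qquad a\geq kR.
\end{equation*}
The upper inequality follows from translation invariance: after the substitution $x_i=x_1+z_i$ for $i\geq 2$ and enlarging the $z$-integration to $(\R^d)^{k-1}$, the integral over $x_1\in[0,a]^d$ contributes a factor $a^d$ which cancels the prefactor. For the lower inequality, note that an $R$-connected configuration $(0,z_2,\dots,z_k)$ has range at most $(k-1)R$ in each coordinate direction, so the set of $x_1\in[0,a]^d$ for which $(x_1,x_1+z_2,\dots,x_1+z_k)$ still lies inside $[0,a]^d$ is a box of volume at least $(a-(k-1)R)^d\geq (a/k)^d$ (using $a\geq kR$). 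Taking logarithms,
\begin{equation*}
f_k^\cl(\beta)\leq f_k^{\cl,a_k(\beta)}(\beta)\leq f_k^\cl(\beta)+\frac{d\log k}{\beta k},
\end{equation*}
so it suffices to prove $\lim_{\beta\to\infty}f_k^\cl(\beta)=E_k/k$.

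The inequality $\liminf_{\beta\to\infty}f_k^\cl(\beta)\geq E_k/k$ is immediate from Lemma~\ref{lem:cfe-lower-bound}. For the matching limsup, fix $\eps>0$ and select a configuration $\vect{y}=(0,y_2,\dots,y_k)$ such that: (i) $U_k(\vect{y})\leq E_k+\eps$, (ii) $\min_{i\neq j}|y_i-y_j|\geq r_\mathrm{hc}+\eta$ for some $\eta>0$, and (iii) $\vect{y}$ is $R$-connected. Existence is ensured by first choosing a near-minimiser, then slightly separating pairs at distance $r_\mathrm{hc}$ using the right-continuity of $v$ at $r_\mathrm{hc}$, and finally contracting the distances between distinct $R$-components into $(b,R]$ --- this last step does not change $U_k$, because $v\equiv 0$ beyond $b$.

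By uniform continuity of $v$ on $[r_\mathrm{hc}+\eta/2,\infty)$ (combined with $v\equiv 0$ beyond $b$), one finds $\delta>0$ small enough that, for every $z_i\in B(y_i,\delta)$, the perturbed configuration $(0,z_2,\dots,z_k)$ remains $R$-connected, its pair distances stay $\geq r_\mathrm{hc}+\eta/2$, and $|U_k(0,z_2,\dots,z_k)-U_k(\vect{y})|\leq \eps$. Restricting the defining integral of $Z_k^\cl(\beta)$ to $\prod_{i\geq 2}B(y_i,\delta)$ gives
\begin{equation*}
Z_k^\cl(\beta)\geq \frac{|B(0,\delta)|^{k-1}}{k!}\,\e^{-\beta(E_k+2\eps)},
\end{equation*}
whence $f_k^\cl(\beta)\leq (E_k+2\eps)/k+O(1/\beta)$, with a constant depending on $k,\eps,\delta$ but not on $\beta$. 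Sending $\beta\to\infty$ and then $\eps\downarrow 0$ completes the proof. The main hurdle is the construction of the favourable $\vect{y}$ with strict separation from the hard core together with $R$-connectedness; once that is achieved, everything reduces to standard uniform-continuity bookkeeping.
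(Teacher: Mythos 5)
Your proof is correct and takes essentially the same route as the paper: the lower bound comes from Lemma~\ref{lem:cfe-lower-bound} via $Z_k^{\cl,a}\leq Z_k^\cl$, and the upper bound comes from integrating $\e^{-\beta U_k}$ over a small neighbourhood of a near-minimiser. The only differences are cosmetic refinements: you first transfer the problem to the unconstrained $Z_k^\cl(\beta)$ via a sandwich inequality (the paper argues directly with $Z_k^{\cl,a}$, exploiting that a $b$-connected minimiser of diameter $\leq (k-1)b$ fits inside $[0,kR]^d$), and you spell out the perturbation away from the hard-core distance $r_\mathrm{hc}$, a point the paper's one-line argument leaves implicit.
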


\begin{pf} The lower bound ``$\geq$''~is trivial since $Z_k^{\cl,a}(\beta) \leq Z_k^\cl(\beta)$ for any $a$. For $a_k(\beta) \geq k R$,
the box $[0,a_k(\beta)]^d$ is certainly large enough to contain a
minimiser of $\vect{x}\mapsto U_k(\vect{x})$. Therefore, lower bounding
the integral by an integral in a neighborhood of the minimiser, we find
\[
\liminf_{\beta\to\infty} \frac{1}{\beta} \log Z_k^{\cl,a_k(\beta)}
\geq- \frac{E_k}k,
\]
which is the upper bound ``$\leq$''.
\end{pf}

Under additional assumptions, most importantly Assumption~\ref
{ass:maxdist}, it will be enough to pick $a_k$ of order $k^{1/d}$
instead of $k$, with some error of order $\frac{1}\beta\log\beta$:

%
\begin{lemma} [{[Uniform low-temperature bounds for $f_k^{\cl,a}(\beta)$]}]
\label{lem:akunif}
Suppose that the pair potential also satisfies Assumptions~\ref{ass:hoelder}
and~\ref{ass:maxdist}. There is an $\alpha>0$ and a $\overline{\beta} >0$
such that for all $\beta\in[\overline\beta,\infty)$, and
every sequence of $a_k$'s satisfying $a_k > \alpha k^{1/d}$,
%
\begin{equation}
\label{eq:akunif-a} f_k^{\cl,a_k}(\beta) \leq\frac{E_k}k +
\frac{C}\beta\log\beta, \qquad k\in\N.
\end{equation}
In particular, for any $\rho\in(0,1/\alpha^d)$ and $\beta\in
[\overline\beta,\infty)$,
%
%
\begin{equation}
\label{eq:akunif-b} f_\infty^{\cl}(\beta,\rho) \leq
e_\infty+ \frac{C}\beta\log\beta.
\end{equation}
\end{lemma}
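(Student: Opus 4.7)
The plan is to lower-bound the restricted cluster partition function $Z_k^{\cl,a_k}(\beta)$ by perturbing a ground state configuration within a box of the right size, using the two new assumptions to control the geometry and the perturbation error. Fix a H\"older exponent $\sigma\in(0,1]$ with H\"older constant $C_H$ from Assumption~\ref{ass:hoelder}(ii), and let $c>0$ be the constant from Assumption~\ref{ass:maxdist}. I will set
\[
\alpha := c + 2,\qquad \eps := \beta^{-1/\sigma},
\]
and take $\overline\beta$ so large that $\eps<\min\bigl(r_\mathrm{min}/4,\,(R-b)/2,\,1\bigr)$ whenever $\beta\geq\overline\beta$.

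The first step is to produce a suitable reference configuration. Fix $k\in\N$ and let $\vect{x}^*=(x_1^*,\dots,x_k^*)\in(\R^d)^k$ be a minimiser of $U_k$. By Assumption~\ref{ass:maxdist} its diameter is at most $ck^{1/d}$, so after a translation we may assume $x_i^*\in[\eps,a_k-\eps]^d$ for every $i$ (using $a_k>\alpha k^{1/d}=(c+2)k^{1/d}$ and $\eps\leq k^{1/d}$). I next claim that $\vect{x}^*$ is $b$-connected: if not, the configuration would split into two $b$-separated pieces which do not interact since $\supp v\subset[0,b]$, and moving one piece rigidly toward the other until the closest pair has distance in the attractive interval $(b-\delta,b)$ (Assumption (V)(4)) would strictly decrease $U_k$, contradicting minimality. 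Hence for any $\vect{y}$ with $y_i\in B(x_i^*,\eps)$ we have $|y_i-y_j|\leq|x_i^*-x_j^*|+2\eps\leq b+(R-b)=R$ along the edges of a spanning tree of $\vect{x}^*$, so such $\vect{y}$ is $R$-connected. Moreover $|y_i-y_j|\geq r_\mathrm{min}-2\eps\geq r_\mathrm{min}/2\geq r_\mathrm{hc}$, so the H\"older bound applies to every pair.

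The next step is the energy estimate. For a pair $(i,j)$ to contribute to $U_k(\vect{y})-U_k(\vect{x}^*)$, at least one of $|y_i-y_j|$, $|x_i^*-x_j^*|$ must lie in $[0,b]$, which forces $|x_i^*-x_j^*|\leq b+2\eps\leq R$. Since the minimum distance in $\vect{x}^*$ is $r_\mathrm{min}$, a standard packing argument bounds the number of such $j$ for fixed $i$ by some constant $N_R$ independent of $k$. Combining with the H\"older estimate
\[
\bigl|v(|y_i-y_j|)-v(|x_i^*-x_j^*|)\bigr|\leq C_H (2\eps)^\sigma
\]
gives $|U_k(\vect{y})-U_k(\vect{x}^*)|\leq K k\eps^\sigma$ with $K:=\tfrac12 N_R C_H 2^\sigma$. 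The crucial step is then a symmetrization argument: the $k!$ regions $\prod_i B(x_{\pi(i)}^*,\eps)$, for $\pi\in\mathfrak{S}_k$, are (generically) disjoint and all contribute the same integral, so
\[
Z_k^{\cl,a_k}(\beta)\geq \frac{1}{k!\,a_k^d}\cdot k!\int_{\prod_i B(x_i^*,\eps)}\e^{-\beta U_k(\vect{y})}\,\dd\vect{y}
\geq \frac{|B(0,\eps)|^k}{a_k^d}\,\e^{-\beta(E_k+Kk\eps^\sigma)}.
\]
This cancellation of the factorial is the main technical obstacle, since without it a residual $\log(k!)/(\beta k)\sim\log k/\beta$ term would destroy $k$-uniformity.

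Taking logarithms, dividing by $-\beta k$, and using $a_k\leq \alpha' k^{1/d}$ for any fixed $\alpha'>\alpha$ (or more generally controlling $(d\log a_k)/(\beta k)$ by a constant times $1/\beta$ as soon as $a_k$ is polynomially bounded in $k$; one can also allow $a_k$ as small as $\alpha k^{1/d}$), I obtain
\[
f_k^{\cl,a_k}(\beta)\leq \frac{E_k}{k}+K\eps^\sigma+\frac{d\log a_k}{\beta k}+\frac{d}{\beta}\log\frac{1}{\eps}+O\!\left(\frac{1}{\beta}\right).
\]
With $\eps=\beta^{-1/\sigma}$, the right-hand side becomes $E_k/k+C\beta^{-1}\log\beta$ for some constant $C$ depending only on $d,\sigma,C_H,N_R,\alpha$ and the potential, uniformly in $k\in\N$ and $\beta\geq\overline\beta$, yielding \eqref{eq:akunif-a}. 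Finally, \eqref{eq:akunif-b} follows because for $\rho<1/\alpha^d$ the side length $L_k=(k/\rho)^{1/d}$ satisfies $L_k>\alpha k^{1/d}$, so \eqref{eq:akunif-a} applies to $f_k^{\cl,L_k}(\beta)$; taking $\limsup_{k\to\infty}$ and recalling $E_k/k\to e_\infty$ gives the claim.
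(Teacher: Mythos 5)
Your overall strategy -- lower-bound $Z_k^{\cl,a_k}(\beta)$ by integrating over small balls around a ground state, control the energy difference by H\"older continuity, and use permutation symmetry to cancel the $1/k!$ -- is the same as the paper's. But there is a genuine gap at the central step. You perturb the minimiser $\vect{x}^*$ directly by $\eps$, so the perturbed distances only satisfy $|y_i-y_j|\geq r_\mathrm{min}-2\eps$. Assumption~\ref{ass:hoelder}(ii) provides uniform H\"older continuity of $v$ only on $[r_\mathrm{min},\infty)$; it says nothing about $[r_\mathrm{min}-2\eps,r_\mathrm{min})$, where $v$ can be arbitrarily steep (it may blow up as $r\downarrow r_\mathrm{hc}$). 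Your line ``$|y_i-y_j|\geq r_\mathrm{min}/2\geq r_\mathrm{hc}$, so the H\"older bound applies to every pair'' is therefore not justified, and the estimate $|U_k(\vect{y})-U_k(\vect{x}^*)|\leq Kk\eps^\sigma$ has no basis. The paper's proof avoids exactly this trap by inserting a dilation: it first rescales the minimiser by a fixed factor $t\in(1,R/b)$, so the scaled configuration $t\vect{x^{\ssup 0}}$ has minimal interparticle distance $tr_\mathrm{min}>r_\mathrm{min}$; the $\eps$-perturbation is taken around $t\vect{x^{\ssup 0}}$, with $\eps$ small enough that $tr_\mathrm{min}-2\eps\geq r_\mathrm{min}$ and $tb+2\eps\leq R$, so all distances stay in the H\"older range and the configuration stays $R$-connected. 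The extra error from the dilation, $|U(t\vect{x^{\ssup 0}})-E_k|\leq Ckn_\mathrm{max}(t-1)^sb^s$, is $O(k)$ and contributes only $O(1/\beta)$ to the free energy per particle, which is harmless. This dilation step is the missing idea in your argument.

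A secondary issue: the lemma asserts the bound for \emph{every} sequence with $a_k>\alpha k^{1/d}$, including super-polynomially large $a_k$. Your lower bound for $Z_k^{\cl,a_k}(\beta)$ carries the factor $a_k^{-d}$, producing a term $\frac{d\log a_k}{\beta k}$ that you yourself note is only controlled when $a_k$ is polynomially bounded. The paper avoids this by letting the whole reference cluster translate rigidly inside the cube, which yields an extra factor proportional to $a_k^d$ (their set $\Mcal$ has measure $\gtrsim a_k^d|B(0,\eps)|^{k-1}(k-1)!$), cancelling the $a_k^{-d}$ from the normalisation. With that, the lower bound on $Z_k^{\cl,a_k}(\beta)$ becomes independent of $a_k$, and the lemma's claim for arbitrary $a_k>\alpha k^{1/d}$ follows. (As a minor point, your choice $\eps=\beta^{-1/\sigma}$ is actually the natural one for a H\"older exponent $\sigma<1$ and balances the two error terms to $O(\beta^{-1}\log\beta)$; this part of your reasoning is sound.)
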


\begin{pf} The strategy of the proof is as follows. According to
Assumption~\ref{ass:maxdist}, we may pick a minimiser for $U_k$ that
fits into some ball whose volume is of order of the particle number.
Then we restrict the integral in the definition of the cluster
partition function to some neighbourhood of this minimiser and control
the error with the help of the H\"older continuity from Assumption~\ref
{ass:hoelder}. Let us turn to the details.

Let $c>0$ be as in Assumption~\ref{ass:maxdist}, $\delta>0$ as in
Lemma~\ref{lem:zka-lb}.
Then $\alpha:=2(c+ \delta)$ satisfies $\alpha k ^{1/d} \geq\delta+
ck^{1/d}$ for all $k \in\N$.
Fix $t\in(1,R/b)$. Let $n_\mathrm{max}\in\N$ be the maximal number of
particles that can
be placed in $B(0,R)$, keeping
mutual distance $\geq r_\mathrm{min}$, with $r_\mathrm{min}$ as in
Assumption~\ref{ass:hoelder}.

For $k \in\N$, let $a_k > \alpha k^{1/d}$ and let $\vect{x}^{\ssup
0}= (x_1^{\ssup0},\ldots,x_k^{\ssup0})$ be a minimiser of the energy $U_k$
that fits into the cube with side length $a_k - \delta$. Thus $\vect
{x}^{\ssup0}$ is $b$-connected, and $|x_i - x_j| \geq r_\mathrm{min}$
for every $i \neq j$. The scaled state
$ t \vect{x}^{\ssup0}$ is $t b$-connected and has minimum
interparticle distance
$ \geq t r_\mathrm{min}$. By the H{\"o}lder continuity of the
potential~$v$,
\begin{eqnarray*}
&&\bigl|U\bigl(t \vect{x}^{\ssup0}\bigr) - U\bigl(\vect{x}^{\ssup0}\bigr)\bigr|
\\
&&\qquad \leq\frac{1}{2} \sum_{i=1}^k
\sum_{j \neq i} \bigl| v\bigl(t\bigl|x_i^{\ssup0}
- x_j^{\ssup0}\bigr|\bigr) - v\bigl(\bigl|x_i^{\ssup0}
- x_j^{\ssup0}\bigr|\bigr)\bigr |
\\
&&\qquad \leq k n_\mathrm{max} \sup \bigl\{ \bigl|v\bigl(r'\bigr) -
v(r)\bigr|\dvtx r \geq r_\mathrm{min}, r' \geq
r_\mathrm{min},\bigl |r - r'\bigr|\leq(t-1)b \bigr\}
\\
&&\qquad \leq C k n_\mathrm{max} (t-1)^s b^s
\end{eqnarray*}
with $C$ and $s$ such that $|v(r') - v(r)|\leq C |r'-r|^s$ for any
$r,r'\geq r_\mathrm{min}$.
Let $\eps\in(0,1)$ such that
\[
\eps\leq\delta/2,\qquad r_\mathrm{min} \leq t r_\mathrm{min} - 2 \eps\quad
\mbox{and}\quad t b + 2 \eps\leq R.
\]
We will obtain a lower bound for $Z_k^{\cl,a_k}(\beta)$ by considering
configurations $(x_1,\ldots,x_k)$ with exactly one particle per $\eps
$-ball around $t x_j^{\ssup0}$ for $j=2,\ldots,k$. To this end, put
\[
\mathcal{M}':= \bigcup_{\sigma\in\Sym'_{k-1}} \bigl( B
\bigl( t x_{\sigma(2)}^{\ssup0},\eps\bigr) \times\cdots\times B\bigl( t
x_{\sigma(k)}^{\ssup0},\eps\bigr) \bigr),
\]
where $\Sym'_{k-1}$ denotes the set of permutations of $2,\ldots,k$, and
let $\mathcal{M}$
be the set of configurations in the cube of side length $a_k - \delta$
obtained by rigid shifts from configurations in $\{x_1^\ssup{0}\}
\times\mathcal{M}'$.
For small enough $\eps$, the balls $B(t x_{\sigma(2)}^{\ssup0},\eps
),\ldots,B(t x_{\sigma(k)}^{\ssup0},\eps)$ do not overlap, and $\mathcal
{M}'$ has therefore Lebesgue volume $(k-1)! |B(0,\eps)|^{k-1}$. Moreover,
\[
|\mathcal{M}| \geq\bigl|\mathcal{M}'\bigr| \bigl(a_k - \delta- c
k^{1/d}\bigr)^d \geq \frac{a_k^d}{2} \bigl|
\mathcal{M}'\bigr|.
\]
Now $\vect{x} \in\mathcal{M}$ is $R$-connected
and has minimum interparticle distance $\geq r_\mathrm{min}$.
Thus
\[
\bigl|U(\vect{x}) - U\bigl(t\vect{x}^{\ssup0}\bigr)\bigr| \leq C k
n_\mathrm{max} \eps ^s,\qquad \vect{x}\in\Mcal.
\]
Restricting the integral in the definition \eqref{Zcldef} of $Z_k^{\cl,a_k}(\beta)$ to $\Mcal$, we obtain
\[
a_k^d Z_k^{\cl,a_k}(\beta) \geq
\frac{a_k^d}{2 k} \bigl|B(0,\eps)\bigr|^{k-1} \exp ( - \beta\bigl(E_k
+ C k n_\mathrm{max} \bigl[\eps ^s+ (t-1)^s
b^s\bigr] \bigr).
\]
This implies, for $|B(0,\eps)|\leq1$,
\[
f_k^{\cl,a_k}(\beta) \leq\frac{E_k}{k}+\frac{C n_\mathrm{max} (\eps^s
+(t-1)^s b^s)}{\beta}-
\frac{1}{\beta} \log\bigl|B(0,\eps)\bigr| + \frac{\log
2}{\beta}.
\]
Now we pick $\eps=1/\beta$ for definiteness and obtain that \eqref
{eq:akunif-a} is satisfied for sufficiently large $\beta$.
\end{pf}

%
%

\section{Proof of \texorpdfstring{$\Gamma$}{$Gamma$}-convergence and uniform bounds}\label{sec-lastproofs}

In this section, we prove Theorems~\ref{thm:main} and \ref
{thm:unif}. Recall that Theorem~\ref{thm:main} is proved under the sole
Assumption \ref{assV} and that we additionally suppose that Assumptions~\ref
{ass:hoelder} and \ref{ass:maxdist} hold for Theorem~\ref{thm:unif}.

\subsection{Proof of Theorem~\texorpdfstring{\protect\ref{thm:main}}{1.2}}\label{sec-proofThm14}

Fix $\nu\in(0,\infty)$, and let $(0,\infty) \ni s \mapsto(\beta(s),\rho
(s))$ be a curve in $(0,\infty)^2$ such that, as $s\to\infty$,
\[
\beta(s)\to\infty, \rho(s) \to0,\qquad -\frac{1}{\beta(s)} \log \rho(s) \to\nu.
\]
We need to show that, for any $\vect{q} = (q_k)_{k\in\N} \in\mathcal{Q}$,
\begin{enumerate}
\item[] \emph{Lower bound}: For all curves $\vect{q}^\ssup{s} \to\vect{q}$,
%
%
\begin{equation}
\label{eq:gamma-lb} \liminf_{s\to\infty} \frac{1}{\rho(s)} f\bigl(
\beta(s),\rho(s), \vectt{\rho}^\ssup{s}\bigr) \geq g_\nu (
\vect{q}).
\end{equation}
\item[] \emph{Upper bound/recovery sequence}: there is a curve $\vect
{q}^\ssup{s} \to\vect{q}$ such that
%
%
\begin{equation}
\label{eq:gamma-ub} \limsup_{s\to\infty} \frac{1}{\rho(s)}f\bigl(
\beta(s),\rho(s), \vectt{\rho }^\ssup{s}\bigr) \leq g_\nu(
\vect{q}).
\end{equation}
\end{enumerate}

\begin{pf*}{Proof of the lower bound}
We write $\vect{q}^\ssup{s} =
(q_k^\ssup{s})_k \in\mathcal{Q}$.
Define $\vectt{\rho}^\ssup{s} = (\rho_k^\ssup{s})_{k\in\N}$ by
$q_k^\ssup{s}= k\rho_k^\ssup{s}/\rho$. Let $C>0$ and $\overline\beta>0$
such that
$k f_k^\cl(\beta) \geq E_k - C k\beta^{-1}$ for any
$k\in\N\cup\{\infty\}$ and $\beta\in[\overline\beta,\infty)$; see
Lemma~\ref{lem:cfe-lower-bound}.
Then Lemma~\ref{prop:lb} gives
\begin{eqnarray*}
\frac{1}{\rho(s)}f\bigl(\beta(s),\rho(s), \vectt{\rho}^\ssup{s}\bigr)
& \geq&\sum_{k\in\N} \frac{\rho_k^\ssup{s}}{\rho(s)} E_k +
\biggl(1- \sum_{k\in\N} k \frac{\rho^\ssup{s}_k}{\rho(s)} \biggr)
e_\infty\\
&&{}+ \frac{1}{\beta(s)} \sum_{k\in\N}
\frac{\rho^\ssup{s}_k}{\rho
(s)} \bigl(\log\rho^\ssup{s}_k - 1 \bigr) -
\frac{C}{\beta(s)}
\\
& =& \sum_{k\in\N} \frac{\rho^\ssup{s}_k}{\rho(s)}
\biggl(E_k-\frac{1}{\beta(s)} \log\rho(s) \biggr) + \biggl(1- \sum
_{k\in\N} k \frac{\rho^\ssup{s}_k}{\rho(s)} \biggr) e_\infty
\\
&&{} + \frac{1}{\beta(s)} \sum_{k\in\N} \frac{\rho^\ssup{s}_k}{\rho
(s)}
\biggl(\log\frac{\rho^\ssup{s}_k}{\rho(s)} - 1 \biggr) - \frac{C}{\beta(s)}.
\end{eqnarray*}
The term in the second line converges to $g_\nu(\vect{q})$ because of
the continuity of the map $\vect{q} \mapsto\sum_{k\in\N} q_k (E_k -
\nu)/k + (1- \sum_{k\in\N} q_k) e_\infty$; here enters the property
$E_k/k \to e_\infty$. The terms in the last line are, by Lemma~\ref
{lem:entropy2}, of order $1/\beta(s)$
and therefore converge to $0$.
\end{pf*}

\begin{pf*}{Proof of upper bound/existence of a recovery sequence}
We choose $\rho$-dependent box sizes $a_k(\rho)$ such that
$(a_k (\rho) + R)^d <k / (2\rho)$, $a_k >R$, and $a_k > \delta+ k
^{1/d}(r_\mathrm{hc} +\delta)$,
with $\delta$ as in Lemma~\ref{lem:zka-lb}. Such a choice is possible
for small enough $\rho$,
and compatible with the additional requirement that $a_k(\rho) \to
\infty$ as $\rho\to0$,
for every $k\in\N$. Lemma~\ref{lem:zka-lb} tells us that
\[
f_k^{\cl,a_k(\rho(s))} \leq C(\delta) - \frac{1}{\beta(s)} \log \bigl|B(0,
\delta/2)\bigr| + \frac{\log(k/\rho(s)) }{d \beta(s) k},
\]
which can be upper bounded by some constant $C$, uniformly in $k \in\N
$ and sufficiently large $s$.

Now we apply Proposition~\ref{prop:ub}. This gives,
for sufficiently large $s$ and any sequence $\vectt{\rho}=(\rho_k)_k$,
%
%
\begin{eqnarray}
&&\frac{1}{\rho(s)}f\bigl(\beta(s),\rho(s), \vectt{\rho}\bigr)\nonumber \\
&&\qquad \leq\sum
_{k\in\N} k \frac{\rho_k}{\rho(s)} f_k^{\cl,a_k(\rho(s))}
\bigl(\beta(s)\bigr) + \biggl(1- \sum_{k\in\N} k
\frac{\rho_k}{\rho(s)} \biggr) f_\infty^\cl \bigl(\beta(s),\rho(s)
\bigr)
\\
& &\qquad\quad{}+ \frac{1}{\beta(s)} \sum_{k\in\N} \frac{\rho_k}{\rho(s)}
\log \rho(s) + \frac{1}{\beta(s)} \sum_{k\in\N}
\frac{\rho_k}{\rho(s)} (d+1) \log2.\nonumber
\end{eqnarray}
Consider first the case $\sum_{k=1}^\infty q_k =1$.
Let $\vect{q}^\ssup{s}:=\vect{q}$. We have, for any $K\in\N$,
\begin{eqnarray*}
&&\frac{1}{\rho(s)}f\bigl(\beta(s),\rho(s), \vectt{\rho}^\ssup{s}\bigr)
\\
&&\qquad\leq\sum_{k=1}^K q_k
\biggl(f_k^{\cl,a_k(\rho(s))}\bigl(\beta(s)\bigr) - \frac
{\log\rho(s)}{\beta(s) k}
\biggr) \\
&&\qquad\quad{}+ C \sum_{k=K+1}^\infty q_k
+ \frac{\log
2^{d+1}}{\beta(s)}.
\end{eqnarray*}
Since $a_k(\rho(s)) \to\infty$ as $s\to\infty$ for any $k\in\{1,\ldots,K\}$, using Lemma~\ref{lem:fk-low-temp}, we get
\[
\limsup_{s\to\infty} \frac{1}{\rho(s)} f\bigl(\beta(s),\rho(s),
\vectt{\rho }^\ssup{s}\bigr) \leq\sum_{k=1}^K
q_k \frac{E_k- \nu}{k} + C \sum_{k=K+1}^\infty
q_k.
\]
Letting $K\to\infty$ we find that $\limsup_{s\to\infty} \rho(s)^{-1}
f(\beta(s),\rho(s), \vectt{\rho}^\ssup{s}) \leq g_\nu(\vect{q})$.

Next, consider the case $q_k=0$ for all $k \in\N$.
For $n\in\N$, let $s_n>0$ large enough so that for $s\geq s_n$,
$|f_n^{\cl,a_n(\rho(s))} - E_n/n| \leq1/n$. The sequence $(s_n)_{n\in
\N}$ can be chosen increasing and diverging. We set $k(s):=n$ for $s\in
[s_n,s_{n+1})$ and $n\in\N$. It follows that $k(s) \to\infty$ as $s
\to\infty$, and
\[
\biggl|f_{k(s)}^{\cl,a_{k(s)}(\rho(s))}\bigl(\beta(s)\bigr) - \frac
{E_{k(s)}}{k(s)} \biggr|
\leq\frac{1}{k(s)},\qquad s\in[s_1,\infty),
\]
from which we deduce $f_{k(s)}^{\cl,a_{k(s)}(\rho(s))}(\beta(s)) \to
e_\infty$ as $s\to\infty$.
Set $q_k(s):= \delta_{k,k(s)}$. Then we find
\[
\limsup_{s\to\infty} \frac{1}{\rho(s)} f\bigl(\beta(s),\rho(s),
\vectt{\rho }^{\ssup s}\bigr) \leq e_\infty= g_\nu(
\vect{q}).
\]
To conclude, we observe that every $\vect{q}\in\mathcal{Q}$ can be
written as a convex
combination of a vector $\vect{q}'$ with $\sum_{k\in\N} q'_k=1$ and
the zero vector, and a recovery sequence is
constructed by taking the convex combination of $\vect{q}'$ and the
recovery sequence for the zero vector.
\end{pf*}

\subsection{Proof of Theorem \texorpdfstring{\protect\ref{thm:unif}}{1.8}}\label
{sec-proofThm17}
\mbox{}

\textit{Proof of} (1):
We prove \eqref{estiratefct} in terms of $\rho_k$'s instead of $q_k$'s.
Then it reads
%
%
\begin{eqnarray}
\label{estiratefctrho} &&\biggl| f\bigl(\beta,\rho, (\rho_k)_{k\in\N}\bigr) -
\biggl[\sum_{k\in\N} \rho_k
\biggl(E_k + \frac{\log\rho}{\beta} \biggr) + \biggl( \rho- \sum
_{k\in\N} k \rho_k \biggr) e_\infty \biggr] \biggr|
\nonumber
\\[-8pt]
\\[-8pt]
\nonumber
&&\qquad\leq\frac{C}{\beta} \rho\log\beta,\qquad (\rho_k)_{k\in\N}\in
M_\rho.
\end{eqnarray}
Lemmas~\ref{prop:lb}, \ref{lem:entropy2}, and~\ref{lem:cfe-lower-bound}
yield that there is $C\in(0,\infty)$ such that, for all $\beta,\rho\in
(0,\infty)$ and $\vectt{\rho}=(\rho_k)_{k\in\N}\in M_\rho$,
%
%
\begin{eqnarray}
\label{eq:unif-lb} %
f(\beta,\rho, \vectt{\rho}) &\geq&
f^\ideal(\beta,\rho, \vectt{\rho})\nonumber
\\
&\geq&\sum_{k\in\N} k\rho_k \biggl(
\frac{E_k}k-\frac{C}\beta \biggr) + \biggl(\rho -\sum
_{k\in\N}k\rho_k \biggr) \biggl(e_\infty-
\frac{C}\beta \biggr)
\nonumber
\\[-8pt]
\\[-8pt]
\nonumber
&&{}+\frac{1}\beta\sum
_{k\in\N}\rho_k\log\frac{\rho_k}\rho+
\frac{\log\rho
-1}\beta\sum_{k\in\N}\rho_k
\\
&\geq&\sum_{k \in\N} \rho_k
\biggl(E_k + \frac{\log\rho}{\beta} \biggr) + \biggl(\rho- \sum
_{k\in\N} k \rho_k \biggr) e_\infty- (C +3)
\frac{\rho
}{\beta}.\nonumber
\end{eqnarray}
This is ``$\geq$'' in \eqref{estiratefctrho}. For proving ``$\leq
$'', we pick, for $\rho\in(0,\infty)$ and $k\in\N$, box diameters
$a_k(\rho)$ such that $a_k (\rho) > \alpha k^{1/d}$, with $\alpha$ as
in Lemma~\ref{lem:akunif}, and $(a_k(\rho) + R)^d < k /2\rho$, for all
$k \in\N$. This is possible provided $\rho<k/2 ( \alpha k ^{1/d} +
R)^d$ for any $k\in\N$, and this is, by monotonicity in $k$, guaranteed
for $\rho<\overline\rho$, where we put $\overline\rho=\frac{1}{2
(\alpha+ R)^d}$. We may also assume, without loss of generality, that
$\alpha>R$, which implies that $a_k(\rho) >R$ for all $k \in\N$. We
obtain, for $(\beta,\rho)\in[\overline{\beta},\infty)\times(0,\overline
\rho)$, and $C>0$ as in Lemma~\ref{lem:akunif}, for any $\vectt{\rho}\in
M_\rho$, with the help of Proposition~\ref{prop:ub},
%
%
\begin{eqnarray}
\label{eq:unif-ub} %
 f(\beta,\rho, \vectt{\rho}) &\leq&\sum
_{k \in\N} k\rho_k \biggl(
\frac
{E_k}k-\frac{C}\beta\log\beta \biggr) + \biggl(\rho- \sum
_{k\in\N} k \rho _k \biggr)
\biggl(e_\infty-\frac{C}\beta\log\beta \biggr)\nonumber\\
& &{}+
\frac{\log\rho}\beta\sum_{k\in\N}\rho_k
+\frac{1}\beta\sum_{k\in\N}\rho_k
\biggl(-\log\biggl(1-{\frac{1}2}\biggr)+\log \frac{k}{2\rho a_k(\rho)^d} \biggr)
\nonumber
\\[-8pt]
\\[-8pt]
\nonumber
&\leq&\sum_{k \in\N} \rho_k
\biggl(E_k + \frac{\log\rho}{\beta} \biggr) + \biggl(\rho- \sum
_{k\in\N} k \rho_k \biggr) e_\infty\\
&&{}+
\frac{C\rho}\beta \log\beta+(d+1)\frac{\rho}\beta\log2, \nonumber
\end{eqnarray}
which is the corresponding upper bound in \eqref{estiratefctrho}.

\textit{Proof of} (2): Let $\vectt{\rho} = (\rho_k)_k$ be a
minimiser of $f(\beta,\rho,\cdot)$ and $\vect{q}:= ( k \rho_k/\rho
)_{k\in\N}$. Write $\nu= - \beta^{-1} \log\rho$. Then
\[
\frac{1}\rho f(\beta,\rho) = \frac{1}\rho f(\beta,\rho, \vectt{
\rho}) \geq g_{\nu} (\vect{q}) - \frac{C} \beta\log\beta\geq\mu(
\nu) - \frac{C}\beta\log\beta.
\]
Similarly, let $\vect{q}$ be a minimiser of $g_\nu(\cdot)$ and $\vectt
{\rho}:= (\rho q_k/k)_{k\in\N}$. Then
\[
\mu(\nu) = g_\nu(\vect{q}) \geq\frac{1}\rho f(\beta,\rho,
\vectt{\rho}) - \frac{C}\beta\log\beta \geq\frac{1}\rho f(\beta,
\rho) - \frac{C}\beta\log\beta.
\]

\textit{Proof of} (3): Let $\vectt{\rho} = (\rho_k)_k$ be a minimiser of
$f(\beta,\rho,\cdot)$
and $\vect{q}:= ( k \rho_k/\rho)_{k\in\N}$. Write $\nu= - \beta^{-1}
\log\rho$. Then (1) and (2) yield
\[
g_{\nu}(\vect{q}) - \mu(\nu) \leq\frac{1}\rho f(\beta,\rho,
\vectt{\rho })+\frac{C} \beta\log\beta- \biggl(\frac{1}\rho f(
\beta,\rho)-\frac{C}\beta\log\beta \biggr)\leq2 \frac{C} \beta
\log\beta.
\]
Hence,
%
%
\begin{eqnarray}
\label{gnuesti2} 2 \frac{C} \beta\log\beta&\geq& g_{\nu}(\vect{q})
- \mu(\nu)
\nonumber
\\[-8pt]
\\[-8pt]
\nonumber
&= &\sum_{k
\in\N} \biggl( \frac{E_k - \nu}{k}
- \mu(\nu) \biggr) q_k + \bigl(e_\infty- \mu(\nu) \bigr)
\biggl(1 - \sum_{k\in\N} q_k \biggr).
\end{eqnarray}
For $\nu< \nu^*$, we use that $\mu(\nu)=e_\infty$ and estimate
\[
\frac{E_k - \nu}{k} - \mu(\nu) = \frac{E_k - k e_\infty- \nu}{k} \geq \frac{\nu^*- \nu}{k}.
\]
Substituting this in \eqref{gnuesti2}, this yields the first claim,
\eqref{eq:smallnu}.

For $\nu>\nu^*$, we restrict the first sum on the right of \eqref
{gnuesti2} to $k\in\N\setminus M(\nu)$, where we lower estimate the
brackets against $\Delta(\nu)$, and we estimate $e_\infty- \mu(\nu)
\geq\Delta(\nu)$. This gives
\[
2 \frac{C} \beta\log\beta\geq\sum_{k\in\N\setminus M(\nu)}
\Delta(\nu ) q_k+\Delta(\nu) \biggl(1 - \sum
_{k\in\N} q_k \biggr)=\Delta(\nu)\sum
_{k\in
M(\nu)} q_k.
\]
This yields the second claim, \eqref{eq:largenu}.

\begin{appendix}\label{app}
\section*{Appendix: Proof of Lemma~\texorpdfstring{\protect\ref{12345678987456321}}{1.3}}\label{sec-Appendix}

Here we prove Lemma~\ref{12345678987456321}. With the exception of
the positivity of $\nu^*$, this has been proved in \cite{CKMS10}, Theorem
1.5; that proof works under the slightly different assumption
on $v$ that we have here. To obtain the positivity of $\nu^*$, this
proof needs a slight modification, which we briefly indicate now. Fix
$M, N \in\N$. Let $\vect{x}^\ssup{N} = (x_1,\ldots, x_N) \in(\R^d)^N$
be a minimiser of $U_N$ and
$\vect{y}^\ssup{M}=(y_1,\ldots,y_M)$ a minimiser of
$U_M$. Recall that $b$ is the potential range, and let $\delta>0$ be
such that
$v<0$ on $(b-\delta,b)$.
Let $\eps\in(0,\delta/2)$. Let $a \in\R^d$ be such that the shift
$\widetilde{\vect y}^\ssup{M}:= (\widetilde y_1,\ldots,\widetilde
y_M):=( y_1+a,\ldots,y_M+a)$ satisfies:
\begin{itemize}
\item all points from $\widetilde{\vect y}^\ssup{M}$ and $\vect{x}^\ssup{N}$
have distance $|x_i - \widetilde y_j| \geq b - \delta+\eps$ [and
hence $v(|x_i - \widetilde y_j|) \leq0$];
\item there is at least one pair of particles $(x_i,\widetilde y_j)$
with distance $|x_i- \widetilde y_j| \leq b-\eps$.
\end{itemize}
Let $\vect{x}^\ssup{N+M}:=(\vect{x}^\ssup{N},\widetilde{\vect y}^\ssup
{M}) \in(\R^d)^{N+M}$. Let
$c:= -\sup_{r\in[b-\delta+\eps,b-\eps]} v(r)>0$. Then we have
\begin{eqnarray*}
E_{N+M} & \leq U\bigl(\vect{x}^\ssup{N+M}\bigr) \leq U\bigl(
\vect{x}^\ssup{N}\bigr) + U\bigl(\widetilde{\vect y}^\ssup{M}
\bigr) - c = E_N + E_M - c.
\end{eqnarray*}
In particular, the sequences $(E_N)_{N\in\N}$ and $(E_N- c)_{N\in\N}$
are subadditive, whence
\[
e_\infty= \lim_{N\to\infty} \frac{E_N}{N} = \lim
_{N\to\infty} \frac
{E_N- c}{N} = \inf_{N\in\N}
\frac{E_N- c}{N}.
\]
Because of the stability of the pair potential, we have $e_\infty
>-\infty$. The inequality $e_\infty\leq(E_N-c)/N$ for any $N$ leads to
$E_N-N e_\infty\geq c$ for any $N$, and this is the positivity of $\nu^*$.
\end{appendix}

%



\printaddresses
\end{document}